\numberwithin{equation}{section}
\setlist[enumerate]{label={\upshape(\roman*)}}
\newcommand	{\N}{\mathbb{N}}
\newcommand	{\R}{\mathbb{R}}
\newcommand	{\E}{\mathbf{E}}
\newcommand	{\PP}{\mathbf{P}} 
\newcommand {\V}{\mathbf{Var}}
\newcommand {\e}{\varepsilon}
\newcommand {\al}{\alpha}
\newcommand {\OO}{\mathcal{O}}
\newcommand {\oo}{{\scriptscriptstyle\mathcal{O}}}
\newcommand {\eq}{\ = \ }
\newcommand {\loe}{\ \leq \ }
\newcommand {\goe}{\ \geq \ }
\newtheorem{definition}{Definition}[section]
\newtheorem{lem}[definition]{Lemma}
\newtheorem{cor}[definition]{Corollary}
\newtheorem{prop}[definition]{Proposition}
\newtheorem*{ex}{Examples}
\newtheorem{theorem}[definition]{Theorem}
\DeclareFontFamily{U}{mathx}{\hyphenchar\font45}
\DeclareFontShape{U}{mathx}{m}{n}{<-> mathx10}{}
\DeclareSymbolFont{mathx}{U}{mathx}{m}{n}
\DeclareMathAccent{\widecheck}{9}{mathx}{"71}
\DeclareFontFamily{U}{mathx}{\hyphenchar\font45}
\DeclareFontShape{U}{mathx}{m}{n}{<-> mathx10}{}
\DeclareSymbolFont{mathx}{U}{mathx}{m}{n}
\DeclareMathAccent{\widebar}{0}{mathx}{"73}
\begin{document}

\title{External branch lengths of $\Lambda$-coalescents\\
without a dust component} 
\author{\noindent Christina S. Diehl\thanks{Institut f\"ur Mathematik, Goethe-Universit\"at, 60054 Frankfurt am Main, Germany \newline diehl@math.uni-frankfurt.de, kersting@math.uni-frankfurt.de \newline Work partially supported by the DFG Priority Programme SPP 1590 ``Probabilistic Structures in Evolution''} $\ $ and G\"otz Kersting$^*$}

\maketitle

\begin{abstract}
\noindent
$\Lambda$-coalescents model genealogies of samples of individuals from a large population by means of a family tree whose branches have lengths. The tree's leaves represent the individuals, and the lengths of the adjacent edges indicate the individuals' time durations up to some common ancestor. These edges are called external branches. 
We consider typical external branches under the broad assumption that the coalescent has no dust component, and maximal external branches under further regularity assumptions. As it transpires, the crucial characteristic is the coalescent's rate of decrease $\mu(b)$, $b\geq 2$. 
The magnitude of a typical external branch is asymptotically given by $n/\mu(n)$, where $n$ denotes the sample size. This result, in addition to the asymptotic independence of several typical external lengths hold in full generality, while convergence in distribution of the scaled external lengths requires that $\mu(n)$ is regularly varying at infinity. 
For the maximal lengths, we distinguish two cases. Firstly, we analyze a class of $\Lambda$-coalescents coming down from infinity and with regularly varying $\mu$. Here the scaled external lengths behave as the maximal values of $n$ i.i.d. random variables, and their limit is captured by a Poisson point process on the positive real line. Secondly, we turn to the Bolthausen-Sznitman coalescent, where the picture changes. Now the limiting behavior of the normalized external lengths is given by a Cox point process, which can be expressed by a randomly shifted Poisson point process.   

\bigskip
\noindent\emph{AMS 2010 subject classification: 60J75 (primary), 60F05, 60J27, 92D25$^{\color{white} \big|}$}\\
\noindent\emph{Keywords:} $\Lambda$-coalescent, dustless coalescent, Bolthausen-Sznitman coalescent, Beta-coales\-cent, Kingman's coalescent, external branch lengths, Poisson point process, Cox point process, weak limit law  
\end{abstract}

\bigskip

\section{Introduction and main results}

In population genetics, family trees stemming from a sample out of a big population are modeled by coalescents. The prominent Kingman coalescent \cite{King82} found widespread applications in biology. More recently, the Bolthausen-Sznitman coalescent, originating from statistical mechanics \cite{BS98}, has gained in importance in analyzing genealogies of populations undergoing selection \cite{BDMM07,DWF13,NH13,Schw17}. Unlike Kingman's coalescent, the Bolthausen-Sznitman coalescent allows multiple mergers. 
The larger class of Beta-coalescents has found increasing interest, e.g., in the study of marine species \cite{SBB13, NNY16}. All these instances are covered by the notion of $\Lambda$-coalescents as introduced by Pitman \cite{Pit99} and Sagitov \cite{Sag99} in 1999. Today, general properties of this extensive class have become more transparent \cite{KSW18,DK18}.  \\
In this paper, we deal with the lengths of external branches of $\Lambda$-coalescents under the broad assumption that the coalescent has no dust component, which applies to all the cases mentioned above. We shall treat external branches of typical and, under additional regularity assumptions, of maximal length. For the total external length, see the publications \cite{Moe10,JK11,DKW14,KPS-J14,DY15}.  

\medskip

$\Lambda$-coalescents are Markov processes $(\Pi(t),\,t\geq 0)$ taking values in the set of partitions of $\N$, where $\Lambda$ denotes a non-vanishing finite measure on the unit interval $[0,1]$.
Its restrictions $(\Pi_n(t),\,t\geq 0)$ to the sets $\{1,\ldots,n\}$ are called $n$-coalescents. They are continuous-time Markov chains characterized by the following dynamics:
Given the event that $\Pi_n(t)$ is a partition consisting of $b\geq 2$ blocks, $k$ specified blocks merge at rate
\[\lambda_{b,k}\ :=\ \int_{[0,1]}p^k(1-p)^{b-k}\frac{\Lambda(dp)}{p^2}, \qquad 2\leq k\leq b, \]
to a single one. 
In this paper, the crucial characteristic of $\Lambda$-coalescents is 
the sequence $\mu=(\mu(b))_{b\geq 2}$ defined as 
\[\mu(b) \ :=\ \sum_{k=2}^b(k-1)\binom{b}{k}\lambda_{b,k}, \qquad b\geq 2.\]
We call this quantity the rate of decrease as it is the rate at which the number of blocks is decreasing on average. Note that a merger of $k$ blocks corresponds to a decline of $k-1$ blocks. 
The importance of $\mu$ also became apparent from other publications \cite{Schw00,LS06,DK18}. In particular, the assumption of absence of a dust component may be expressed in this term. Originally characterized by the condition
\[\int_{[0,1]}\frac{\Lambda(dp)}{p} \eq \infty,\]
(see \cite{Pit99}), 
 \begin{samepage}\enlargethispage{2\baselineskip}
it can be equivalently specified by the requirement
\[\frac{\mu(n)}{n} \ \rightarrow \ \infty\]
as $n\to\infty$  (see Lemma 1 (iii) of \cite{DK18}).
  \end{samepage}  

\medskip

An $n$-coalescent can be thought of as a random rooted tree with $n$ labeled leaves representing the individuals of a sample. 
Its branches specify ancestral lineages of the individuals or their ancestors.  The branch lengths give the time spans until the occurrence of new common ancestors. 
Branches ending in a leaf are called external branches. 
If mutations under the infinite sites model \cite{Kim69} are added in these considerations, the importance of external branches is revealed. This is due to the fact that mutations on external branches only affect a single individual of the sample. Longer external branches result, thereby, in an excess of singleton polymorphisms \cite{WNL-CA01} and are  known to be a characteristic for trees with multiple mergers \cite{EBBF15}; e.g., external branch lengths have been used to discriminate between different coalescents in the context of HIV trees \cite{WBWF16} (see also \cite{VLBRS18}).  Of course such considerations have rather theoretical value as long as singleton polymorphisms cannot be distinguished from sequencing errors. 

\medskip

Now we turn to the main results of this paper. For $1\leq i\leq n$, the length of the external branch ending in leaf $i$ within an $n$-coalescent is defined as 
\[T_i^n\ :=\ \inf{\left\{t\geq 0:\;\left\{i\right\}\notin\Pi_{n}(t)\right\}}.\]
In the first theorem, we consider the length $T^n$ of a randomly chosen external branch. Based on the exchangeability, $T^n$ is equal in distribution to $T_i^n$ for $1\leq i\leq n$.  
The result clarifies the magnitude of $T^n$ in full generality. 
 
\begin{theorem}
\label{dustless}
For a $\Lambda$-coalescent without a dust component, we have for $t\geq 0$, 
\[e^{-2t}+\oo(1)\ \leq \ \PP\left(\frac{\mu(n)}{n}\ T^{n}> t\right)\ \leq \ \frac{1}{1+t}+\oo(1)\]
as $n\to\infty$.
\end{theorem}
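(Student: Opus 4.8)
The plan is to follow the fate of the external branch ending in leaf $1$ (so $T^n\stackrel{d}{=}T^n_1$) by controlling the rate at which the singleton block $\{1\}$ is swallowed by a merger. When the current partition has $b$ blocks, one of them being $\{1\}$, this rate is
\[ r_b\ :=\ \sum_{k=2}^{b}\binom{b-1}{k-1}\lambda_{b,k}. \]
Using $\binom{b-1}{k-1}=\tfrac{k}{b}\binom{b}{k}$ and writing $\lambda_b:=\sum_{k=2}^{b}\binom{b}{k}\lambda_{b,k}$ for the total merger rate, one gets $r_b=\tfrac{1}{b}\big(\mu(b)+\lambda_b\big)$, and since $0\le\lambda_b\le\mu(b)$ this gives the two-sided bound
\[ \frac{\mu(b)}{b}\ \loe\ r_b\ \loe\ \frac{2\mu(b)}{b}. \]
These inequalities are exactly what will produce the constants $1$ and $2$ appearing in the statement.

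Everything rests on two monotonicity facts: $b\mapsto\mu(b)/b$ is nondecreasing, and $b\mapsto\mu(b)/\binom{b}{2}$ is nonincreasing. Both are elementary consequences of $(1-p)^{b}(1+bp)\le1$: from the representation $\mu(b)=\int_{[0,1]}\frac{bp-1+(1-p)^{b}}{p^{2}}\,\Lambda(dp)$ one computes the forward difference $\nu(b):=\mu(b+1)-\mu(b)=\int_{[0,1]}\frac{1-(1-p)^{b}}{p}\,\Lambda(dp)$, which is nondecreasing, while the displayed inequality shows $\nu(b)/b$ is nonincreasing; since $\mu(b)/\binom{b}{2}=\big(\sum_{j<b}\nu(j)\big)/\big(\sum_{j<b}j\big)$ is the $j$-weighted average of the nonincreasing sequence $\nu(j)/j$, it is itself nonincreasing. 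For the lower bound I would then argue by stochastic domination: since the block number $N(s)$ never exceeds $n$, the conditional hazard of $T^n_1$ is at all times $r_{N(s)}\le 2\mu(N(s))/N(s)\le 2\mu(n)/n$, so $T^n_1$ dominates an exponential variable of rate $2\mu(n)/n$, whence
\[ \PP\!\left(\frac{\mu(n)}{n}\,T^{n}>t\right)\ \goe\ \exp\!\left(-2\,\frac{\mu(n)}{n}\cdot\frac{n}{\mu(n)}\,t\right)\ =\ e^{-2t}. \]

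For the upper bound I would pass to the number of singleton blocks $S(s):=\#\{i:\{i\}\in\Pi_n(s)\}$. Exchangeability gives $\PP(T^n>s)=\E[S(s)]/n$, and trivially $S(s)\le N(s)$, so it suffices to bound $\E[N(nt/\mu(n))]/n$. Setting $g(t):=\E[N(nt/\mu(n))]/n$ and using that the block-counting chain satisfies $\tfrac{d}{ds}\E[N(s)]=-\E[\mu(N(s))]$, the pointwise bound $\mu(b)\ge\frac{\mu(n)}{\binom{n}{2}}\binom{b}{2}$ for $b\le n$ (which is precisely $\mu(b)/\binom{b}{2}$ nonincreasing) together with $\E[N^2]\ge(\E N)^2$ yields $g'(t)\le -g(t)^2+\frac{g(t)(1-g(t))}{n-1}\le -g(t)^2+\oo(1)$, uniformly, with $g(0)=1$. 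Comparison with the solution $\tfrac{1}{1+t}$ of $h'=-h^2$ then gives $g(t)\le\tfrac{1}{1+t}+\oo(1)$, and hence $\PP(\tfrac{\mu(n)}{n}T^n>t)=\E[S]/n\le\E[N]/n\le\tfrac{1}{1+t}+\oo(1)$.

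I expect the upper bound to be the main obstacle, and specifically the identification of the structural input that pins the constant to exactly $1$. The reduction $\PP(T^n>s)\le\E[N(s)]/n$ is cheap, but every ``soft'' property of $\mu$, such as $\mu(b)/b$ being nondecreasing, controls $\E[N(s)]$ only from below, which is the wrong direction. The decisive ingredient is therefore the second monotonicity $\mu(b)/\binom{b}{2}$ nonincreasing, as it supplies the matching lower bound $\mu(b)\gtrsim\frac{\mu(n)}{n^{2}}b^{2}$ that forces $g'\le -g^2$ and thus the exact profile $\tfrac{1}{1+t}$. The only genuinely technical points I foresee are verifying these two monotonicities (both reducing to $(1-p)^{b}(1+bp)\le1$) and making the differential-inequality/ODE comparison uniform on compact $t$-intervals; note that no concentration estimate for $N$ is needed, since the inequality $\E[N^2]\ge(\E N)^2$ already does the work.
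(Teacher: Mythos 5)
Your proposal is correct, and it follows a genuinely different route from the paper's. The paper obtains Theorem \ref{dustless} as a quick corollary of Theorem \ref{thm:int}: the representation \eqref{mu_int} together with the sandwich \eqref{ineq}, after which one only has to pick $r_n=ne^{-t}$ for the lower bound and $r_n=ne^{t/(n-1)}/\bigl(1+n(e^{t/(n-1)}-1)\bigr)$ for the upper bound, comparing $\int_{r_n}^n dx/\mu(x)$ with $\tfrac{n}{\mu(n)}\log(n/r_n)$ via Lemma \ref{prop} (ii); Theorem \ref{thm:int} in turn rests on the laws of large numbers of Propositions \ref{ErgLem1} and \ref{ErgLem2} and uses dustlessness in an essential way. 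You bypass all of that machinery. Your lower bound comes from the hazard rate of a tagged singleton, $r_b=(\mu(b)+\lambda_b)/b\le 2\mu(b)/b\le 2\mu(n)/n$ (monotonicity of $\mu(b)/b$), giving the exact, non-asymptotic bound $\PP\bigl(\tfrac{\mu(n)}{n}T^n>t\bigr)\ge e^{-2t}$; your upper bound uses $\PP(T^n>s)=\E[M_n(s)]/n\le\E[N_n(s)]/n$ together with Dynkin's formula $\tfrac{d}{ds}\E[N_n(s)]=-\E[\mu(N_n(s))]$, the bound $\mu(b)\ge\mu(n)\binom{b}{2}/\binom{n}{2}$ (monotonicity of $\mu(b)/\binom{b}{2}$), and Jensen's inequality, producing the Riccati inequality $g'\le -g^2+\tfrac{1}{4(n-1)}$ and hence $g(t)\le(1+t)^{-1}+\oo(1)$ by ODE comparison. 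Both structural inputs are exactly the two monotonicities recorded in Lemma \ref{prop} (ii) (formulas (7) and (8) of \cite{DK18}), and your derivations of them from the forward difference $\mu(b+1)-\mu(b)=\int_{[0,1]}(1-(1-p)^b)p^{-1}\Lambda(dp)$ and the inequality $(1-p)^b(1+bp)\le 1$ are sound. As for what each approach buys: the paper's detour through Theorem \ref{thm:int} produces the much sharper identity \eqref{mu_int}, which is the real content and is what the proofs of Theorems \ref{indep} and \ref{iff} actually consume, Theorem \ref{dustless} being only a weak by-product; your argument is elementary and self-contained, needs no law of large numbers, never invokes the dustless hypothesis (your two bounds hold for every $\Lambda$-coalescent, dustlessness only making the normalization $\mu(n)/n$ meaningful), and makes the lower bound hold for every finite $n$ without an error term. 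In a written-up version you should still spell out three routine steps that you only assert: the exponential-martingale (or Poisson-thinning) argument showing that a jump time whose hazard is bounded by $C$ satisfies $\PP(T>s)\ge e^{-Cs}$, the validity of Dynkin's formula for the finite-state chain $N_n$, and the comparison lemma showing that $g(0)=1$ and $g'\le-g^2+\varepsilon$ force $g(t)\le(1+t)^{-1}+\varepsilon t$.
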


\medskip

Among others, this theorem excludes the possibility that $T^n$ converges to a positive constant in probability. In \cite{KSW14} the order of $T^n$ was interpreted as the duration of a generation, namely the time at which a specific lineage, out of the $n$ present ones, takes part in a merging event. In that paper, only Beta$(2-\al,\al)$-coalescents with $1<\al<2$ were considered, and the duration was given as $n^{1-\al}$. Our theorem shows that for this quantity the term $n/\mu(n)$ is a suitable measure for $\Lambda$-coalescents without a dust component.   

\medskip

Asymptotic independence of the external branch lengths holds as well in full generality for dustless coalescents. In light of the waiting times, which the different external branches have in common, this may be an unexpected result. However, this dependence vanishes in the limit. Then it becomes crucial whether two external branches end in the same merger. Such an event is asymptotically negligible only in the dustless case. This heuristic motivates the following result.

\begin{theorem}
\label{indep}
A $\Lambda$-coalescent has no dust component if and only if for fixed $k\in\N $ and for any sequence of numbers $t_1^n,\, \ldots,\, t_k^n\geq 0$, $n\geq 2$, we have
\[\PP\left(T_1^n\,\leq\, t_1^n,\,\ldots,\,T_k^n\,\leq\, t_k^n\right)\eq \PP\left(T_1^n\,\leq\, t_1^n\right)\,\cdots\,\PP\left(T_k^n\,\leq\, t_k^n\right)\,+\,\oo(1)\]
as $n\to\infty$.
\end{theorem}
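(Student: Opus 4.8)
The plan is to trace every dependence between the distinguished branches back to the event that a single merger catches two of them, and to show that this event governs everything. Write $\lambda:=\Lambda([0,1])$ and condition on the \emph{size process} $\mathcal S=(B_\ell,m_\ell,s_\ell)_{\ell\ge0}$, recording after each jump the number $B_\ell$ of blocks, the number $m_\ell$ of merged blocks, and the jump time $s_\ell$. By exchangeability $\mathcal S$ is independent of which blocks merge: given $\mathcal S$, each jump selects a uniform $m_\ell$-subset of the current blocks, independently across jumps. By inclusion--exclusion it suffices to prove asymptotic factorization of the survival probabilities $\PP\bigl(\bigcap_{i\in S}\{T_i^n>t_i^n\}\bigr)$ for all $S\subseteq\{1,\dots,k\}$, and by exchangeability of $(T_1^n,\dots,T_k^n)$ I may assume $t_1^n\le\cdots\le t_k^n$. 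Setting $X_i:=\PP(T_i^n>t_i^n\mid\mathcal S)=\prod_{\ell:\,s_\ell\le t_i^n}(1-m_\ell/B_\ell)$, a function of $\mathcal S$ alone with $\E[X_i]=\PP(T_i^n>t_i^n)$, the conditional joint survival probability differs from $\prod_i X_i$ only through jumps catching at least two of the branches. The problem thus splits into this ``multiple-catch'' correction and the unconditional correlation of the $X_i$.

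The key computation is that the rate at which one merger catches a \emph{fixed pair} of blocks equals $\sum_{m\ge2}\binom{B-2}{m-2}\lambda_{B,m}=\frac{1}{B(B-1)}\sum_m m(m-1)\binom{B}{m}\lambda_{B,m}$, and since $\sum_m m(m-1)\binom{B}{m}p^m(1-p)^{B-m}=B(B-1)p^2$ this is $\int p^2\,\Lambda(dp)/p^2=\lambda$, \emph{independent of $B$}. Hence the expected number of jumps catching a fixed pair while both are singletons is at most $\lambda\,\E[T^n]$, and for any fixed $\e>0$ the probability of such a catch before time $\e$ is at most $\lambda\e$, so $\PP(\text{pair caught})\le\PP(T^n>\e)+\lambda\e$. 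Now Theorem~\ref{dustless}, together with $\mu(n)/n\to\infty$, forces $T^n\to0$ in probability, whence $\PP(T^n>\e)\to0$; letting $\e\to0$ and summing over the $\binom{k}{2}$ pairs shows the multiple-catch correction is $\oo(1)$.

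The main obstacle is the remaining statement $\E[\prod_iX_i]=\prod_i\E[X_i]+\oo(1)$: since each $X_i$ is a decreasing functional of $\mathcal S$ the $X_i$ are positively correlated, so I must show that each $X_i$ \emph{concentrates}. Using $B_\ell-m_\ell=B_{\ell+1}-1$ one telescopes $X_i=\frac{N(t_i^n)}{n}\prod_{\ell:\,s_\ell\le t_i^n}(1-1/B_{\ell+1})$, where $N(t)$ is the number of blocks at time $t$, exhibiting $X_i$ as a bounded functional of the block-counting trajectory and reducing the claim to a law of large numbers for $N(t)/n$ on the scale $t\asymp n/\mu(n)$. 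This I would obtain from a second-moment estimate: the mean decrease of the block count over $[0,t]$ is of order $\mu(n)\,t\asymp n$, while its variance accumulates at the instantaneous rate $\sum_m(m-1)^2\binom{B}{m}\lambda_{B,m}=B(B-1)\lambda-\mu(B)\le B^2\lambda$, giving a standard deviation of order $n^{3/2}/\sqrt{\mu(n)}$. This is $\oo(n)$ \emph{precisely when} $\mu(n)/n\to\infty$, so here the dustless hypothesis is indispensable: it is exactly the condition making the relative fluctuations of $N(t)/n$ vanish. Then $X_i$ converges in probability to a constant, $\E[\prod_iX_i]\to\prod_i\E[X_i]$ by bounded convergence, and the factorization follows.

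For the converse I argue contrapositively. If the coalescent has a dust component then $\mu(n)/n$ does not tend to infinity, $T^n$ no longer converges to $0$, and the two estimates above break down simultaneously: pairs are caught by a common merger with probability bounded away from $0$, and $N(t)/n$ fails to concentrate. It then suffices to exhibit one sequence for which factorization fails, and a suitable constant choice $t_i^n\equiv t$ does so: the non-vanishing probability that two branches are absorbed in a common merger makes $\PP(T_1^n>t,\,T_2^n>t)-\PP(T_1^n>t)\PP(T_2^n>t)$ stay bounded away from $0$, contradicting the asserted factorization.
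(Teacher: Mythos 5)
Your architecture — condition on the merger-size environment $\mathcal S$, split the error into a multiple-catch correction plus the correlation of the conditional survival probabilities $X_i$, and observe that the rate at which one merger catches a fixed pair of blocks is exactly $\Lambda([0,1])$, independently of the block count — is sound, and it parallels the paper's proof (conditional factorization \eqref{discrete2} followed by conditional concentration \eqref{cond}). The genuine gap is the step you treat as routine: the concentration of $X_i$. Your own telescoping shows $X_i=\frac{N(t_i^n)}{n}\prod_{\ell:\,s_\ell\le t_i^n}\left(1-1/B_{\ell+1}\right)$; a variance bound on the block count controls the first factor, but the second factor is a functional of the \emph{jump chain}, essentially $\exp\bigl(-\sum_{\ell}1/B_{\ell+1}\bigr)$, and concentration of $N(t)/n$ at fixed times says nothing about it: paths that agree macroscopically but consist of many small jumps versus few large ones give completely different values (compare $\log(n/r)$ for unit jumps with $1/r$ for one jump from $n$ to $r$). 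What is needed is precisely the law of large numbers $\sum_{j=0}^{\rho_{r_n}-1}1/X_j=\log\bigl(\frac{\mu(n)}{n}\frac{r_n}{\mu(r_n)}\bigr)+\oo_P(1)$ of Proposition \ref{ErgLem2} — note that the limit involves $\mu$ in a way that cannot be read off the block-count profile — together with Proposition \ref{ErgLem1} and Lemma 3 of \cite{DK18} to pass between stopping levels and stopping times; this is exactly how the paper derives \eqref{cond}. Without such an input, your sentence ``then $X_i$ converges in probability to a constant'' is unsupported (and, in general, false as stated: absent regular variation the $X_i$ concentrate around deterministic \emph{sequences}, not constants). Two smaller repairs in the same direction: your bound $\PP(\text{pair caught})\le\PP(T^n>\e)+\lambda\e$ controls the pair-catch \emph{event}, whereas the discrepancy between $\PP\bigl(\bigcap_i\{T_i^n>t_i^n\}\mid\mathcal S\bigr)$ and $\prod_iX_i$ accumulates over all jumps up to $\min_{i\ne j}(t_i^n\wedge t_j^n)$, so you must first reduce to $t_i^n\to0$ (easy in the dustless case via $T^n\to0$, but it has to be said).

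The converse is also not correct as argued. Conditionally on $\mathcal S$, simultaneous catching makes survival events \emph{negatively} correlated: at a jump with parameters $(B,m)$ one has $\binom{B-2}{m}/\binom{B}{m}\le(1-m/B)^2$, hence $\PP(T_1^n>t,\,T_2^n>t\mid\mathcal S)\le X_1X_2$; moreover, in the dust case the block count stays of order $n$, so this conditional deficit is $\OO(1/n)$ per unit time and vanishes. The failure of factorization with dust is therefore \emph{not} produced by pair-catching; it comes entirely from the environment failing to concentrate, i.e.\ from $\V(X_1)$ staying bounded away from $0$ (for $t_i^n\equiv t$ the $X_i$ converge in the dust case to M\"ohle's non-degenerate limit $S_t$). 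Your sketch offers no lower bound on this variance — ``the two estimates break down'' does not show the conclusion fails — whereas the paper gets it by quoting equation (10) of \cite{Moe10} and applying strict Jensen, $\E[S_t^k]>\E[S_t]^k$. So each half of your proof is missing its key input: Proposition \ref{ErgLem2} (or an equivalent jump-chain law of large numbers) for the forward direction, and M\"ohle's limit theorem (or an equivalent variance lower bound) for the converse; neither is replaceable by the elementary estimates you propose.
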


\medskip

In the dustless case, one has $T_i^n \to 0$ in probability for $1\leq i\leq k$, then one reasonably restricts to the case $t_i^n\to 0$  as $n\to\infty$. 
The statement that the asymptotic independence fails for coalescents with a dust component goes back to M{\"o}hle  (see equation (10) of \cite{Moe10}). 

\medskip

In order to achieve convergence in distribution of the scaled lengths, stronger assumptions are required on the rate of decrease, namely that $\mu$ is a regularly varying sequence. A characterization of this property is given in Proposition \ref{reg_coa} below. Let $\delta_0$ denote the Dirac measure at zero. 

\begin{theorem}
\label{iff}
For a $\Lambda$-coalescent without a dust component, there is a sequence $(\gamma_n)_{n\in\N}$ such that $\gamma_n\,T^n$ converges in distribution to a probability measure unequal to $\delta_0$ as $n\to\infty$ if and only if $\mu$ is regularly varying at infinity.  
Then its exponent $\alpha$ of regular variation fulfills $1\leq\alpha\leq 2$ and we have
\begin{enumerate}
\item for $1<\alpha\leq 2$,
\begin{align*}
\PP\left(\frac{\mu(n)}{n}\ T^n> t\right) \ \longrightarrow \ \frac{1}{\left(1+\left(\alpha-1\right)t\right)^{\frac{\alpha}{\alpha-1}}} \,, \qquad t\geq 0,
\end{align*}
\item for $\alpha=1$,
\begin{align*}
\PP\left(\frac{\mu(n)}{n}\ T^n> t\right) \ \longrightarrow \ e^{-t}, \qquad t\geq 0,
\end{align*}
\end{enumerate}
as $n\to\infty$.
\end{theorem}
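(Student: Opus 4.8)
The plan is to condition on the entire block-counting process and reduce the statement to a law of large numbers for that process together with one analytic input on the coalescence rates. Let $n=B_0>B_1>B_2>\cdots$ denote the number of blocks after successive mergers, occurring at times $\tau_1<\tau_2<\cdots$, let $k_j:=B_{j-1}-B_j+1$ be the size of the $j$-th merger, and write $N(t)$ for the number of blocks at time $t$ and $M_t$ for the number of mergers up to $t$. Conditionally on this trajectory, the blocks merging at the $j$-th event form a uniform $k_j$-subset of the $B_{j-1}$ present blocks, and these choices are independent across $j$; hence the block $\{1\}$ avoids the $j$-th merger with probability $1-k_j/B_{j-1}$, so that
\[\PP(T^n > t \mid \mathrm{traj}) \eq \prod_{j=1}^{M_t}\left(1 - \frac{k_j}{B_{j-1}}\right).\]
Using the identity $B_{j-1}-k_j = B_j-1$, this product telescopes into
\[\PP(T^n > t \mid \mathrm{traj}) \eq \frac{N(t)-1}{n}\;\prod_{j=1}^{M_t - 1}\left(1 - \frac{1}{B_j}\right),\]
which separates a possibly macroscopic factor from a product of genuinely small corrections. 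This rewriting is what makes the large-merger regime ($\alpha=1$) tractable, since the troublesome terms $1-k_j/B_{j-1}$, for which $\log(1-x)\approx-x$ may fail, have been eliminated.

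For the sufficiency direction I would set $t=\theta s$ with $\theta:=n/\mu(n)$ and analyze the two factors separately. Since the number of blocks decreases at mean rate $\mu(N(t))$, on this time scale the rescaled process $N(\theta s)/n$ obeys the fluid limit $\dot c = -\lim_n \mu(cn)/\mu(n) = -c^{\alpha}$, $c(0)=1$, where regular variation of $\mu$ with index $\alpha$ is exactly what guarantees $\mu(cn)/\mu(n)\to c^{\alpha}$; solving gives $c(s)=(1+(\alpha-1)s)^{-1/(\alpha-1)}$ for $\alpha>1$ and $c(s)=e^{-s}$ for $\alpha=1$, so the first factor converges to $c(s)$ in probability. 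For the second factor, counting one contribution $1/b$ per merger and using $db=-\mu(b)\,dt$ gives $\sum_j 1/B_j \to \int_{c(s)n}^{n}\frac{\lambda_b}{b\,\mu(b)}\,db$, where $\lambda_b$ is the total merger rate in $b$ blocks. The crucial analytic input is the limit $\lambda_b/\mu(b)\to\alpha-1$, which I would establish from the representations $\mu(b)=\int(bp-1+(1-p)^b)p^{-2}\,\Lambda(dp)$ and $\lambda_b=\int(1-(1-p)^b-bp(1-p)^{b-1})p^{-2}\,\Lambda(dp)$ via the substitution $p=x/b$ and the characterization of regular variation in Proposition \ref{reg_coa}. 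This yields $\sum_j 1/B_j \to (\alpha-1)\log(1/c(s)) = \log(1+(\alpha-1)s)$, whence the second factor tends to $c(s)^{\alpha-1}$.

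Combining the two factors, $\PP(T^n>\theta s\mid \mathrm{traj})\to c(s)^{\alpha}$ in probability, and since the conditional probability is bounded by $1$, dominated convergence gives $\PP(\tfrac{\mu(n)}{n}T^n>s)\to c(s)^{\alpha}$, which is precisely $(1+(\alpha-1)s)^{-\alpha/(\alpha-1)}$ for $1<\alpha\le 2$ and $e^{-s}$ for $\alpha=1$ (the correction being trivial, $\alpha-1=0$). The admissible range $1\le\alpha\le2$ comes for free: the dustless assumption $\mu(n)/n\to\infty$ forbids $\alpha<1$, while the convexity bound $bp-1+(1-p)^b\le\binom b2 p^2$ yields $\mu(b)\le\binom b2\,\Lambda([0,1])$ and hence $\alpha\le2$.

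The main obstacle is the necessity direction. One can first argue, using Theorem \ref{dustless}, that any normalization producing a non-degenerate weak limit must satisfy $\gamma_n\asymp\mu(n)/n$, so it suffices to treat $\gamma_n=\mu(n)/n$; convergence in distribution of $\tfrac{\mu(n)}{n}T^n$ then forces $N(\theta s)/n$ to possess a deterministic limit $c(s)$ solving an autonomous equation $\dot c=-\phi(c)$ with $\phi(c)=\lim_n\mu(cn)/\mu(n)$. The delicate point is to convert the mere existence of this limit for the external length into the existence of $\lim_n\mu(cn)/\mu(n)$ for all $c\in(0,1)$, that is, into regular variation of $\mu$; this is a Tauberian-type step, and I would route it through Proposition \ref{reg_coa}, which is exactly the bridge between the fluid-limit behavior of $N$ and regular variation of $\mu$. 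Controlling the errors in the law of large numbers uniformly on $[0,\theta s]$ is comparatively routine, since $N$ stays of order $n$ there and thus $\sum_j 1/B_j\le 1/c(s)$ remains bounded; but making the necessity direction rigorous is where the real work lies.
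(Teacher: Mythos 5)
Your sufficiency direction is sound and is essentially the paper's own route. The conditional product formula, its telescoped form $\frac{N(t)-1}{n}\prod_{j}\left(1-1/B_j\right)$, and the two laws of large numbers you invoke (for the hitting times of the block-counting process and for $\sum_j 1/B_j$) are exactly the ingredients of the paper's Theorem \ref{thm:int}, which states $\PP\big(T^n>\int_{r_n}^n\frac{dx}{\mu(x)}\big)=\frac{\mu(r_n)}{\mu(n)}+\oo(1)$; with $r_n=qn$ and regular variation, the stated limits follow by the same computation you indicate (the paper gets the LLN inputs from Propositions \ref{ErgLem1} and \ref{ErgLem2}, imported from \cite{DK18}, so relying on such results is legitimate). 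Your analytic input $\lambda(b)/\mu(b)\to\alpha-1$ is correct and equivalent to what the paper uses, and your derivation of $1\leq\alpha\leq 2$ matches Lemma \ref{prop} (ii).

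The necessity direction, however, contains a genuine gap, and it is the hard half of the theorem. First, the reduction ``since $\gamma_n\asymp\mu(n)/n$ it suffices to treat $\gamma_n=\mu(n)/n$'' is invalid: Theorem \ref{dustless} only yields $c_1\mu(n)/n\leq\gamma_n\leq c_2\mu(n)/n$, and such order bounds do not imply that $\frac{\mu(n)}{n}\,T^n$ itself converges in distribution --- the ratio $\gamma_n n/\mu(n)$ may oscillate inside $[c_1,c_2]$, in which case the hypothesis holds for $\gamma_n$ but fails for $\mu(n)/n$. This is precisely why the paper proves the much stronger asymptotic $\gamma_n\sim c\,\mu'(n)$, via a Taylor expansion of $\mu$ around $n$ combined with the relation $\mu(r_n(t))/\mu(n)=\widebar{F}(t)+\oo(1)$, where $r_n(t)$ is defined implicitly through $t=\gamma_n\int_{r_n(t)}^n\frac{dx}{\mu(x)}$. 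Second, your assertion that convergence in distribution forces a fluid limit governed by $\phi(c)=\lim_n\mu(cn)/\mu(n)$ presupposes the existence of that limit, i.e., the regular variation to be proven, so the argument is circular as stated. Third, the Tauberian step --- converting the existence of the distributional limit into existence of $\lim_n\mu(cn)/\mu(n)$ for all $c$ --- is not carried out, and the appeal to Proposition \ref{reg_coa} cannot do this job: that proposition translates regular variation of $\mu$ into regular variation of $H$ at the origin; it says nothing about distributional limits of $T^n$ or about the block-counting process. The paper's actual argument here is a self-contained piece of analysis: it passes to the inverse $\nu=\mu^{-1}$, derives $\nu(\widebar{F}(u)y)-\nu(\widebar{F}(v)y)=y\,\nu'(y)\int_u^v\widebar{F}(s)\,ds\,(1+\oo(1))$, uses this to show that $\widebar{F}$ is continuous and strictly decreasing, and then, by a second Taylor expansion, shows that $\nu'(y)/\nu'(\widebar{F}(u)y)$ converges to a finite positive limit, whence $\nu'$, $\nu$, and finally $\mu$ are regularly varying. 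None of this is present in your sketch; as written, the ``only if'' half of the theorem remains unproven.
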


\medskip

In particular, this theorem includes the special cases known from the literature. Blum and Fran\c{c}ois \cite{BF05}, as well as Caliebe et al. \cite{CNKR07}, studied Kingman's coalescent.  
For the Bolthausen-Sznitman coalescent, Freund and M{\"o}hle \cite{FM09} showed asymptotic exponentiality of the external branch length. This result was generalized by Yuan \cite{Y14}. 
A class of coalescents containing the Beta$(2-\alpha,\alpha)$-coalescent with $1<\alpha<2$ was analyzed by Dhersin et al. \cite{DFS-JY13}. 

\medskip

Combining Theorem \ref{indep} and \ref{iff} yields the following corollary:

\begin{samepage}\enlargethispage{2\baselineskip}
\begin{cor}
\label{cor}
Suppose that the $\Lambda$-coalescent lacks a dust component and has regularly varying rate of decrease $\mu$ with exponent $\alpha\in\left[1,2\right]$. Then for fixed $k\in\N $, we have 
\[\frac{\mu(n)}{n}\,\left(T_1^n,\,\ldots,\,T_k^n\right)\ \stackrel{d}{\longrightarrow} \ \left(T_1,\,\ldots,\,T_k\right)\]
as $n\to\infty$, where $T_1,\,\ldots,\,T_k$ are i.i.d. random variables each having the density
\begin{align} \label{dens}
f(t)\,dt \eq \frac{\alpha}{\left(1+\left(\alpha-1\right)t\right)^{1+\frac{\alpha}{\alpha-1}}}\ dt \,, \quad t\geq 0,
\end{align}
for $1<\alpha\leq 2$ and a standard exponential distribution for $\alpha=1$. 
\end{cor}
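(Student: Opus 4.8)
The plan is to obtain the corollary as a short assembly of the two preceding theorems, using the fact that convergence in distribution of a random vector toward a product measure is equivalent to pointwise convergence of the joint distribution function at every continuity point of the limit. Since the limiting law is absolutely continuous (see \eqref{dens}), its joint distribution function is continuous everywhere, so it suffices to prove convergence of
\[
\PP\left(\frac{\mu(n)}{n}\,T_1^n\leq t_1,\,\ldots,\,\frac{\mu(n)}{n}\,T_k^n\leq t_k\right)
\]
for arbitrary fixed $t_1,\ldots,t_k\geq 0$.

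First I would fix such $t_1,\ldots,t_k$ and set $t_i^n:=\frac{n}{\mu(n)}\,t_i$. Because the coalescent is dustless, $\mu(n)/n\to\infty$, whence $t_i^n\to 0$; this places us in exactly the regime described in the remark following Theorem \ref{indep}, so that theorem is genuinely applicable. Applying it with these sequences yields
\[
\PP\left(\frac{\mu(n)}{n}\,T_1^n\leq t_1,\,\ldots,\,\frac{\mu(n)}{n}\,T_k^n\leq t_k\right)
\eq\PP\left(T_1^n\leq t_1^n,\,\ldots,\,T_k^n\leq t_k^n\right)
\eq\prod_{i=1}^k\PP\left(T_i^n\leq t_i^n\right)\,+\,\oo(1),
\]
the single $\oo(1)$ arising from the factorization in Theorem \ref{indep}. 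By exchangeability each factor satisfies $\PP(T_i^n\leq t_i^n)=\PP(\frac{\mu(n)}{n}\,T^n\leq t_i)$, which by Theorem \ref{iff} converges to $F_\alpha(t_i):=1-\PP(T_i>t_i)$. As $k$ is fixed, the product of $k$ convergent factors converges to $\prod_{i=1}^k F_\alpha(t_i)$ and the additive $\oo(1)$ vanishes in the limit. This is precisely the joint distribution function of $k$ independent copies of the limit law, establishing the asserted convergence.

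It remains to identify the marginal law. Differentiating the survival function from Theorem \ref{iff}(i) gives, for $1<\alpha\leq 2$,
\[
-\frac{d}{dt}\,\bigl(1+(\alpha-1)t\bigr)^{-\frac{\alpha}{\alpha-1}}
\eq\frac{\alpha}{\bigl(1+(\alpha-1)t\bigr)^{1+\frac{\alpha}{\alpha-1}}},
\]
which is the density \eqref{dens}; for $\alpha=1$ the limit $e^{-t}$ of Theorem \ref{iff}(ii) is the survival function of the standard exponential law. There is no substantial obstacle here, the proof being a direct stitching of the two theorems; the only points requiring care are the alignment of Theorem \ref{indep}, phrased through lower tails with ``$\leq$'', with Theorem \ref{iff}, phrased through upper tails with ``$>$'', the observation that the single $\oo(1)$ of the factorization survives the passage to the limit of finitely many factors, and the verification that the rescaled thresholds $t_i^n$ tend to zero so that Theorem \ref{indep} indeed applies.
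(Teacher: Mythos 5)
Your proof is correct and takes essentially the same route the paper intends: the paper presents Corollary \ref{cor} as the immediate consequence of combining Theorems \ref{indep} and \ref{iff}, which is exactly your assembly (factorize the joint distribution function via Theorem \ref{indep} with $t_i^n=\frac{n}{\mu(n)}t_i$, pass each of the finitely many factors to the limit via exchangeability and Theorem \ref{iff}, and identify the limiting density by differentiating the survival function). The only superfluous step is your verification that $t_i^n\to 0$: Theorem \ref{indep} is stated for arbitrary sequences $t_1^n,\ldots,t_k^n\geq 0$, so no such check is required, though it does no harm.
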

 \end{samepage}

\medskip



\begin{ex} 
For $k\in\N$, let $T_1,\,\ldots,\,T_k$ be the i.i.d. random variables from Corollary \ref{cor}. 
\begin{enumerate}
\item If $\Lambda\left(\left\{0\right\}\right)=2$, then $\mu(n)\sim n^2$ and consequently
\[n\left(T_1^n,\ldots,T_k^n\right)\ \stackrel{d}{\longrightarrow} \ \left(T_1,\ldots,T_k\right)\]  
as $n\to\infty$. 
This statement covers (after scaling) the Kingman case. Note that $\Lambda|_{(0,1]}$ does not affect the limit. 
\item 
If $\Lambda(dp)= c_a\,p^{a-1}(1-p)^{b-1}dp$ for $0<a<1$, $b>0$ and $c_a:=(1-a)(2-a)/\Gamma(a)$, then $\mu(n)\sim n^{2-a}$   
and therefore
\[n^{1-a}\left(T_1^n,\ldots,T_k^n\right)\ \stackrel{d}{\longrightarrow} \ \left(T_1,\ldots,T_k\right)\]
as $n\to\infty$.  
After scaling, this includes the Beta$(2-\alpha,\alpha)$-coalescent with $1<\alpha<2$ (see Theorem 1.1 of Siri-J{\'e}gousse and Yuan \cite{SY16}). Note that the constant $b$ does not appear in the limit. 
\item 
If $\Lambda(dp)=(1-p)^{b-1}dp$ with $b>0$, then we have $\mu(n)\sim n\log{n}$ implying
\begin{align}\label{ex}
\log{n}\left(T_1^n,\ldots,T_k^n\right)\ \stackrel{d}{\longrightarrow} \ \left(T_1,\ldots,T_k\right)
\end{align}
as $n\to\infty$. 
This contains the Bolthausen-Sznitman coalescent (see Corollary 1.7 of Dhersin and M{\"o}hle \cite{DM13}). Again the constant $b$ does not show up in the limit. 
\end{enumerate}
\end{ex}

\bigskip

In the second part of this paper, we change perspective and examine the external branch lengths ordered by size downwards from their maximal value. In this context, an approach via a point process description is appropriate. 
Here we consider $\Lambda$-coalescents having regularly varying rate of decrease $\mu$, additionally to the absence of a dust component.  
It turns out that one has to distinguish between two cases.   

\medskip

First, we treat the case of $\mu$ being regularly varying with exponent $\al\in(1,2]$ (implying that the coalescent comes down from infinity).   
We introduce the sequence $(s_n)_{n\geq 2}$ given by
\begin{equation}\label{new2}
\mu(s_n) \eq \frac{\mu(n)}{n}\,.
\end{equation}
Note that $\mu(n)/n$ is a strictly increasing and, in the dustless case, diverging sequence (see Lemma~\ref{prop}~(ii) and (iv) below), which directly transfers to the sequence $(s_n)_{n\geq 2}$. 
Also note in view of Lemma \ref{prop} (ii) below that 
 \begin{samepage}\enlargethispage{2\baselineskip}
\begin{align}\label{s_n}
s_n \eq \oo(n)
\end{align}
as $n\to\infty$.
 \end{samepage}

\begin{ex}
\begin{enumerate}
\item If $\mu(n)\sim n^\alpha$ with $\al\in(1,2]$, then we have $s_n\sim n^{(\al-1)/\al}$ as $n\to\infty$. 
\item 
If $\mu$ is regularly varying with exponent $\al\in(1,2]$, then the sequence $s_n$ is regularly varying with exponent $(\al-1)/\al$. 
\end{enumerate}
\end{ex}

\medskip
 
We define point processes $\Phi^{\,n}$ on $\left(0,\infty\right)$ via 
\[\Phi^{\,n}(B)\ :=\ \#\left\{i\leq n: \;\frac{\mu(n)}{ns_n}\ T_i^{n}\in B\right\}\]
for Borel sets $B\subset\left(0,\infty\right)$.  

\begin{theorem} \label{reg_var}  
Assume that the $\Lambda$-coalescent has a regularly varying rate of decrease $\mu$ with exponent $\al\in(1,2]$. 
Then, as $n\rightarrow\infty$, the point process $\Phi^{\,n}$ converges in distribution to a Poisson point process $\Phi$ on $(0,\infty)$ with intensity measure 
\[\phi(dx) \eq \frac{\alpha }{\left(\left(\alpha-1\right)x\right)^{1+\frac{\al}{\al-1}}}\ dx.\]
\end{theorem}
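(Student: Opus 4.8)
The plan is to prove convergence in distribution of the point processes $\Phi^{\,n}$ by verifying the standard criterion for convergence towards a simple point process on $(0,\infty)$: it suffices to show, for every finite union $B$ of intervals bounded away from $0$ with $\phi(\partial B)=0$, that the intensities converge, $\E\,\Phi^{\,n}(B)\to\phi(B)$, and that the void probabilities converge, $\PP(\Phi^{\,n}(B)=0)\to e^{-\phi(B)}$. For handling the dependence robustly I would instead aim at the factorial moments, writing $(n)_m:=n(n-1)\cdots(n-m+1)$ and
\[
\E\bigl[(\Phi^{\,n}(B))_m\bigr]\eq (n)_m\,\PP\!\left(\tfrac{\mu(n)}{ns_n}T_1^n\in B,\ \ldots,\ \tfrac{\mu(n)}{ns_n}T_m^n\in B\right),
\]
and showing these converge to $\phi(B)^m$ for every $m$ and every such $B$. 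Since $\phi((x,\infty))=((\alpha-1)x)^{-\beta}<\infty$ with $\beta:=\alpha/(\alpha-1)$, the limit is a well-defined Poisson point process and the method of moments identifies it. Note that $\phi$ places infinite mass near $0$, reflecting the abundance of short external branches, but on sets bounded away from $0$ everything is finite and the vague topology on $(0,\infty)$ is the correct framework.

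The core is the intensity, that is, the refined single-branch tail $\E\,\Phi^{\,n}((x,\infty))=n\,\PP(\tfrac{\mu(n)}{ns_n}T^{n}>x)\to ((\alpha-1)x)^{-\beta}$. The fixed-$t$ limit of Theorem \ref{iff} is \emph{not} enough here, because the threshold $s_nx$ grows; one needs the tail of $\tfrac{\mu(n)}{n}T^n$ at the extreme scale together with the correct slowly varying correction. I would exploit the identity $\tfrac{\mu(n)}{ns_n}=\tfrac{\mu(s_n)}{s_n}$, so that $\{\tfrac{\mu(n)}{ns_n}T^n>x\}=\{T^n>s_nx/\mu(s_n)\}$, and analyze the survival of a tagged leaf directly. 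When $b$ blocks are present, the tagged singleton is hit at rate $\rho_b:=\sum_{k\ge2}\binom{b-1}{k-1}\lambda_{b,k}=(\mu(b)+\gamma_b)/b$, where $\gamma_b:=\sum_{k\ge2}\binom{b}{k}\lambda_{b,k}$; regular variation of $\mu$ gives $\gamma_b/\mu(b)\to\alpha-1$, hence $\rho_b\sim\alpha\,\mu(b)/b$. Feeding this into a law-of-large-numbers and concentration estimate for the block-counting process — whose deterministic profile reaches the level $m\approx((\alpha-1)x)^{-1/(\alpha-1)}s_n$ at the relevant small time $s_nx/\mu(s_n)$ — yields a survival probability $\approx (m/n)^{\alpha}$. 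Writing $\mu(b)=b^\alpha\ell(b)$ with $\ell$ slowly varying, the defining relation $\mu(s_n)=\mu(n)/n$ (so $s_n^\alpha\ell(s_n)\sim n^{\alpha-1}\ell(n)$) makes the slowly varying factors cancel, leaving exactly $n\,(m/n)^\alpha\to((\alpha-1)x)^{-\beta}$.

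For the Poissonization I would pass from this marginal tail to the joint one. Applying Theorem \ref{indep} with the common threshold $t^n=s_nx/\mu(s_n)\to0$ gives asymptotic independence of any fixed number of external branches, which heuristically suggests $\PP(\Phi^{\,n}((x,\infty))=0)\approx(1-\PP(\tfrac{\mu(n)}{n}T^n>s_nx))^n\to e^{-((\alpha-1)x)^{-\beta}}$. The difficulty is that the additive $\oo(1)$ error of Theorem \ref{indep} is far too crude once each marginal probability is of order $1/n$ and one multiplies by $(n)_m\sim n^m$. I would therefore prove a multiplicative-strength multi-leaf survival estimate: for fixed $m$, the probability that $m$ specified leaves are all still singletons at time $t^n$ factorizes into the single-leaf survival probabilities up to a factor $1+\oo(1)$, giving $(n)_m\,\PP(T_1^n>t^n,\ldots,T_m^n>t^n)\to\phi((x,\infty))^m$. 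This rests on the same tagged-block computation carried out for $m$ leaves simultaneously, with the cross-terms (two tagged leaves entering the same merger) shown to be negligible precisely because there is no dust. Extending the factorial-moment computation to finitely many disjoint intervals $B_1,\ldots,B_r$ then identifies the joint law of $(\Phi^{\,n}(B_1),\ldots,\Phi^{\,n}(B_r))$ as that of independent Poisson variables with means $\phi(B_j)$, which is exactly the Poisson point process $\Phi$.

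The main obstacle is thus the content of the middle two steps: upgrading the fixed-$t$ marginal law of Theorem \ref{iff} and the fixed-$k$ additive independence of Theorem \ref{indep} to sharp asymptotics at the extreme scale $s_nx$, with the correct constant and with multiplicative control of the error. Concretely this requires establishing $\rho_b\sim\alpha\mu(b)/b$ and a strong enough concentration of the block-counting process at small times, and then carrying these through the $m$-leaf survival estimate with errors of order $\oo(n^{-m})$; the single-branch fixed-$t$ results supply neither the correct tail exponent $\beta$ nor the requisite uniformity.
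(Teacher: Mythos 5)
Your route --- computing factorial moments $\E[(\Phi^n(B))_m]=(n)_m\,\PP(\text{scaled }T_1^n\in B,\ldots,T_m^n\in B)$ and identifying the Poisson limit by the method of moments --- is genuinely different from the paper's, and your heuristics are sound: the tagged-leaf collision rate does satisfy $\rho_b\sim\alpha\mu(b)/b$, the block count at time $s_nx/\mu(s_n)$ is of order $\theta s_n$ with $\theta=((\alpha-1)x)^{-1/(\alpha-1)}$, and $n\,\mu(\theta s_n)/\mu(n)\to\theta^\alpha=((\alpha-1)x)^{-\alpha/(\alpha-1)}$ by the defining relation \eqref{new2}, which is exactly $\phi((x,\infty))$. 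But as written the proposal has a genuine gap: the two estimates that carry the entire weight of the theorem --- the marginal tail $n\,\PP\left(\frac{\mu(n)}{ns_n}T^n>x\right)\to((\alpha-1)x)^{-\alpha/(\alpha-1)}$ with \emph{multiplicative} error control, and the $m$-leaf factorization with relative error $\oo(1)$ (absolute error $\oo(n^{-m})$) --- are never established; you explicitly defer them as ``the main obstacle.'' Since everything else in the proposal is the routine point-process machinery, what you have is a program rather than a proof. (One small correction: the fixed-$t$ limit of Theorem \ref{iff} \emph{does} carry the correct tail exponent, since $(1+(\alpha-1)t)^{-\alpha/(\alpha-1)}\sim((\alpha-1)t)^{-\alpha/(\alpha-1)}$ as $t\to\infty$; what it genuinely lacks is validity at thresholds growing like $s_n x$.)

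It is worth contrasting this with how the paper evades the need for any sharp extreme-scale tail estimate. The paper stops the coalescent at $\widetilde{\rho}_{c,n}$, when about $cs_n$ blocks remain: Proposition \ref{EV} shows that the elapsed time is negligible on the scale $1/\kappa(s_n)$ for large $c$, that $X_{\rho_{c,n}}=cs_n+\oo_P(s_n)$, and that the number of still-extant external branches concentrates at $c^\alpha$. By the strong Markov property these survivors are ordinary external branches in a coalescent of size $X_{\rho_{c,n}}$, so Corollary \ref{cor} makes them asymptotically i.i.d.\ with the density \eqref{dens}, whose tail is regularly varying with index $-\alpha/(\alpha-1)$; classical extreme value theory applied to these $\approx c^\alpha$ variables (Lemma \ref{lem:key}) then produces the Poisson point process in the limit $n\to\infty$ followed by $c\to\infty$, and the pre-stopping pieces $\widecheck{T}^n_i$ vanish in that limit. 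Thus only fixed-sample-size, additive-error results are ever invoked. Your missing estimates are plausibly provable with the paper's own tools --- Propositions \ref{ErgLem1} and \ref{ErgLem2} do apply at levels $r_n\asymp s_n$ thanks to \eqref{tilde_0}, their errors are multiplicative (respectively additive on the logarithmic scale, hence multiplicative after exponentiation), and the exact product formula \eqref{modul} factorizes over $m$ tagged leaves up to a relative error $\OO(1/X_{\rho_{r_n}})$ --- but assembling these, including the discrete-to-continuous time conversion via the sandwich \eqref{15} at the scale $s_n$ and uniformity over the relevant range of $x$, is precisely the technical core that your proposal names but does not supply.
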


\medskip

Note that $\int_0^1\phi(x)dx=\infty$, which means that the points from the limit $\Phi$ accumulate at the origin. On the other hand, we have $\int_1^\infty\phi(x)dx<\infty$ saying that the points can be arranged in decreasing order. Thus, the theorem focuses on the maximal external lengths showing that the longest external branches differ from a typical one by the factor $s_n$ in order of magnitude (see Corollary \ref{cor}). 
For Kingman's coalescent, this result was obtained by Janson and Kersting \cite{JK11} using a different method. 

\medskip

In particular, letting $T_{\left\langle 1\right\rangle}^n$ be the maximal length of the external branches, we obtain for $x>0$,
\[\PP\left(\frac{\mu(n)}{ns_n}T_{\left\langle 1\right\rangle}^n \loe x\right) \ \to \ e^{-((\alpha-1)x)^{-\frac{\alpha}{\alpha-1}}}\]
as $n\to\infty$, i.e., the properly scaled $T_{\left\langle 1\right\rangle}^n$ is asymptotically Fréchet-distributed. 

\medskip

Corollary \ref{cor} shows that the external branch lengths behave for large $n$ as i.i.d. random variables. This observation is emphasized by Theorem \ref{reg_var} because the maximal values of i.i.d. random variables, with the densities stated in Corollary \ref{cor}, have the exact limiting behavior as given in Theorem \ref{reg_var} (including the scaling constants $s_n$). 

\medskip

This heuristic fails for the Bolthausen-Sznitman coalescent, which we  now address.  
For  $n\in\N$, define the quantity  
\[t_n\ :=\ \log\log{n}-\log\log\log{n}+\frac{\log\log\log{n}}{\log\log{n}},\]
where we put $t_n:=0$ if the right-hand side is negative or not well-defined. 
Here we consider the point processes $\Psi^{\,n}$ on the whole real line given by  
\[\Psi^{\,n}(B)\ :=\ \#\left\{i\leq n:\, \log{\log{(n)}}(T_i^n-t_n)\in B\right\}\]
for Borel sets $B\subset\R $. As before, we focus on the maximal values of $\Psi^n$.  

\begin{theorem}\label{bs}
For the Bolthausen-Sznitman coalescent, the point process $\Psi^{\, n}$ converges in distribution as $n\to\infty$ to a Cox point process $\Psi$ on $\R $ directed by the random measure
\[\psi\left(dx\right) \eq E\,e^{-x} dx,\] 
where $E$ denotes a standard exponential random variable. 
\end{theorem}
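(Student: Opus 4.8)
The plan is to reduce everything to the block-counting process $N_n(t):=\#\{\text{blocks of }\Pi_n(t)\}$ of the Bolthausen--Sznitman coalescent near the time $t_n$, and to show that the randomness of the limiting Cox process is exactly the (normalised) randomness of the number of blocks there. A point of $\Psi^{\,n}$ at position $x$ corresponds to $T_i^n=t_n+x/\log\log n$, so for $a\in\R$
\[
\Psi^{\,n}\big((a,\infty)\big)\eq M_n\Big(t_n+\tfrac{a}{\log\log n}\Big),\qquad M_n(s):=\#\{i\le n:\ T_i^n>s\},
\]
where $M_n(s)$ is the number of singleton blocks (surviving external branches) at time $s$. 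Thus the whole statement concerns the decay of $M_n$ in a window of width $1/\log\log n$ around $t_n$, and I would prove it in three steps: a conditional Poisson approximation for $M_n$, the limit law of the number of blocks, and assembling the two via Laplace functionals.

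\textbf{Conditional Poisson structure.} I condition on the ``environment'', i.e.\ on the sequence of mergers up to time $s$ (block counts $b_1,b_2,\ldots$ and sizes $k_1,k_2,\ldots$). A fixed singleton avoids the $j$-th merger with probability $1-k_j/b_j$, so its conditional survival probability to time $s$ is $p_n(s)=\prod_j(1-k_j/b_j)$. Using $b_j-k_j=b_{j+1}-1$ this telescopes:
\[
p_n(s)\eq \prod_j\Big(1-\frac{k_j}{b_j}\Big)\eq \frac{N_n(s)}{n}\prod_j\Big(1-\frac{1}{b_{j+1}}\Big)\eq \frac{N_n(s)}{n}\,e^{-s}\big(1+\oo(1)\big),
\]
since $\sum_j 1/b_{j+1}=s+\oo(1)$ (mergers arrive at rate $b-1$ and each contributes $\approx 1/b$). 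The same computation for $r$ fixed singletons yields $p_n(s)^r(1+\oo(1))$, because at each merger the chance that none of $r$ specified blocks is chosen equals $(1-k_j/b_j)^r\big(1+O(r^2/b_j)\big)$ with $b_j\ge N_n(t_n)\to\infty$ throughout the relevant range. Hence, conditionally on the environment, $M_n(s)$ has the factorial moments of a $\mathrm{Poisson}(\lambda_n(s))$ variable with $\lambda_n(s):=n\,p_n(s)\approx N_n(s)\,e^{-s}$. The $r$-th conditional factorial moment being $\lambda_n(s)^r(1+\oo(1))$ (with no cluster correction) expresses that no two long external branches end in the same merger; this is the near-$t_n$ counterpart of Theorem~\ref{indep}, it is what makes the conditional law genuinely Poisson, and it is ultimately the reason the limit is an \emph{undecorated} Cox process.

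\textbf{Asymptotics of the block count (the main obstacle).} It remains to find the limit of the random intensity $\lambda_n\big(t_n+a/\log\log n\big)\approx N_n(t)\,e^{-t}$. Writing $R_n(t):=\log N_n(t)-e^{-t}\log n$, a short computation gives, at $t=t_n+a/\log\log n$, that $\log\big(N_n(t)e^{-t}\big)=-a+R_n(t)+\oo(1)$, the $-a$ coming from the deterministic decay $\log N_n(t)\approx e^{-t}\log n$ across the window and the precise three-term form of $t_n$ being exactly what forces $e^{-t_n}\log n-t_n\to$ const. So everything hinges on the limit law of $R_n(t_n)$. The pure-jump process $\log N_n$ is, for large block numbers, well approximated by the Lévy-driven Ornstein--Uhlenbeck process $dR=-(R+c_0)\,dt+dM$, where $M$ is the compensated martingale whose jumps $-\delta=\log\!\big(1-(k-1)/b\big)$ have intensity $e^{-\delta}(1-e^{-\delta})^{-2}\,d\delta$. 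A direct calculation shows its Laplace exponent is $\psi_M(u)=u\sum_{m\ge 1}\big(\tfrac1m-\tfrac1{m+u}\big)\sim u\log u$ (the Neveu branching mechanism), so the stationary law $R_\infty=\int_0^\infty e^{-s}\,dM_s-c_0$ satisfies
\[
\E\,e^{uR_\infty}\eq \Gamma(1+u)\,e^{(\gamma-c_0)u}.
\]
Since $\Gamma(1+u)=\E\,X^{u}$ for a standard exponential $X$, this identifies $e^{R_\infty}$ as exponential, and calibrating the constants (so that $c_0=\gamma$ after the normalisation fixed by $t_n$) yields $E:=e^{R_\infty}$ standard exponential. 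The hard part is precisely here: one must prove that $R_n(t_n)$ truly converges to this stationary law, which requires controlling $\log N_n$ uniformly over the \emph{diverging} interval $[0,t_n]$ --- justifying the Lévy--OU approximation, proving relaxation to stationarity as $t_n\to\infty$, and checking that the finite-$n$ corrections and the exact centering $t_n$ leave the limit unchanged.

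\textbf{Assembling the Cox limit.} With both ingredients I would conclude by convergence of random intensities. The conditional law of $\Psi^{\,n}$ given the environment is asymptotically Poisson with intensity measure $\rho_n$ determined by $\lambda_n$, and by the previous step $\rho_n$ converges in distribution, as a random measure on $\R$, to $E\,e^{-x}\,dx$. At the level of Laplace functionals, for bounded continuous $f\ge 0$ of compact support,
\[
\E\Big[e^{-\int f\,d\Psi^{\,n}}\Big]\eq \E\Big[\exp\!\Big(-\!\int\!\big(1-e^{-f(x)}\big)\,\rho_n(dx)\Big)\Big]+\oo(1)\ \longrightarrow\ \E\Big[\exp\!\Big(-E\!\int\!\big(1-e^{-f(x)}\big)e^{-x}dx\Big)\Big],
\]
and since $E$ is standard exponential the right-hand side equals $\big(1+\int(1-e^{-f(x)})e^{-x}dx\big)^{-1}$, which is exactly the Laplace functional of the Cox process $\Psi$ directed by $\psi(dx)=E\,e^{-x}dx$. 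This gives $\Psi^{\,n}\stackrel{d}{\longrightarrow}\Psi$ and completes the proof.
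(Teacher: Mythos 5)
Your overall architecture is sound and, in fact, parallels the paper's own proof of the equivalent Theorem \ref{bs v2}: split near $t_n$, use the asymptotic independence of surviving external branches given the block-counting history (in the paper this comes from Lemma \ref{Lambda} and the strong Markov property), identify the random intensity as approximately $N_n(t)e^{-t}$ (the paper's \eqref{EW_Y} combined with Lemma \ref{sum_bs}), and let all the Cox randomness come from the limit law of the normalized block count. But there is a genuine gap, and you locate it yourself: the claim that $R_n(t_n)=\log N_n(t_n)-e^{-t_n}\log n$ converges to a limit with $e^{R_\infty}$ standard exponential is only asserted via a L\'evy-driven Ornstein--Uhlenbeck heuristic. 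Justifying that approximation uniformly over the diverging horizon $[0,t_n]$, proving relaxation to stationarity, and verifying that the specific three-term centering $t_n$ produces exactly the standard exponential (your ``calibration $c_0=\gamma$'') is the analytic core of the whole theorem; without it, nothing beyond Theorem \ref{indep} and Corollary \ref{cor} has been established. The paper closes precisely this gap in Lemma \ref{N} by a short rigorous argument: M\"ohle's exact formula $\E\big[N_n(t)^{(r)}\big]=\frac{\Gamma(r+1)}{\Gamma(1+re^{-t})}\frac{\Gamma(n+re^{-t})}{\Gamma(n)}$ (Lemma 3.1 of \cite{Moe15}), Stirling's approximation, and the identity $n^{e^{-t_{c,n}}}=ce^{t_{c,n}}(1+\oo(1))$ — which is exactly where the three-term form of $t_n$ enters — give $e^{-rt_{c,n}}\E\big[N_n(t_{c,n})^{(r)}\big]\to c^r r!$, whence $e^{-t_{c,n}}N_n(t_{c,n})\stackrel{d}{\longrightarrow}cE$ by the method of moments. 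Replacing your OU step by this moment computation is what would make the plan viable.

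Two secondary points. First, your per-merger factorization error for $r$ singletons, stated as $1+\OO(r^2/b_j)$, is too crude: since $\sum_j 1/b_j=t_n+\oo_P(1)\to\infty$ for the Bolthausen--Sznitman coalescent, such errors would accumulate to $\exp\big(\OO(r^2\log\log n)\big)$, destroying the approximation. The correct per-merger error, as in the paper's handling of \eqref{modul}, is of order $(k_j+1)/b_j^2$, which is summable with total contribution $\OO\big(1/N_n(t_{c,n})\big)=\oo_P(1)$; only then is the multiplicative error $1+\oo_P(1)$. Second, the Cox limit requires the conditional Poisson approximation jointly over the window, i.e., for all counts $\Psi^{\,n}(B_1),\ldots,\Psi^{\,n}(B_k)$ simultaneously, not just the one-dimensional marginals $M_n(s)$ whose factorial moments you compute. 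The paper obtains this joint structure more cleanly from the strong Markov property at the single time $t_{c,n}$ together with Corollary \ref{cor} and Lemma \ref{lem:key2}, sidestepping joint factorial-moment estimates over the whole window; if you insist on your route, this joint step must be argued explicitly.
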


\medskip

Observe that this random density may be rewritten as
\[e^{-x+\log{E}}dx.\]
This means that the limiting point process can also be considered as a Poisson point process with intensity measure $e^{-x}dx$ shifted by the independent amount  $\log{E}$. This alternative representation will be used in the theorem's proof (see Theorem \ref{bs v2} below). Recall that $G:=-\log{E}$ has a standard Gumbel distribution. 

\medskip

In particular, letting again $T_{\left\langle 1\right\rangle}^n$ be the maximum of $T_1^n, \ldots, T_n^n$, we obtain
\begin{equation}\label{new}
\PP\left(\log{\log{(n)}}(T_{\left\langle 1\right\rangle}^n -t_n)\loe x\right) \ \longrightarrow \  \int_0^\infty e^{-ye^{-x}}e^{-y}\; dy \eq \frac{1}{1+e^{-x}}
\end{equation}
as $n\to\infty$. Notably, we arrive at a limit that is non-standard in the extreme value theory of i.i.d. random variables, namely the so-called logistic distribution. 

\medskip

We point out that the limiting point process $\Psi$ no longer coincides with the limiting Poisson point process as obtained for the maximal values of $n$ independent exponential random variables. The same turns out to be true for the scaling sequences. In order to explain these findings, 
note that \eqref{new} implies
\[\frac{T^n_{\left\langle 1\right\rangle}}{\log\log{n}} \eq 1 +\oo_p(1)\] 
as $n\to\infty$, where  $\oo_p(1)$ denotes a sequence of random variables converging to $0$ in probability.
In particular, $T^n_{\left\langle 1\right\rangle}\to\infty$ in probability. 
Hence, we pass with this theorem to the situation where very large mergers affect the maximal external lengths. 
Then circumstances change and new techniques are required. For this reason, we have to confine ourselves to the Bolthausen-Sznitman coalescent in the case of regularly varying $\mu$ with exponent $\al=1$.   
  
\medskip

It is interesting to note that an asymptotic shift by a Gumbel distributed variable also shows up in the absorption time $\widetilde{\tau}_n$ (the moment of the most recent common ancestor) of the Bolthausen-Sznitman coalescent:
\[\widetilde{\tau}_n-\log{\log{n}} \ \stackrel{d}{\longrightarrow}\ G\]
as $n\to\infty$ (see Goldschmidt and Martin \cite{GM05}).   
However, this shift remains unscaled. Apparently, these two Gumbel distributed variables under consideration build up within different parts of the coalescent tree.   

\bigskip

Before closing this introduction, we provide some hints concerning the proofs. For the first three theorems, we make use of an asymptotic representation for the tail probabilities of the external branch lengths. 
Remarkably, this representation involves, solely, the rate of decrease $\mu$, though in a somewhat implicit, twofold manner. 
The proofs of the three theorems consist in working out the consequences of these circumstances.  
The representation is given in Theorem \ref{thm:int} and relies largely on different approximation formulae derived in \cite{DK18}. We recall the required statements in Section \ref{SLOLN}. \medskip

The proofs of the last two theorems incorporate Corollary \ref{cor} as one ingredient.  
The idea is to implement stopping times $\widetilde{\rho}_{c,n}$ with the property that at that moment a positive number of external branches is still extant which is of order 1 uniformly in $n$. 
To these remaining branches, the results of Corollary \ref{cor} are applied taking the strong Markov property into account.  
More precisely, let 
\[N_n \eq \left(N_n(t), t\geq 0\right) \] 
be the block counting process of the $n$-coalescent, where 
\[N_n(t) \ := \ \#\Pi_n(t)\] 
states the number of lineages present at time $t\geq 0$. For definiteness, we put $N_n(t)=1$ for $t>\widetilde{\tau}_n$.  
In the case of regularly varying $\mu$ with exponent $1<\al\leq 2$, we will show that 
\[\widetilde{\rho}_{c,n}\ :=\ \inf\left\{t\geq 0:\,N_n(t)\leq c s_n\right\}\]
with arbitrary $c>0$ is the right choice. 
Next, we split the external lengths $T^n_{i}$ into  
the times $\widecheck{T}^{n}_{i}$ up to the moment $\widetilde{\rho}_{c,n}$ and the residual times $\widehat{T}^{n}_{i}$. 
Formally, we have 
\[\widecheck{T}^{n}_{i}\ :=\ T^n_{i}\wedge\widetilde{\rho}_{c,n} \qquad \text{ and }\qquad \widehat{T}^{n}_i\ :=\ T^n_{i}-\widecheck{T}^{n}_{i}.\] 
We shall see that $\widecheck{T}^{n}_{i}$ is of negligible size compared to $\widehat{T}^{n}_{i}$ for large values of $c$. On the other hand, with increasing $c$, also the number of extant external branches tends to infinity uniformly in $n$. Corollary~\ref{cor} tells us that the $\widehat{T}^{n}_{i}$ behave approximately like i.i.d. random variables. Therefore, one expects that the classical extreme value theory applies in our context. These are the ingredients of the proof of Theorem~
\ref{reg_var}. 
\bigskip

\begin{figure}[h]
\center{\includegraphics[width=0.8\textwidth]{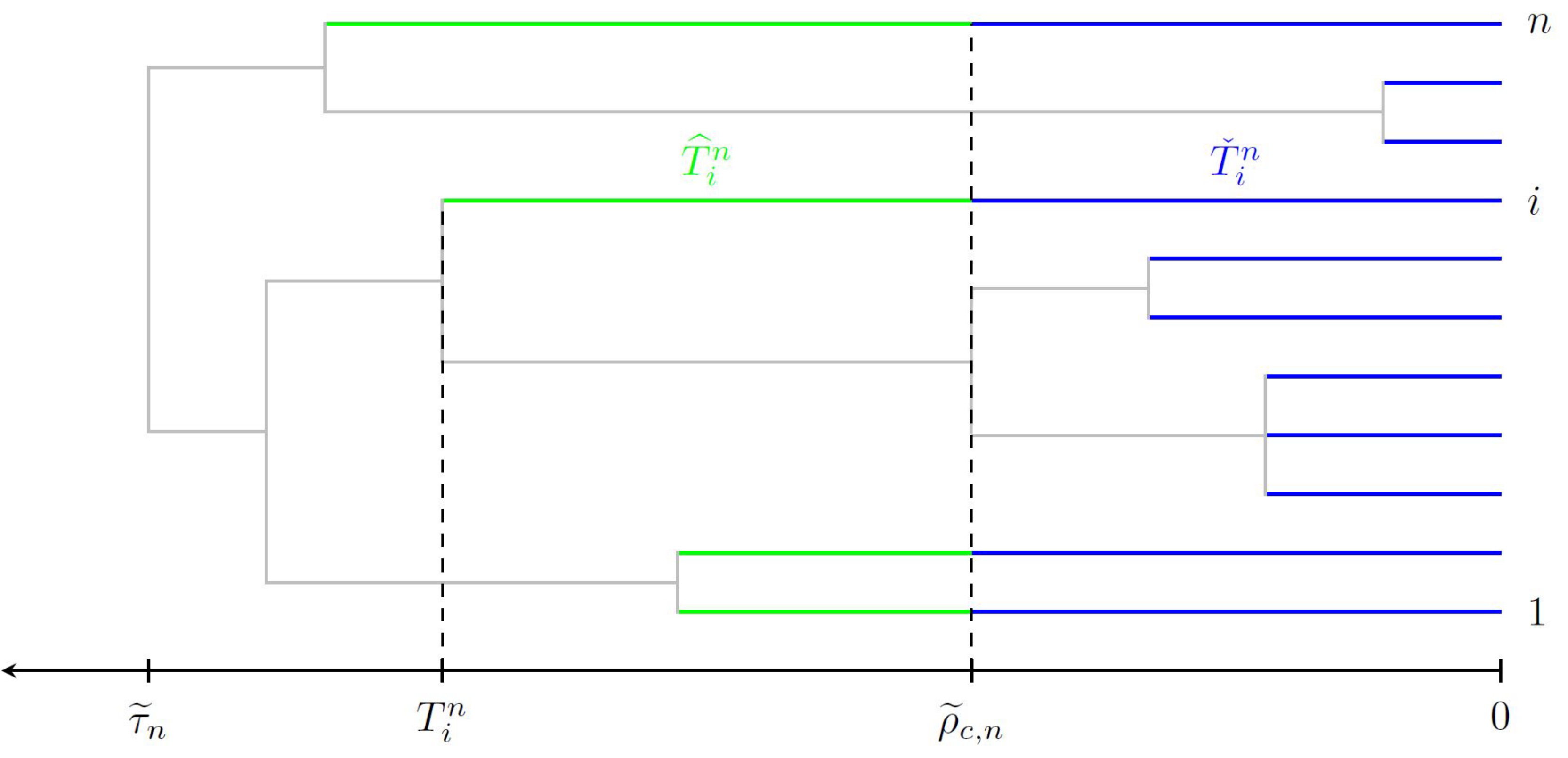}}
\caption{The stopping time $\widetilde{\rho}_{c,n}$ subdividing the external branch ending in leaf $i$ into two parts of length $\widecheck{T}_i^n$ and $\widehat{T}_i^n$, respectively.}
\end{figure}

\medskip

The approach for the Bolthausen-Sznitman coalescent is essentially the same.  However, new obstacles appear. In contrast to the previous case $\al>1$, the lengths of the maximal branches now diverge in probability. As a consequence, in the case $\alpha=1$, we have in general no longer control over the stopping times $\widetilde{\rho}_{c,n}$ as defined above. 
Fortunately, for the Bolthausen-Sznitman coalescent, Möhle \cite{Moe15} provides a precise asymptotic description of the block counting process $N_n$ by means of the Mittag-Leffler process, which applies also in the large time regime. 
Adapted to this result, the role of $\widetilde{\rho}_{c,n}$ is taken by $t_{c,n}\wedge\widetilde{\tau}_n$, where
\[t_{c,n}\ :=\ t_n-\frac{\log{c}}{\log\log{n}}\] 
for some $c>1$. 
Thus, for the Bolthausen-Sznitman coalescent, the external lengths $T^n_{i}$ are split into 
\[\widecheck{T}^{n}_{i}\ :=\ T^n_{i}\wedge t_{c,n} \qquad \text{ and }\qquad \widehat{T}^{n}_i\ :=\ T^n_{i}-\widecheck{T}^{n}_{i}.\]

In contrast to the case $\al>1$, the part $\widecheck{T}^{n}_{i}$ does not disappear for $c\to\infty$ but is asymptotically Gumbel-distributed and shows up in the above mentioned independent shift.  

\bigskip

The paper is organized as follows: In Section \ref{SLOLN} we recapitulate some laws of large numbers from \cite{DK18}. Section \ref{sec_prop} summarizes several properties of the rate of decrease. The fundamental asymptotic expression of the external tail properties is developed in Section~\ref{sec_rand}. Sections \ref{sec_proofs} and \ref{sec_proof} contain the proofs of Theorem \ref{dustless} to \ref{iff}. In Section \ref{sec_mom} we prepare the proofs of the remaining theorems by establishing a formula for factorial moments of the number of external branches. Sections \ref{sec_proof_beta} and \ref{sec_proof_bs} include the proofs of Theorem \ref{reg_var} and \ref{bs}.

\bigskip

\section{Some laws of large numbers}\label{SLOLN}

In this section we report on some laws of large numbers from the recent publication \cite{DK18}, which are a main tool in the subsequent proofs. Let $X=(X_j)_{j\in\N_0}$ denote the Markov chain embedded in the block-counting process $N_n$, i.e., $X_j$ denotes the number of branches after $j$ merging events. (For convenience, we suppress $n$ in the notation of $X$.) Also, let
\[ \rho_r :=\min \{ j\ge 0: X_j \le r\} \]
 for numbers $r>0$.  We are dealing with laws of large numbers  for  functionals of the form
\[ \sum_{j=0}^{\rho_{r_n}-1} f(X_j)  \]
with some suitable positive function $f$ and some sequence $(r_n)_{n \ge 1}$  of positive numbers.
These laws of large numbers  build on two approximation steps. First, letting
\[ \Delta X_{j+1}:= X_{j}-X_{j+1} \quad \text{and}\quad \nu(b):= \mathbb E[\Delta X_{j+1} \mid X_{j}=b] \]
for $j \ge 1$, we notice that for large $n$, 
\[ \sum_{j=0}^{\rho_{r}-1}f(X_j) \approx  \sum_{j=0}^{\rho_{r}-1} f(X_j)\frac {\Delta X_{j+1}}{\nu(X_j)} .\]
The rationale of this approximation consists in the observation that the difference of both sums stems from the martingale difference sequence $f(X_j)(1-\Delta X_{j+1}/\nu(X_j))$, $j \ge 0$, and, thus, is of a comparatively  negligible order. Second, we  remark that 
\[  \sum_{j=0}^{\rho_{r}-1}\frac { f(X_j)}{\nu(X_j)}\Delta X_{j+1} \approx \int_{r}^n \frac {f(x)}{\nu(x)}\, dx,\]
with $\nu(x)$ extending the numbers $\nu(b)$ to  real numbers $x \ge 2$. Here, we regard the left-hand sum as a Riemann approximation of the right-hand integral and take $X_{\rho_{r}} \approx r$ into account. Altogether,
\[\sum_{i=0}^{\rho_{r}-1} f(X_i)  \approx \int_{r}^n \frac {f(x)}{\nu(x)} .\]

\medskip

In order to estimate the errors and, in particular, the martingale's quadratic variation, different assumptions are required. For details we refer to \cite{DK18} and deal here only with the two cases that we use later in our proofs.

\medskip

The first case concerns the time 
\[\widetilde \rho_{r}:= \inf\{ t\ge 0: N_n(t)\loe r\},\] 
when the block-counting process drops below $r$. Letting $W_j$ be the period of stay of $N_n$ at state $X_j$ (again suppressing $n$ in the notation), we have 
\[ \widetilde \rho_{r} = \sum_{j=0}^{\rho_{r}-1} W_j \approx \sum_{j=0}^{\rho_r-1}\mathbb E[W_j \mid N_n]=\sum_{j=0}^{\rho_{r}-1} \frac 1{\lambda(X_j)},\]
where $\lambda(b):= \sum_{2\le k \le b} \lambda_{b,k}$ is the jump rate of the block counting process. Also, $\nu(b)= \mu(b)/\lambda(b)$. Therefore, putting $f(x)=\lambda(x)^{-1}$, we are led to the approximation formula
\[  \rho_{r} \approx \int_r^n \frac{dx}{\mu(x)} \ . \]
More precisely, we have the following law of large numbers.

\begin{prop} \label{ErgLem1}
Assume that the $\Lambda$-coalescent is dustless. 
Let $\gamma < 1$ and let $2 \le r_n \le \gamma n$, $n \ge 1$, be numbers such that
\[ \int_{r_n}^n \frac{dx}{\mu(x)} \to 0 \]
as $n \to \infty$. Then
\[ \widetilde \rho_{r_n} = (1+ \oo_P(1)) \int_{r_n}^n \frac{dx}{\mu(x)} \]
as $n\to \infty$.
\end{prop}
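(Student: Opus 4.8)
The goal is to establish the law of large numbers $\widetilde\rho_{r_n} = (1+\oo_P(1))\int_{r_n}^n dx/\mu(x)$. The paper lays out a clean three-step heuristic, and I would follow exactly that skeleton, turning each approximate equality into a rigorous estimate. The key decomposition is
\[
\widetilde\rho_{r_n} \eq \sum_{j=0}^{\rho_{r_n}-1} W_j,
\]
so my task is to compare this sum first to its conditional-mean counterpart $\sum_{j}\lambda(X_j)^{-1}$, and then to compare that sum to the deterministic integral $\int_{r_n}^n dx/\mu(x)$. Let me think about each comparison in turn.

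**Step 1: the waiting-time fluctuations.**
Given the whole chain $X$, the holding times $W_j$ are conditionally independent exponentials with $\E[W_j\mid N_n] = \lambda(X_j)^{-1}$. First I would write $\widetilde\rho_{r_n} = \sum_j \lambda(X_j)^{-1} + \sum_j (W_j - \lambda(X_j)^{-1})$ and control the second (centered) sum. Conditionally on $X$, its variance is $\sum_j \lambda(X_j)^{-2}$, so the natural route is a conditional second-moment (Chebyshev) argument: I would show $\sum_j \lambda(X_j)^{-2} = \oo_P\big((\int_{r_n}^n dx/\mu(x))^2\big)$. Since $\nu(b)=\mu(b)/\lambda(b)$ and the dustless assumption forces $\mu(b)/b\to\infty$, the increments $\nu(X_j)$ grow, which makes successive terms $\lambda(X_j)^{-1}$ comparatively small relative to the running total; this is precisely the gain that kills the variance term. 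I expect this to go through via the approximation formulae from \cite{DK18} that are invoked in the excerpt.

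**Step 2: the Riemann/martingale approximation.**
The remaining task is to show
\[
\sum_{j=0}^{\rho_{r_n}-1}\frac{1}{\lambda(X_j)} \eq (1+\oo_P(1))\int_{r_n}^n\frac{dx}{\mu(x)}.
\]
Following the excerpt, I would insert the factor $\Delta X_{j+1}/\nu(X_j)$, which has conditional mean $1$, so that
\[
\sum_j \frac{1}{\lambda(X_j)} \ = \ \sum_j \frac{1}{\lambda(X_j)}\frac{\Delta X_{j+1}}{\nu(X_j)} \ + \ \sum_j \frac{1}{\lambda(X_j)}\Big(1-\frac{\Delta X_{j+1}}{\nu(X_j)}\Big).
\]
The first sum telescopes into a Riemann approximation of $\int \nu(x)^{-1}\lambda(x)^{-1}\,dx = \int \mu(x)^{-1}\,dx$ once I use $X_{\rho_{r_n}}\approx r_n$ and pass from the lattice sum to the integral; the second sum is a martingale difference series whose conditional quadratic variation I bound, again by a second-moment argument, using the growth of $\nu$. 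In both places the hypothesis $\int_{r_n}^n dx/\mu(x)\to 0$ is what guarantees the error terms are of smaller order than the main term (rather than merely bounded).

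**The main obstacle.**
I expect the crux to be Step 2 — specifically, controlling the Riemann-sum error and the martingale's quadratic variation simultaneously, and verifying that the boundary correction at $X_{\rho_{r_n}}$ (where the chain may overshoot $r_n$ in a single large merger) is negligible. Overshoot is genuinely delicate for $\Lambda$-coalescents with multiple mergers, since $\Delta X_{j+1}$ can be large; one must argue that near level $r_n$ the overshoot contributes only a lower-order amount to the integral. Here I would lean heavily on the quantitative error estimates of \cite{DK18} rather than re-deriving them, since the excerpt explicitly signals that these laws of large numbers ``build on'' that paper's approximation machinery. The dustless condition and the constraint $r_n\le\gamma n$ with $\gamma<1$ should together provide the uniform control needed to make all error terms vanish.
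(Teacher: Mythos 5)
Your proposal is correct and follows essentially the same route as the paper: the paper itself only sketches this proposition via the two-step approximation you describe (holding-time fluctuations controlled by a conditional second-moment argument, then the $\Delta X_{j+1}/\nu(X_j)$ insertion splitting into a martingale-difference error and a Riemann approximation of $\int_{r_n}^n dx/\mu(x)$), and defers the rigorous error estimates to Section 3 of \cite{DK18}, exactly as you do. Your reading of the hypotheses also matches the paper's own commentary --- the condition $\int_{r_n}^n dx/\mu(x)\to 0$ keeps the process in the small-time regime so that large jumps do not ruin the Riemann approximation, while $r_n\le\gamma n$ ensures enough merging events for the law of large numbers to take hold.
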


\newpage

The role of the assumptions is easily understood: The condition $\int_{r_n}^n \frac{dx}{\mu(x)} \to 0$ implies that $\tilde \rho_{r_n}\to 0$ in probability, i.e., we are in the small time regime. This is required to avoid very large jumps $\Delta X_{j+1}$ of order $X_{j+1}$, which  would ruin the above Riemann approximation. The condition $r_n \le \gamma n$ guarantees that $\widetilde \rho_{r_n}$ is sufficiently large to allow for a law of large numbers.

\medskip

Secondly, we turn to the case $f(x)=x^{-1}$. Here we point out that as $x \to \infty$,
\[ \frac 1{\nu(x)} \sim x \frac d{dx} \log \frac{\mu(x)}x, \]
which follows from \cite[Lemma 1 (ii)]{DK18}. Therefore,
\[ \int_r^n \frac{dx}{x\nu(x)} \approx \log \Big(\frac{\mu(n) }{n}\frac r{\mu(r)}\Big)\,, \]
and we have the following law of large numbers.

\begin{prop}\label{ErgLem2}
Under the assumptions of the previous proposition, we have
\[ \sum_{j=0}^{\rho_{r_n}-1} \frac 1{X_j} = (1+ \oo_P(1))\log \Big(\frac{\mu(n) }{n}\frac {r_n}{\mu(r_n)}\Big) \quad \text{and} \quad \sum_{j=0}^{\rho_{r_n}-1} \frac 1{X_j} = \log \Big(\frac{\mu(n) }{n}\frac {r_n}{\mu(r_n)}\Big) + \oo_P(1) \]
as $n \to \infty$.
\end{prop}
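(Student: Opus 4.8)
The plan is to follow the two-step approximation scheme set out above for the functional $\sum_{j=0}^{\rho_{r_n}-1} f(X_j)$ with the specific choice $f(x)=1/x$, exactly as for Proposition~\ref{ErgLem1} with $f(x)=1/\lambda(x)$; since the hypotheses are identical, the same error-control machinery from \cite{DK18} is available. Write $\mathcal F_j=\sigma(X_0,\dots,X_j)$ and recall $\mathbb E[\Delta X_{j+1}\mid\mathcal F_j]=\nu(X_j)$. First I would pass from $\sum_j 1/X_j$ to the compensated sum $\sum_j \frac{1}{X_j}\frac{\Delta X_{j+1}}{\nu(X_j)}$, the difference being the terminal value $M_{\rho_{r_n}}$ of the martingale
\[ M_m \eq \sum_{j=0}^{(m\wedge\rho_{r_n})-1} \frac{1}{X_j}\Bigl(1-\frac{\Delta X_{j+1}}{\nu(X_j)}\Bigr). \]
Its predictable quadratic variation equals $\sum_{j=0}^{\rho_{r_n}-1} \frac{1}{X_j^2\nu(X_j)^2}\,\V(\Delta X_{j+1}\mid X_j)$, and the task is to show this tends to $0$ in probability, so that $M_{\rho_{r_n}}=\oo_P(1)$.

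Second, I would treat the compensated sum as a Riemann--Stieltjes approximation. Setting $G(x):=\log\frac{\mu(x)}{x}$, Lemma~1\,(ii) of \cite{DK18} gives $\frac{1}{x\nu(x)}\sim G'(x)$, so that
\[ \sum_{j=0}^{\rho_{r_n}-1}\frac{\Delta X_{j+1}}{X_j\nu(X_j)} \ \approx\ \sum_{j=0}^{\rho_{r_n}-1} G'(X_j)(X_j-X_{j+1}) \ \approx\ \sum_{j=0}^{\rho_{r_n}-1}\bigl(G(X_j)-G(X_{j+1})\bigr), \]
which telescopes to $G(X_0)-G(X_{\rho_{r_n}})=\log\frac{\mu(n)}{n}-\log\frac{\mu(X_{\rho_{r_n}})}{X_{\rho_{r_n}}}$. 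Replacing the stopped value $X_{\rho_{r_n}}$ by $r_n$ in the last term then produces the claimed leading term $\log\bigl(\frac{\mu(n)}{n}\frac{r_n}{\mu(r_n)}\bigr)$. Three error terms are generated along the way: the substitution $\frac{1}{x\nu(x)}\to G'(x)$, the second-order Taylor remainder $\tfrac12\sum_j G''(\xi_j)(\Delta X_{j+1})^2$ in the telescoping, and the overshoot $G(X_{\rho_{r_n}})-G(r_n)$ at the boundary.

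The main obstacle is that all of these errors must be controlled in absolute value to obtain the additive estimate, the second assertion, since in the applications $r_n=\oo(n)$ forces the leading term $L_n:=\log\bigl(\frac{\mu(n)}{n}\frac{r_n}{\mu(r_n)}\bigr)$ to diverge, so that a merely multiplicative bound does not suffice. Concretely I would have to show: the quadratic variation above is $\oo_P(1)$; the second-order remainder is $\oo_P(1)$, where the small-time hypothesis $\int_{r_n}^n dx/\mu(x)\to 0$ is essential, since it rules out jumps $\Delta X_{j+1}$ of order $X_j$ that would destroy the Taylor approximation near the boundary $r_n$; and a sharpened, integrable-tail form of Lemma~1\,(ii) making both $\sum_j\bigl(\frac{1}{X_j\nu(X_j)}-G'(X_j)\bigr)\Delta X_{j+1}$ and the overshoot $\oo_P(1)$ rather than merely $\oo_P(L_n)$. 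Each of these relies on the quantitative estimates of $\nu(b)$ and of $\V(\Delta X_{j+1}\mid X_j=b)$ for dustless coalescents established in \cite{DK18}.

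The multiplicative form, the first assertion, is the softer output of the very same scheme: it only requires the errors to be of smaller order than the leading term, and it remains informative precisely in the regime $L_n\to 0$, where the additive form degenerates. I would therefore record the multiplicative statement as the direct conclusion of the law of large numbers and then upgrade to the additive statement by the absolute $\oo_P(1)$ bounds described above.
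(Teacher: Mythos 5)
Your proposal is correct and takes essentially the same route as the paper: the paper's own ``proof'' of this proposition consists precisely of the two-step scheme you describe --- compensating $\sum_{j=0}^{\rho_{r_n}-1} 1/X_j$ by the martingale differences $\frac{1}{X_j}\bigl(1-\Delta X_{j+1}/\nu(X_j)\bigr)$, then performing the Riemann/telescoping approximation based on $\frac{1}{x\nu(x)} \sim \frac{d}{dx}\log\frac{\mu(x)}{x}$ from \cite[Lemma 1 (ii)]{DK18} --- followed by a citation of \cite[Section 3]{DK18} for the rigorous error control. Your deferral of the quantitative bounds (quadratic variation, Taylor remainder, boundary overshoot) to the estimates on $\nu(b)$ and $\V(\Delta X_{j+1}\mid X_j=b)$ in \cite{DK18} mirrors exactly what the paper does.
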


For the proofs of these propositions, see \cite[Section 3]{DK18}.

\bigskip

\section{Properties of the rate of decrease}\label{sec_prop}
We now have a closer look at the rate of decrease $\mu$ introduced in the first section. Defining
\begin{align}\label{mu_def}
\mu(x) \ :=\ \int_{[0,1]}\left(xp-1+(1-p)^x\right)\frac{\Lambda(dp)}{p^2}, 
\end{align}
we extent $\mu$ to all real values $x\geq 1$, where the integrand's value at $p=0$ is understood to be $x(x-1)/2$. 

\medskip

The next lemma summarizes some required properties of $\mu$.

\begin{lem} \label{prop}
The rate of decrease and its derivatives have the following properties:
\begin{enumerate}
\item $\mu(x)$ has derivatives of any order with finite values, also at $x=1$. Moreover, $\mu$ and $\mu'$ are both non-negative and strictly increasing, while $\mu''$ is a non-negative and decreasing function. 
\item For $1<x\leq y$, 
\begin{align*}
\frac{x(x-1)}{y(y-1)}\loe\frac{\mu(x)}{\mu(y)}\loe \frac{x}{y}\,.
\end{align*}
\item For $x> 1$, 
\begin{align*}
\mu'(1) \loe \frac{\mu(x)}{x-1} \loe \mu'(x)\qquad \text{ and }\qquad  \mu''(x) \loe \frac{\mu'(x)}{x-1}.
\end{align*}  
\item In the dustless case, 
\[\frac{\mu(x)}{x} \ \rightarrow \ \infty \]
as $x\to\infty$. 
\end{enumerate}
\end{lem}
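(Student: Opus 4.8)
The plan is to argue directly from the integral representation \eqref{mu_def} and to trace the divergence of $\mu(x)/x$ back to the defining condition $\int_{[0,1]}\Lambda(dp)/p=\infty$ of the dustless case. First I would divide \eqref{mu_def} by $x$ and write
\[\frac{\mu(x)}{x}\eq\int_{[0,1]}h(x,p)\,\Lambda(dp),\qquad h(x,p):=\frac{xp-1+(1-p)^x}{xp^2},\]
where, in accordance with the convention fixed after \eqref{mu_def}, the integrand at $p=0$ is read as $h(x,0)=(x-1)/2$. The whole argument then rests on applying Fatou's lemma to the family $h(x,\cdot)$ as $x\to\infty$.

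Two ingredients prepare this. First I would check non-negativity: Bernoulli's inequality $(1-p)^x\goe 1-xp$, valid for $x\goe 1$ and $p\in[0,1]$, gives $xp-1+(1-p)^x\goe 0$, hence $h(x,p)\goe 0$ everywhere, which is what licenses Fatou without a dominating function. Second I would identify the pointwise limit: for fixed $p\in(0,1]$ one has $(1-p)^x\to 0$, so the numerator is asymptotically $xp$ and thus $h(x,p)\to 1/p$ as $x\to\infty$; at $p=0$ the value $h(x,0)=(x-1)/2$ diverges, matching the convention $1/p=\infty$ at $p=0$. Hence $h(x,\cdot)$ converges pointwise on $[0,1]$ to the function $p\mapsto 1/p$.

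Fatou's lemma applied to the non-negative integrands then yields
\[\liminf_{x\to\infty}\frac{\mu(x)}{x}\goe\int_{[0,1]}\lim_{x\to\infty}h(x,p)\,\Lambda(dp)\eq\int_{[0,1]}\frac{\Lambda(dp)}{p}\eq\infty,\]
the last equality being precisely the dustless hypothesis; since the left-hand side is bounded below by $\infty$, the limit exists and equals $\infty$, as claimed. I expect the only delicate point to be the bookkeeping at $p=0$: one must verify that the prescribed integrand value $x(x-1)/2$ at $p=0$ produces exactly $h(x,0)=(x-1)/2$ and that its divergence is consistent with the convention $1/0=\infty$ used when reading $\int_{[0,1]}\Lambda(dp)/p=\infty$. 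With this convention a single application of Fatou absorbs both a possible Kingman atom at $0$ and the pure-jump part of $\Lambda$, so that no case distinction between $\Lambda(\{0\})>0$ and $\Lambda(\{0\})=0$ is needed. Monotonicity of $x\mapsto\mu(x)/x$, already contained in part (ii), is consistent with this but does not by itself force divergence, which is why the integral input is essential.
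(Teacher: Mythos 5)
Your Fatou argument is correct as far as it goes, but it proves only part (iv) of the lemma, and that is the genuine gap: the statement also asserts (i) that $\mu$ has finite derivatives of all orders with $\mu,\mu'$ non-negative and strictly increasing and $\mu''$ non-negative and decreasing, (ii) the two-sided ratio bounds $\frac{x(x-1)}{y(y-1)}\leq\frac{\mu(x)}{\mu(y)}\leq\frac{x}{y}$, and (iii) the bounds $\mu'(1)\leq\frac{\mu(x)}{x-1}\leq\mu'(x)$ and $\mu''(x)\leq\frac{\mu'(x)}{x-1}$. None of these is addressed; you even invoke part (ii) (``already contained in part (ii)'') as though it were available, but it is one of the claims to be proved. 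These parts need their own arguments: in the paper, (i) comes from explicit integral representations $\mu'(x)=\int_{[0,1]}\bigl((1-p)^x\log(1-p)+p\bigr)p^{-2}\Lambda(dp)$ and $\mu''(x)=\int_{[0,1]}(1-p)^x\log^2(1-p)\,p^{-2}\Lambda(dp)$, whose signs and monotonicity are read off directly; (ii) is equivalent to $\mu(x)/x$ being increasing and $\mu(x)/(x(x-1))$ decreasing, which rests on the partial-integration formulae (7) and (8) of \cite{DK18}; and (iii) follows from (i), $\mu(1)=0$, and elementary comparisons of $\int_1^x\mu'(y)dy$ with its endpoint values. As a proof of the lemma, the proposal is therefore missing three of its four assertions.

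For part (iv) itself, your argument is sound and takes a genuinely different route from the paper, which at this point simply cites Lemma 1 (iii) of \cite{DK18}. The Bernoulli inequality $(1-p)^x\geq 1-xp$ gives non-negativity of the integrand, which is exactly what licenses Fatou with no dominating function; the pointwise limit $h(x,p)\to 1/p$ for $p\in(0,1]$, together with $h(x,0)=(x-1)/2\to\infty$ matching the convention at a possible atom of $\Lambda$ at zero, is correct; and the conclusion $\liminf_{x\to\infty}\mu(x)/x\geq\int_{[0,1]}p^{-1}\Lambda(dp)=\infty$ is precisely the dustless hypothesis. This buys a self-contained, measure-theoretic proof of (iv) that treats the Kingman atom and the rest of $\Lambda$ uniformly and avoids any appeal to the companion paper. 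If you supplement it with proofs of (i)--(iii), the result would be a complete, and in part more self-contained, proof of the lemma.
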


\begin{proof}
(i) Let 
\[\mu_2(x) \ := \ \int_{[0,1]}(1-p)^x\log^2{(1-p)}\frac{\Lambda(dp)}{p^2},\]
which is a $\mathcal{C}^\infty$-function for $x>0$. Set
\begin{align*}
\mu_1(x)\ := \ & \int_1^x \mu_2(y)dy + \int_{[0,1]}\left(p+(1-p)\log{(1-p)}\right)\frac{\Lambda(dp)}{p^2}\\
\eq &  \int_{[0,1]}\left((1-p)^x\log{(1-p)}+p\right)\frac{\Lambda(dp)}{p^2}\,.
\end{align*}
Note that the second integral in the first line is finite and non-negative just as its integrand. Then we have
\[\mu(x) \eq \int_{1}^x\mu_1(y)dy.\]
Thus, $\mu_1(x)=\mu'(x)$ and $\mu_2(x)=\mu''(x)$ for $x\geq 1$. From these formulae our claim follows.  

\medskip

(ii) The inequalities are equivalent to the fact that $\mu(x)/x$ is increasing and $\mu(x)/(x(x-1))$ is decreasing as follows from formulae (7) and (8) of \cite{DK18}. \medskip

(iii) The monotonicity properties from (i) and $\mu(1)=0$ yield for $x\geq 1$, 
\[\mu'(1)(x-1) \loe \mu(1) + \int_1^x\mu'(y)dy \loe \mu'(x)(x-1).\]
Similarly, we get $\mu''(x)(x-1)\leq\mu'(x)$ because $\mu'(1)\geq 0$. \medskip

(iv) See Lemma 1 (iii) of \cite{DK18}. 
\end{proof}

\bigskip

In order to characterize regular variation of $\mu$, we introduce the function
\[H(u)\ :=\ \frac{\Lambda{\left(\left\{0\right\}\right)}}{2}+\int_0^uh(z)dz\,, \quad 0\leq u\leq 1,\]
where
\[h(z)\ :=\ \int_z^1\int_{\left(y,1\right]}\frac{\Lambda\left(dp\right)}{p^2}dy\,, \quad 0\leq z\leq 1.\]
Note that $H$ is a finite function because we have
\begin{align}\label{H(1)}
H(1)\eq\frac{\Lambda{\left(\left\{0\right\}\right)}}{2}+\int_0^1 \int_0^p\int_0^y dz\,dy\,\frac{\Lambda(dp)}{p^2} \eq \frac{\Lambda\left(\left[0,1\right]\right)}{2}\ <\ \infty\,.
\end{align}

\medskip

\begin{prop}\label{reg_coa} 
For a $\Lambda$-coalescent without a dust component, the following statements hold: 
\begin{enumerate}
\item
$\mu(x)$ is regularly varying at infinity if and only if $H(u)$ is regularly varying at the origin. 
Then $\mu$ has an exponent $\alpha\in[1,2]$ and we have 
\begin{equation}\label{H}
\mu(x)\ \sim\ \Gamma(3-\alpha)\,x^{2}\,H\left(x^{-1}\right)
\end{equation}
as $x\to\infty$. 
\item $\mu(x)$ is regularly varying at infinity with some exponent $\alpha\in(1,2)$ if and only if the function $\int_{(y,1]}p^{-2}\Lambda(dp)$ is regularly varying at the origin with an exponent $\alpha\in(1,2)$. Then we have
\[\mu(x)\ \sim\ \frac{\Gamma(2-\alpha)}{\alpha-1} \int_{x^{-1}}^1\frac{\Lambda(dp)}{p^2}\]
as $x\to\infty$.
\end{enumerate}
\end{prop}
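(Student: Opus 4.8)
The plan is to reduce both statements to the Karamata theory of regular variation by first converting the definition \eqref{mu_def} into an exact integral representation that exhibits $\mu(x)$ as a Laplace-type transform of $H$. Concretely, I would first prove the identity
\[
\mu(x) \eq x(x-1)(x-2)\int_0^1 H(p)\,(1-p)^{x-3}\,dp, \qquad x>2.
\]
To obtain it, I split off the atom at $0$, which contributes $\tfrac{x(x-1)}{2}\Lambda(\{0\})=x(x-1)H(0)$, and integrate the remaining integral over $(0,1]$ by parts. Writing $\overline\Lambda(p):=\int_{(p,1]}q^{-2}\Lambda(dq)$, so that $\Lambda(dp)/p^2=-d\overline\Lambda(p)$, $\,\overline\Lambda=-h'$ and $h=H'$, a first integration by parts replaces the integrand $xp-1+(1-p)^x$ by its derivative $x(1-(1-p)^{x-1})$ paired with $\overline\Lambda$; a second one pairs $h$ with the weight $x(x-1)(1-p)^{x-2}$; a third, absorbing the atom term, produces the weight against $H$ displayed above. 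All boundary terms vanish: near $p=1$ because $(1-p)^{x-3}\to0$ for $x>2$, and near $p=0$ because $xp-1+(1-p)^x=O(p^2)$ while $p^2\overline\Lambda(p)\to0$, and because $p\,h(p)\to0$ in the dustless case, which is transparent from the Fubini identity $h(p)=\int_{(p,1]}q^{-1}\Lambda(dq)-p\,\overline\Lambda(p)$.

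Next I would substitute $t=-\log(1-p)$ to rewrite this as $\mu(x)=x(x-1)(x-2)\int_0^\infty \widetilde H(t)\,e^{-(x-2)t}\,dt$, where $\widetilde H(t):=H(1-e^{-t})$ is non-decreasing and satisfies $\widetilde H(t)\sim H(t)$ as $t\to0$. Thus $\mu(x)/(x(x-1)(x-2))$ is exactly the Laplace transform of the monotone function $\widetilde H$ evaluated at $x-2$, and part (i) becomes an instance of Karamata's Tauberian theorem at the origin: for $\rho\ge0$ and slowly varying $\ell$, one has $\widetilde H(t)\sim t^{\rho}\ell(1/t)$ as $t\to0$ if and only if the transform is asymptotic to $\Gamma(\rho+1)\lambda^{-\rho-1}\ell(\lambda)$ as $\lambda\to\infty$. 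The monotonicity of $\widetilde H$ supplies the (hard) Tauberian direction. Taking $\rho=2-\alpha$ turns the Tauberian constant $\Gamma(\rho+1)$ into $\Gamma(3-\alpha)$ and yields $\mu(x)\sim\Gamma(3-\alpha)\,x^2H(1/x)$. The range $\alpha\in[1,2]$ is then pinned down by two order-of-magnitude facts: $\rho\ge0$ (hence $\alpha\le2$) because $H$ is non-decreasing and bounded, $H(1)=\Lambda([0,1])/2<\infty$; and $\rho\le1$ (hence $\alpha\ge1$) because dustlessness forces $h(0)=\int_{(0,1]}p^{-1}\Lambda(dp)=\infty$, so that $H(u)/u\to\infty$ and $H$ cannot decay as fast as $u$.

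For part (ii) I would restrict to $\alpha\in(1,2)$, where $\Lambda(\{0\})=0$, $H(0)=0$, and hence $H(u)=\int_0^u h(z)\,dz$ with $h(z)=\int_z^1\overline\Lambda(y)\,dy$. Here the functions $\overline\Lambda$ and $h$ are both monotone, so Karamata's theorem on integrals (for the direct implication) together with the monotone density theorem (for the converse, differentiating through the two integrals) give the chain of equivalences: $\overline\Lambda$ regularly varying of index $-\alpha$ $\iff$ $h$ of index $1-\alpha$ $\iff$ $H$ of index $2-\alpha$, with the explicit constants $h(z)\sim\tfrac{1}{\alpha-1}z\,\overline\Lambda(z)$ and $H(u)\sim\tfrac{1}{(\alpha-1)(2-\alpha)}\,u^{2-\alpha}\ell(1/u)$. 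Feeding $H(1/x)$ into the formula of part (i) and simplifying via $\Gamma(3-\alpha)=(2-\alpha)\Gamma(2-\alpha)$ collapses the prefactor to $\tfrac{\Gamma(2-\alpha)}{\alpha-1}$, which produces $\mu(x)\sim\tfrac{\Gamma(2-\alpha)}{\alpha-1}\int_{1/x}^1 p^{-2}\Lambda(dp)$.

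The main obstacle is twofold and lies entirely in the first half of the argument. Establishing the integral representation rigorously requires justifying that every boundary term vanishes uniformly across the whole range $\alpha\in[1,2]$ and handling the atom at $0$ correctly (it survives only in the limiting case $\alpha=2$, where it becomes the leading term of the Laplace transform). Equally delicate is invoking the correct \emph{origin} version of Karamata's Tauberian theorem and using the monotonicity of $\widetilde H$ to obtain the genuinely Tauberian implication ($\mu$ regularly varying $\Rightarrow$ $H$ regularly varying), since it is precisely there that one extracts not merely the right order of magnitude but the exact constant $\Gamma(3-\alpha)$; once that constant is secured, part (ii) is a routine double application of Karamata's integral and monotone density theorems.
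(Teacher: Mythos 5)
Your proposal is correct, and its overall strategy coincides with the paper's: part (i) is in both cases a Tauberian argument that exhibits $\mu(x)/x^2$ as (asymptotically) a Laplace transform of $H$ and then invokes Karamata/Feller, and part (ii) is in both cases a twice-iterated differentiation argument --- your ``Karamata integral theorem plus monotone density theorem'' is precisely the content of the paper's Lemma \ref{deriv}, applied to the pairs $(H,h)$ and $\bigl(h,\int_{(\cdot,1]}p^{-2}\Lambda(dp)\bigr)$, with the same constants and the same simplification $\Gamma(3-\alpha)=(2-\alpha)\Gamma(2-\alpha)$. Where you genuinely deviate is in the reduction step of (i). The paper stops at the identity \eqref{mu}, i.e.\ $\mu(x)/(x(x-1))=\Lambda(\{0\})/2+\int_0^1(1-z)^{x-2}h(z)\,dz$, treats a possible atom at $0$ separately, and then replaces the kernel $(1-z)^{x-2}$ by $e^{-zx}$ through an asymptotic comparison, splitting the integral at $z=x^{-\delta}$ with $\tfrac12<\delta<1$. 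You instead push the integration by parts one step further to the exact identity $\mu(x)=x(x-1)(x-2)\int_0^1H(p)(1-p)^{x-3}\,dp$ (which indeed follows from \eqref{mu} in a single additional step, the boundary term at $p=0$ cancelling the atom) and then convert it by the substitution $t=-\log(1-p)$ into an \emph{exact} Laplace transform of the monotone function $\widetilde H(t)=H(1-e^{-t})$. This buys you a uniform treatment of the atom and dispenses with the paper's kernel-comparison estimates, at the small price of the sandwich argument needed to transfer regular variation between $\widetilde H$ and $H$ (via $H(t(1-t/2))\leq\widetilde H(t)\leq H(t)$), which is easy but should be said, since for non-regularly-varying monotone $H$ the relation $\widetilde H(t)\sim H(t)$ can fail. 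One point to tighten: in the Tauberian direction ($\mu$ regularly varying $\Rightarrow$ $H$ regularly varying) you must know $1\leq\alpha\leq2$ \emph{before} applying the theorem with $\rho=2-\alpha\geq0$, whereas your pinning of the range is phrased in terms of $\rho$ and thus presupposes regular variation of $H$. This is repaired from the same two facts read on the $\mu$-side: your representation gives $\mu(x)\leq x(x-1)H(1)$, hence $\alpha\leq2$, and dustlessness gives $\mu(x)/x\to\infty$ (Lemma \ref{prop} (iv)), hence $\alpha\geq1$ --- which is how the paper argues, via the monotonicity statements of Lemma \ref{prop} (ii).
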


The last statement brings the regular variation of $\mu$ together with the notion of regularly varying $\Lambda$-coalescents as introduced in \cite{DK18}.
  
\medskip

For the proof of this proposition, we apply the following characterization of regular variation. 

\begin{lem}\label{deriv}
Let $V(z)$, $z>0$, be a positive function with an ultimately monotone derivative $v(z)$ and let $\eta\neq 0$. 
Then $V$ is regularly varying at the origin with exponent $\eta$ if and only if $\left|v\right|$ is regularly varying at the origin with exponent $\eta-1$ and 
\begin{align*}
z\,v(z)\ \sim \ \eta\, V(z) 
\end{align*}
as $z\to 0^+$.
\end{lem}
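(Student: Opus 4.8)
The plan is to recognize Lemma~\ref{deriv} as a \emph{monotone density theorem} phrased at the origin: an equivalence between regular variation of an antiderivative $V$ and that of its ultimately monotone derivative $v$. One could substitute $z\mapsto 1/z$ and reduce everything to the classical monotone density theorem at infinity (Bingham--Goldie--Teugels), but it is just as quick to argue directly at $0^+$, which is what I would do. I would split the proof into the (easy) sufficiency direction and the (substantive) necessity direction, and first reduce to the case where $v$ keeps a fixed sign: since $V>0$ and $\eta\neq 0$, the asserted relation $zv(z)\sim\eta V(z)$ forces $\operatorname{sgn}v=\operatorname{sgn}\eta$ near the origin, so on a neighbourhood $(0,\delta)$ we may treat $v$ as monotone of a fixed sign and pass freely between $v$ and $|v|$.

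For the direction ``$\Leftarrow$'' I would argue as follows. Assume $|v|$ is regularly varying at $0$ with exponent $\eta-1$ and $zv(z)\sim\eta V(z)$. Then $z|v(z)|$ is regularly varying at $0$ with exponent $\eta$, and because $\eta\neq 0$ the relation $zv(z)\sim\eta V(z)$ gives $V(z)\sim z v(z)/\eta$. Since asymptotic equivalence preserves regular variation, $V$ inherits regular variation at $0$ with exponent $\eta$. This direction is essentially immediate once the sign bookkeeping above is in place.

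The core is the direction ``$\Rightarrow$''. Assume $V$ is regularly varying at $0$ with exponent $\eta$ and $v$ is monotone on some $(0,\delta)$; say $v$ is nondecreasing (the nonincreasing case is symmetric). For a fixed $\lambda\in(0,1)$ and small $z$ I would sandwich the increment
\[ (1-\lambda)z\,v(\lambda z)\ \leq\ V(z)-V(\lambda z)\eq\int_{\lambda z}^{z}v(t)\,dt\ \leq\ (1-\lambda)z\,v(z). \]
Dividing by $V(z)$ and using $V(\lambda z)/V(z)\to\lambda^{\eta}$ as $z\to 0^+$, the right inequality yields $\liminf_{z\to0^+} zv(z)/V(z)\geq (1-\lambda^{\eta})/(1-\lambda)$. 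For the matching upper bound I would apply the substitution $w=\lambda z$ to the left inequality and use the ratio asymptotics $V(w)/V(w/\lambda)\to\lambda^{\eta}$, obtaining $\limsup_{z\to0^+} zv(z)/V(z)\leq \lambda^{1-\eta}(1-\lambda^{\eta})/(1-\lambda)$. Letting $\lambda\to1^-$ collapses both bounds to $\eta$ (since $(1-\lambda^{\eta})/(1-\lambda)\to\eta$), which proves $zv(z)\sim\eta V(z)$. Regular variation of $|v|$ with exponent $\eta-1$ then follows by writing $|v(z)|\sim|\eta|\,V(z)/z$ and noting that $V(z)/z$ is regularly varying at $0$ with exponent $\eta-1$.

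I expect the main obstacle to be the clean execution of the iterated limit ($z\to0^+$ first, then $\lambda\to1^-$) together with the case analysis. One must track the four combinations of the sign of $\eta$ and the direction of monotonicity of $v$, check that the sandwich inequalities reverse correctly in each, and verify that the fixed-$\lambda$ convergence $V(\lambda z)/V(z)\to\lambda^{\eta}$ suffices (no uniformity in $\lambda$ is needed, since $\lambda$ is frozen while $z\to0^+$). The change of variable $w=\lambda z$ producing the upper bound is the only mildly delicate manoeuvre, and it is precisely the step where the exponent shift $\eta\mapsto\eta-1$ for $|v|$ originates.
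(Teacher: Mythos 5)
Your proof is correct, but it takes a genuinely different route from the paper's. The paper does not reprove anything from scratch: it splits into the cases $\eta>0$ (where $V(0+)=0$, so $V(z)=\int_0^z v(y)\,dy$) and $\eta<0$ (where $V(0+)=\infty$, so one writes $V(z)=\int_z^1(-v(y))\,dy+V(1)$), and then cites the classical results at infinity --- Seneta's Tauberian theorem and Theorem~1~(a),(b) in Section VIII.9 of Feller --- remarking that those proofs translate one-to-one to regular variation at the origin. You instead give a self-contained argument directly at $0^+$: the ``$\Leftarrow$'' direction via stability of regular variation under asymptotic equivalence (correct, and it needs no monotonicity at all), and the ``$\Rightarrow$'' direction via the sandwich
\[
(1-\lambda)z\,v(\lambda z)\ \le\ V(z)-V(\lambda z)\ \le\ (1-\lambda)z\,v(z),
\]
the substitution $w=\lambda z$, and the iterated limit $z\to0^+$ then $\lambda\to1^-$ --- which is precisely the standard proof of the monotone density theorem, transplanted to the origin. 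Your version buys self-containedness: it makes explicit the content that the paper delegates to references, it avoids the case split on the sign of $\eta$ (your sandwich and the limits $(1-\lambda^\eta)/(1-\lambda)\to\eta$, $\lambda^{1-\eta}\to1$ are insensitive to that sign), and it shows exactly where the exponent shift $\eta\mapsto\eta-1$ arises. The paper's version is shorter and leans on the literature. One small remark: your anticipated ``four combinations'' of sign and monotonicity collapse to two --- the sign of $\eta$ never enters the sandwich, only the direction of monotonicity of $v$ does, and that case is symmetric as you say.
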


\newpage

\begin{proof}
For $\eta>0$, we have $V(0+)=0$ and, therefore, $V(z)=\int_0^zv(y)dy$. For $\eta<0$, we use the equation $V(z)=\int_z^1(-v(y))dy+V(1)$ instead: here it holds $V(0+)=\infty$. Now our claim follows from well known results for regularly varying functions at infinity (see \cite{Sen73} as well as Theorem~1~(a) and (b) in Section VIII.9 \cite{Fell71}).
The proofs translate one-to-one to regularly varying functions at the origin. 
\end{proof}

\bigskip

\begin{proof}[Proof of Proposition \ref{reg_coa}] 
(i) From the definition \eqref{mu_def}, we obtain by double partial integration (see formula (8) of \cite{DK18}) that
\begin{align}\label{mu}
\frac{\mu(x)}{x(x-1)}\eq\frac{\Lambda(\left\{0\right\})}{2}+\int_0^1 (1-z)^{x-2}h(z)
\, dz.
\end{align}

If $\Lambda(\{0\})>0$, then our claim is obvious because the first term of the right-hand side of \eqref{mu} dominates the integral as $x\rightarrow\infty$ implying $\mu(x)/x^2\sim\Lambda(\{0\})/2=H(0)$ and, therefore, $\alpha=2$. Thus, let us assume that $\Lambda(\{0\})=0$. 
Let 
\begin{align*}
\mathcal{L}(x)\ :=\ \int_0^1 e^{-zx} h(z)\, dz
\end{align*}
be the Laplace transform of $H$.    
In view of a Tauberian theorem (see Theorem~3 and Theorem~2 in Section XIII.5 of \cite{Fell71}), it is sufficient to prove that
\begin{align}\label{int_sim}
\mathcal{L}(x)\ \sim\ \frac{\mu(x)}{x^2}
\end{align}
as $x\rightarrow\infty$. 
For $\frac{1}{2}<\delta<1$, let us consider the decomposition
\begin{align}\label{sum_int}
\frac{\mu(x)}{x(x-1)} \eq \int_0^{x^{-\delta}} (1-z)^{x-2} h(z)\, dz+\int_{x^{-\delta}}^1 (1-z)^{x-2} h(z)\, dz.
\end{align}
Because of $\delta<1$ and \eqref{H(1)}, we have
\begin{align}
\int_{x^{-\delta}}^1 (1-z)^{x-2} h(z)\, dz \loe  (1-x^{-\delta})^{x-2} \int_{x^{-\delta}}^1h(z)\, dz\loe e^{-x^{-\delta}(x-2)}\,H(1) \eq\oo\left(x^{-1}\right)
\end{align}
as $x\rightarrow\infty$. In particular, the second integral in the decomposition \eqref{sum_int} can be neglected in the limit $x\rightarrow\infty$ since $\mu(x)/(x(x-1))\geq \mu'(1)/x$ due to Lemma \ref{prop} (iii).  
As to the first integral in \eqref{sum_int}, observe for $\delta>\frac{1}{2}$ that 
\[-\log{\frac{(1-z)^{x-2}}{e^{-zx}}}\eq \OO(x^{1-2\delta})\ \longrightarrow \ 0\]
uniformly for $z\in[0,x^{-\delta}]$ as $x\to\infty$ and, therefore,
\begin{align}\label{int1}
\int_0^{x^{-\delta}} (1-z)^{x-2} h(z)\, dz\ \sim\ \int_0^{x^{-\delta}} e^{-zx} h(z)\, dz.
\end{align}
Also note that
\begin{align}\label{int2}
\int_{x^{-\delta}}^1e^{-zx}h(z)dz\loe e^{-x^{1-\delta}}H(1) 
\eq\oo\left(x^{-1}\right)
\end{align}
as $x\to\infty$. Combining \eqref{sum_int} to \eqref{int2} entails 
\[\int_0^1 (1-z)^{x-2} h(z) dz\ \sim \ \mathcal{L}(x).\]
Hence, along with formula \eqref{mu}, this proves the asymptotics in \eqref{int_sim}. 
Moreover, from Lemma~\ref{prop}~(ii) we get $1\leq\alpha\leq 2$. \medskip

(ii) If $1<\alpha<2$, then $\Lambda(\{0\})=0$. 
Lemma \ref{deriv} provides that for $\al<2$ the function $H(u)$ is regularly varying with exponent $2-\al$ iff $h(u)$ is regularly varying with exponent $1-\al$  and then  
\[(2-\alpha)H(u)\ \sim\ uh(u)\]
as $u\to 0^+$. 
Applying Lemma \ref{deriv} once more  for $\al>1$, 
$h(u)$ is regularly varying with exponent $1-\al$ iff $\int_{(u,1]}\frac{\Lambda(dp)}{p^2}$ is regularly varying with exponent $-\al$ and then 
\[(\alpha-1)h(u)\ \sim\ u\int_{(u,1]}\frac{\Lambda(dp)}{p^2}\]
as $u\to 0^+$.
Bringing both asymptotics together with statement (i) finishes the proof. 
\end{proof}

\bigskip

\section{The length of a random external branch} \label{sec_rand}
Recall that $T^n$ denotes the length of an external branch picked at random.
The following result on its distribution function does not only play a decisive role in the proofs of Theorem~\ref{dustless} and~\ref{indep} but is also of interest on its own. It shows that the distribution of $T^n$ is primarily determined by the rate function $\mu$.  

\medskip

\newpage

\begin{theorem} \label{thm:int} 
For a $\Lambda$-coalescent without a dust component and a sequence $\left(r_n\right)_{n\in\N }$ satisfying $1< r_n\leq n$ for all $n\in\N $, 
we have 
\begin{align}\label{mu_int}
\PP\left(T^{n}> \int_{r_n}^n\frac{dx}{\mu(x)}\right) \eq\frac{\mu(r_n)}{\mu(n)}+\oo(1)
\end{align}
as $n\to\infty$. Moreover, 
\begin{align}\label{ineq}
\left(\frac{r_n}{n}\right)^2+\oo(1) \loe \PP\left(T^{n}> \int_{r_n}^n\frac{dx}{\mu(x)}\right) \loe \frac{r_n}{n}+\oo(1)
\end{align}
as $n\to\infty$. 
\end{theorem}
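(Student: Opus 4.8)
The plan is to establish the exact asymptotics \eqref{mu_int} first and then read off the two-sided bound \eqref{ineq} from it via Lemma~\ref{prop}~(ii). Write $s_n:=\int_{r_n}^n dx/\mu(x)$. By exchangeability $T^n$ is distributed as $T_1^n$, so that $\PP(T^n>s)=\PP(\{1\}\in\Pi_n(s))$ is the probability that the branch of leaf~$1$ is still a singleton at time~$s$. First I would compute this survival probability not at the deterministic time $s_n$ but at the stopping time $\widetilde\rho_{r_n}=\inf\{t:N_n(t)\leq r_n\}$. Conditioning on the $\sigma$-field $\mathcal G$ generated by the block-counting path $X=(X_j)$ and using that, by exchangeability, the blocks merging at each step form a uniformly chosen subset of those present, leaf~$1$ escapes the $(i{+}1)$-st merger with conditional probability $(X_{i+1}-1)/X_i$, independently over $i$. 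A telescoping then gives the cornerstone formula
\[
\PP(\{1\}\in\Pi_n(\widetilde\rho_{r_n}))\eq\E\Big[\prod_{i=0}^{\rho_{r_n}-1}\frac{X_{i+1}-1}{X_i}\Big]\eq\E\Big[\frac{X_{\rho_{r_n}}-1}{n}\prod_{i=1}^{\rho_{r_n}-1}\Big(1-\frac1{X_i}\Big)\Big].
\]

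The second step is to evaluate this expectation in the \emph{core regime} $r_n\to\infty$, $r_n\leq\gamma n$ for some $\gamma<1$, and $s_n\to0$, where the laws of large numbers apply. Proposition~\ref{ErgLem2} yields $\sum_{i=1}^{\rho_{r_n}-1}X_i^{-1}=\log\big(\tfrac{\mu(n)}{n}\tfrac{r_n}{\mu(r_n)}\big)+\oo_P(1)$ (the $i=0$ term $1/n$ being negligible), and since $\sum X_i^{-2}\le r_n^{-1}\sum X_i^{-1}\to0$ controls the expansion $\log(1-x)=-x+O(x^2)$, the product satisfies $\prod_{i\ge1}(1-X_i^{-1})=(1+\oo_P(1))\tfrac{n}{\mu(n)}\tfrac{\mu(r_n)}{r_n}$. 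Together with the overshoot estimate $X_{\rho_{r_n}}=(1+\oo_P(1))r_n$ (valid in the small-time regime, where jumps are of smaller order than $X$), the bracket tends to $\mu(r_n)/\mu(n)$ in probability; being bounded by~$1$, its expectation converges as well, so $\PP(\{1\}\in\Pi_n(\widetilde\rho_{r_n}))=\mu(r_n)/\mu(n)+\oo(1)$.

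The hard part will be the third step, the passage from the random time $\widetilde\rho_{r_n}$ to the deterministic time $s_n$; one cannot merely substitute the limit of $\widetilde\rho_{r_n}$ into a threshold probability. I would sandwich: for small $\e>0$ choose $r_n^\pm$ with $\int_{r_n^\pm}^n dx/\mu(x)=(1\mp\e)s_n$, so that Proposition~\ref{ErgLem1} gives $\widetilde\rho_{r_n^+}\approx(1-\e)s_n<s_n<(1+\e)s_n\approx\widetilde\rho_{r_n^-}$ with probability tending to~$1$; hence, up to events of vanishing probability, $\{T^n>\widetilde\rho_{r_n^-}\}\subseteq\{T^n>s_n\}\subseteq\{T^n>\widetilde\rho_{r_n^+}\}$, and by the second step the outer probabilities are $\mu(r_n^-)/\mu(n)+\oo(1)$ and $\mu(r_n^+)/\mu(n)+\oo(1)$ respectively. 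The crux is to show $\mu(r_n^\pm)/\mu(n)\to L:=\lim\mu(r_n)/\mu(n)$ when first $n\to\infty$ and then $\e\to0$. This rests on two uses of Lemma~\ref{prop}~(ii): on one hand it yields $1-r_n/r_n^+\loe \e\,\log(n/r_n)(1+\oo(1))$ (and symmetrically for $r_n^-$); on the other hand, in the nontrivial regime $L\in(0,1)$ it confines $r_n/n$ to a compact subset of $(0,1)$, so that $\log(n/r_n)$ stays bounded, whence $r_n^\pm/r_n\to1$ and $\mu(r_n^\pm)/\mu(r_n)\to1$. Letting $\e\to0$ then squeezes $\PP(T^n>s_n)\to L$, which is \eqref{mu_int} in this regime.

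Finally, sequences $r_n$ outside the core regime---those with $\mu(r_n)/\mu(n)\to0$ or $\to1$, where $s_n$ need not vanish---I would handle by monotonicity: since $r\mapsto\PP(T^n>\int_r^n dx/\mu(x))$ is nondecreasing in $r$, I bound $\PP(T^n>s_n)$ from both sides by the same quantity for comparison sequences $\bar r_n$ defined by $\mu(\bar r_n)/\mu(n)\equiv\delta\in(0,1)$, which do lie in the core regime, and let $\delta\to0$ or $\delta\to1$; passing to subsequences along which $\mu(r_n)/\mu(n)$ converges makes this rigorous. This proves \eqref{mu_int} for every admissible $(r_n)$. The bound \eqref{ineq} is then immediate from Lemma~\ref{prop}~(ii), which gives $\mu(r_n)/\mu(n)\loe r_n/n$ and $\mu(r_n)/\mu(n)\goe \tfrac{r_n(r_n-1)}{n(n-1)}=(r_n/n)^2+\oo(1)$ uniformly, so that \eqref{mu_int} delivers both inequalities.
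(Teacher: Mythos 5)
Your proposal is correct and follows essentially the same route as the paper: the telescoping product formula for the survival probability along the embedded chain, evaluation at the stopping time $\rho_{r_n}$ via Proposition \ref{ErgLem2} together with the overshoot estimate, a sandwich between stopping times $\widetilde{\rho}_{r_n^{\pm}}$ via Proposition \ref{ErgLem1} to pass to the deterministic threshold, a monotonicity/comparison argument for the degenerate regimes, and Lemma \ref{prop} (ii) to deduce \eqref{ineq} from \eqref{mu_int}. The only notable difference is presentational: the paper carries out the whole argument conditionally on $N_n$ (establishing the stronger statement \eqref{cond}, which is reused in the proof of Theorem \ref{indep}), and organizes the case analysis by subsequential limits of $r_n/n$ rather than of $\mu(r_n)/\mu(n)$, but these parametrizations are equivalent by Lemma \ref{prop} (ii).
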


\medskip

Observe that the integral $\int_{r_n}^n\frac{dx}{\mu(x)}$ is the asymptotic time needed to go from $n$ to $r_n$ lineages according to Proposition \ref{ErgLem1}. 

\medskip 

For the proof, we recall our notations. $N_n=(N_n(t))_{t\ge 0}$ denotes the block counting process, with the embedded Markov chain $X=(X_j)_{j\in\N_0}$. 
In particular, we have $N_n(0)=X_0=n$ and we set $X_j=1$ for $j\geq\tau_n$, where $\tau_n$ is defined as the total number of merging events. 
The waiting time of the process $N_n$ in state $X_j$ is again referred to as $W_j$ for $0\leq j\leq \tau_n-1$.    
The number of merging events until the external branch ending in leaf $i\in\left\{1,\ldots,n\right\}$ coalesces is given by
\[\zeta_i^n\ :=\ \max{\left\{j\geq 0: \ \left\{i\right\}\in\Pi_n(W_0+\cdots+W_{j-1})\right\}}.\]  
Similarly, $\zeta^n$ denotes the corresponding number of a random external branch with length $T^n$. 

\medskip

\begin{proof}[Proof of Theorem \ref{thm:int}] 
For later purposes, we show the stronger statement
\begin{align}\label{cond}
\PP\left(T^{n} > \int_{r_n}^n\frac{dx}{\mu(x)}\,\bigg|\,N_n\right) \eq \frac{\mu(r_n)}{\mu(n)}+\oo_P(1)
\end{align} 
as $n\to\infty$. It implies \eqref{mu_int} by taking expectations and using dominated convergence. 
The statement \eqref{ineq} is a direct consequence in view of Lemma \ref{prop} (ii).  

\medskip
 
In order to prove \eqref{cond}, note that, by the standard subsubsequence argument and the metrizability of the convergence in probability, we can assume that $r_n/n$ converges to some value $q$ with $0\leq q\leq 1$. We distinguish three different cases of asymptotic behavior of the sequence $r_n/n$: 

\medskip

(a) We begin with the case $r_n\sim qn$ as $n\to\infty$, where $0<q<1$. Then there exist $q_1,q_2\in\left(0,1\right)$ such that $q_1n\leq r_n\leq q_2n$ for all $n\in\N $ but finitely many. \\ 
Let us first consider the discrete embedded setting and afterwards insert the time component. Since there are $\Delta X_{0}+1$ branches involved in the first merger, we have 
\[\PP\left(\zeta^n\ge 1\mid N_n\right) \eq 1-\frac{\Delta X_0+1}{X_0} \eq \frac{X_1-1}{X_0}\quad a.s.\]
Iterating this formula, it follows
\[\PP\left(\zeta^n\ge k\mid N_n\right) \eq \prod_{j=0}^{k-1}\frac{X_{j+1}-1}{X_j}\eq \frac{X_k-1}{n-1}\prod_{j=0}^{k-1}\left(1-\frac{1}{X_j}\right)\quad \text{a.s.}\]
for $k\geq 1$.  For a combinatorial treatment of this formula, see  \cite[Lemma 4]{DK18}.
Note that $\sum_{j=0}^{k-1}X_j^{-2}\leq\sum_{m=X_{k-1}}^\infty m^{-2}\leq 2\left(X_{k-1}\right)^{-1}$ 
to obtain via a Taylor expansion that
\begin{align}\label{Mac}
\PP\left(\zeta^n\geq k\left.\right|N_n\right)\eq\frac{X_k-1}{n-1}\exp{\Bigg(-\sum_{j=0}^{k-1}\frac{1}{X_j}+\OO\left(X_{k-1}^{-1}\right)\Bigg)} \qquad \text{a.s.}
\end{align}
as $n\rightarrow\infty$.

\medskip

We like to evaluate this quantity at the stopping times
\[\rho_{r_n}\ :=\ \min\{j\geq 0:\, X_j\leq r_n\}.\] 
From Lemma \ref{prop} (i) and (iii), we know that the function $\mu(x)$ is increasing in $x$ and that $x/\mu(x)$ converges in the dustless case to $0$ as $x\to\infty$. In view of $r_n\geq q_1n $, therefore, we have
\[\int_{r_n}^n\frac{dx}{\mu(x)}\loe \frac{n-r_n}{\mu(r_n)} \loe \left(\frac{1}{q_1}-1\right)\frac{r_n}{\mu(r_n)} \eq \oo(1).\] 
Hence, we may apply Proposition \ref{ErgLem2}  and obtain
\[\sum_{j=0}^{\rho_{r_n}-1}\frac{1}{X_j}\eq\log{\left(\frac{\mu(n)}{n}\frac{X_{\rho_{r_n}}}{\mu(X_{\rho_{r_n}})}\right)}+\oo_P(1).\]
Also, Lemma 3 of \cite{DK18} implies 
\[X_{\rho_{r_n}}
\eq r_n +\OO_P\left(\Delta X_{\rho_{r_n}}\right)\eq r_n+\oo_P(X_{\rho_{r_n}}).\]
Inserting these two estimates into equation \eqref{Mac} and using Lemma \ref{prop} (ii), it follows
\begin{align}\label{discrete}
\PP\left(\zeta^n\geq \rho_{r_n}\left.\right|N_n\right) \eq \frac{X_{\rho_{r_n}}-1}{n-1}\frac{\mu\left(X_{\rho_{r_n}}\right)}{X_{\rho_{r_n}}}\frac{n}{\mu\left(n\right)}
(1+\oo_P(1))\eq \frac{\mu\left(r_n\right)}{\mu\left(n\right)}+\oo_P(1).
\end{align}

In order to transfer this equality to the continuous-time setting, 
we first show that for each $\e\in(0,1)$ there is an $\delta>0$ such that 
\begin{align}\label{15}
(1+\delta)\int_{(1+\e)r_n}^n\frac{dx}{\mu(x)}\ <\ \int_{r_n}^n\frac{dx}{\mu(x)}\ <\ (1-\delta)\int_{(1-\e)r_n}^n\frac{dx}{\mu(x)}
\end{align}
for large $n\in\N $.
For the proof of the left-hand inequality, note that due to Lemma \ref{prop} (ii) we have
\[\frac{1}{n-(1+\e)r_n}\int_{(1+\e)r_n}^n\frac{dx}{\mu(x)}\ \leq\ \frac{1}{n-r_n}\int_{r_n}^n\frac{dx}{\mu(x)}\]
implying with $q_1n\leq r_n$ that 
\[\frac{1}{1-\e\frac{q_1}{1-q_1}}\int_{(1+\e)r_n}^n\frac{dx}{\mu(x)}\ \leq\ \frac{1}{1-\e\frac{r_n}{n-r_n}}\int_{(1+\e)r_n}^n\frac{dx}{\mu(x)}\ \leq\ \int_{r_n}^n\frac{dx}{\mu(x)}.\]
These inequalities show how to choose $\delta>0$. The right-hand inequality in \eqref{15} follows along the same lines.\\ 
Now, recalling the notion
\[\widetilde{\rho}_{r_n}\ :=\ \inf\{t\geq 0:\, N_n(t)\leq r_n\},\] 
Proposition \ref{ErgLem1} gives for sufficiently small $\e>0$ the formula
\begin{align}\label{rho_tilde}
\widetilde{\rho}_{r_n(1+\e)}\eq \int_{r_n(1+\e)}^n\frac{dx}{\mu\left(x\right)}\left(1+\oo_P(1)\right)
\end{align}
as $n\rightarrow\infty$.  
Combining \eqref{discrete} to \eqref{rho_tilde} yields
\begin{align*}
\PP\Big(T^n>&\int_{r_n}^n\frac{dx}{\mu(x)}\ \bigg|\, N_n\Big)\\[1ex]
& \loe \PP\bigg(T^n\geq\left(1+\delta\right)\int_{r_n(1+\e)}^n\frac{dx}{\mu(x)}\ \bigg|\, N_n\bigg)\\[1ex]
& \loe \PP\left(\left.T^n\geq\widetilde{\rho}_{r_n(1+\e)}\right|N_n\right)+\PP\bigg(\left(1+\delta\right)\int_{r_n(1+\e)}^n\frac{dx}{\mu(x)}<\widetilde{\rho}_{r_n(1+\e)}\ \bigg|\, N_n\bigg)\\[1ex]
& \eq \ \PP\left(\left.\zeta^n\geq \rho_{r_n(1+\e)}\right|N_n\right)+\oo_P(1)\\[1ex]
& \eq \ \frac{\mu(r_n(1+\e))}{\mu(n)}+\oo_P(1)\\[1ex]
& \loe \ \frac{\mu(r_n)}{\mu(n)}\left(1+\e\right)^2+\oo_P(1),
\end{align*}
where we used Lemma \ref{prop} (ii) for the last inequality. 
With this estimate holding for all $\e>0$, we end up with
\begin{align*} 
\PP\left(\left.T^n>\int_{r_n}^n\frac{dx}{\mu(x)}\right|N_n\right)\loe \frac{\mu(r_n)}{\mu(n)}+\oo_P(1)
\end{align*}
as $n\to\infty$. 
The reverse inequality can be shown in the same way so that we obtain equation \eqref{cond}. \bigskip

(b) Now we turn to the two remaining cases $r_n\sim n$ and $r_n=\oo(n)$. In view of Lemma \ref{prop} (ii), the asymptotics $r_n\sim n$ implies $\mu(r_n)\sim\mu(n)$, i.e., the right-hand side of \eqref{cond} converges to $1$. Furthermore, the sequence $(r_n')_{n\in\N }:=(qr_n)_{n\in\N }$, $0<q<1$, fulfills the requirements of part (a). With respect to Lemma \ref{prop} (ii), part (a), therefore, entails for all $q\in(0,1)$,
\begin{align*}
\PP\left(\left.T^n>\int_{r_n}^n\frac{dx}{\mu(x)}\right|N_n\right)\goe\PP\left(\left.T^n>\int_{r_n'}^n\frac{dx}{\mu(x)}\right|N_n\right)
 \goe \frac{\mu(qn)}{\mu(n)} +\oo_P(1) \goe q^2 +\oo_P(1) 
\end{align*} 
as $n\to\infty$. Hence, the left-hand side of \eqref{cond} also converges to $1$ in probability. Similarly, the convergence of both sides of \eqref{cond} to $0$ can be shown for $r_n=\oo(n)$.
\end{proof}

\bigskip

\section{Proofs of Theorem \texorpdfstring{\protect\ref{dustless} and \protect\ref{indep}}{1.1 and 1.2}}\label{sec_proofs}

\begin{proof}[Proof of Theorem \ref{dustless}]
Let $r_n$ be as required in Theorem \ref{thm:int}. Applying Lemma \ref{prop} (ii), we obtain
\[\int_{r_n}^n\frac{dx}{x} \loe \frac{\mu(n)}{n}\int_{r_n}^n\frac{dx}{\mu(x)} \loe  \int_{r_n}^n\frac{n-1}{x(x-1)}dx.\] 
Observing 
\[\int_{r_n}^n\frac{dx}{x} \eq \log{\frac{n}{r_n}}\]
and
\[\int_{r_n}^n\frac{n-1}{x(x-1)}dx \eq (n-1)\log{\frac{r_n-nr_n}{n-nr_n}},\]
Theorem \ref{thm:int} entails
\begin{align}\label{estim_down}
\PP\left(\frac{\mu(n)}{n}\,T^{n} >  \log{\frac{n}{r_n}}\right)\goe \left(\frac{r_n}{n}\right)^2+\oo(1)
\end{align}
and
\begin{align}\label{estim_up}
\PP\left(\frac{\mu(n)}{n}\,T^{n} > (n-1)\log{\frac{r_n-nr_n}{n-nr_n}}\right) \loe  \frac{r_n}{n}+\oo(1)
\end{align} 
\enlargethispage{2\baselineskip}
as $n\to\infty$, respectively. \\

Now let $t\geq 0$. Using equation \eqref{estim_down} for \[r_n \eq ne^{-t},\] while choosing 
\[r_n \eq \frac{ne^{t/(n-1)}}{1+n(e^{t/(n-1)}-1)}\]
in \eqref{estim_up}, we arrive at
\[e^{-2t}+\oo(1) \loe \PP\left(\frac{\mu(n)}{n}\,T^{n}> t\right) \loe \frac{e^{t/(n-1)}}{1+n(e^{t/(n-1)}-1)}+\oo(1) \eq \frac{1}{1+t}\left(1+\oo(1)\right)
,\]
as required. 
\end{proof}

\smallskip

\begin{proof}[Proof of Theorem \ref{indep}]
First, we treat the dustless case. Similar to the proof of Theorem \ref{thm:int}, we first consider the discrete version $\zeta_i^n$ of $T_i^n$ for $1\leq i\leq k$ to prove 
\begin{align}\label{discrete2}
\PP\left(\zeta_1^n\geq I_1^n,\,\ldots,\,\zeta_k^n\geq I_k^n\left.\right|N_n\right) \eq \PP\left(\zeta_1^n\geq I_1^n\left.\right|N_n\right)\,\cdots\,\PP\left(\zeta_k^n\geq I_k^n\left.\right|N_n\right)+\oo_P(1)
\end{align}
as $n\to\infty$, where $0=:I_0^n\leq I_1^n\leq\cdots\leq I_k^n$ are random variables measurable with respect to the $\sigma$-fields $\sigma\left(N_n\right)$. 
Denote by $\zeta_A$ the number of mergers until some external branch out of the set $A\subseteq\left\{1,\ldots,n\right\}$ coalesces and let $a:=\#A$.  
Given $\Delta X_j$, the $j$-th merging amounts to choosing $\Delta X_{j}+1$ branches uniformly at random out of the $X_j$ present ones implying 
\begin{align}\label{modul}
\PP\left(\zeta_A\geq m\left.\right|N_n\right)\eq\frac{(X_m-1)\cdots(X_m-a)}{(n-1)\cdots(n-a)}\prod_{j=0}^{m-1}\left(1-\frac{a}{X_j}\right) \qquad a.s.
\end{align}
for $m\geq 1$ (for details see (28) of \cite{DK18}).  
Let $\bar{\zeta}_{\left\{1,\ldots,k\right\}}:=\zeta_{\left\{1,\ldots,k\right\}}$ and $\bar{\zeta}_{\left\{i,\ldots,k\right\}}:=\zeta_{\left\{i,\ldots,k\right\}}-\zeta_{\left\{i-1,\ldots,k\right\}}$ for $2\leq i\leq k$. Moreover, let $\widebar{N}_{X_j}(t):=N_n(t+W_0+\cdots+W_{j-1})$, in particular, $\widebar{N}_{X_0}(t):=N_n(t)$. The Markov property and \eqref{modul} provide  
\enlargethispage{2\baselineskip}
\begin{align*}
\PP\Big(\zeta_1^n\geq I_1^n,&\ldots,\zeta_k^n\geq I_k^n\Big|N_n\Big)\\[1.5ex]
& \eq  \prod_{i=1}^{k}\PP\Big(\bar{\zeta}_{\left\{i,\ldots,k\right\}}\geq I_i^n-I_{i-1}^n\Big|\widebar{N}_{X_{I_{i-1}^n}}\Big)\\[1.5ex]
& \eq  \prod_{i=1}^{k}\left[\frac{(X_{I_i^n}-1)\cdots(X_{I_i^n}-k+i-1)}{(X_{I_{i-1}^n}-1)\cdots(X_{I_{i-1}^n}-k+i-1)}\prod_{j=I_{i-1}^n}^{I_i^n-1}\left(1-\frac{k-i+1}{X_j}\right)\right]\\[1.5ex]
& \eq \prod_{i=1}^{k}\left[\frac{(X_{I_i^n}-k+i-1)}{(n-k+i-1)}\prod_{j=I_{i-1}^n}^{I_i^n-1}\left(1-\frac{k-i+1}{X_j}\right)\right] \qquad a.s.
\end{align*} 
For $1\leq i\leq k$, note that
\[\left(1-\frac{k-i+1}{X_j}\right)\eq\left(1-\frac{1}{X_j}\right)^{k-i+1}+\OO\left(X_j^{-1}\right)\]
and
\[\frac{X_{I_i^n}-k+i-1}{n-k+i-1}\eq\frac{X_{I_i^n}-1}{n-1}+\OO\left(n^{-1}\right)\]
to obtain
\begin{align*}
\PP\big(\zeta_1^n\geq I_1^n,&\ldots,\zeta_k^n\geq I_k^n\left.\right|N_n\big)\\[1.5ex]
 & \eq  \prod_{i=1}^{k}\left[\left(\frac{X_{I_i^n}-1}{n-1}+\OO\left(n^{-1}\right)\right)\left(\,\prod_{j=I_{i-1}^n}^{I_i^n-1}\left(1-\frac{1}{X_j}\right)^{k-i+1}+\OO\left(\left(X_{I_i^n}-1\right)^{-1}\right)\right)\right]\\[1.5ex]
 & \eq  \prod_{i=1}^{k}\left[\frac{X_{I_i^n}-1}{n-1}\,\prod_{j=I_{i-1}^n}^{I_i^n-1}\left(1-\frac{1}{X_j}\right)^{k-i+1}\right]+\oo_P(1)\\[1.5ex]
&\eq \prod_{i=1}^{k}\left[\frac{X_{I_i^n}-1}{n-1}\prod_{j=0}^{I_i^n-1}\left(1-\frac{1}{X_j}\right)\right]+\oo_P(1)
\end{align*}
as $n\to\infty$, where the rightmost $\OO(\cdot)$-term in the first line stems from the fact that $X_{I_i^n}<X_j$ for all $ j< I_i^n$.  
Furthermore, from \eqref{modul}
with $A=\{i\}$, we know that 
\[\PP\left(\zeta_i^n\geq I_i^n\left|\right.N_n\right)\eq\frac{X_{I_i^n}-1}{n-1}\prod_{j=0}^{I_i^n-1}\left(1-\frac{1}{X_j}\right) \qquad a.s.\]
so that we arrive at equation \eqref{discrete2}.
\\ 
Now based on exchangeability, it is no loss to assume that $0\leq t_1^n\leq\cdots\leq t_k^n$. So 
inserting 
\[I_i^n\ :=\ \min\bigg\{k\geq 1:\ \sum_{j=0}^{k-1} W_j > t_i^n\bigg\}\wedge\tau_n\]
in \eqref{discrete2} yields 
\enlargethispage{2\baselineskip} 
\begin{align*}
\PP\left(\left.T_1^n\,>\, t_1^n,\,\ldots,\,T_k^n\,>\, t_k^n\right|N_n\right) & \eq \PP\left(\zeta_1^n\geq I_1^n,\,\ldots,\,\zeta_k^n\geq I_k^n\left.\right|N_n\right)\\[.5ex]
& \eq \prod_{i=1}^k \PP\left(\zeta_i^n\geq I_i^n\left.\right|N_n\right) +\oo_P(1)\\ 
&\eq \prod_{i=1}^k \PP\left(\left.T_i^n\,>\, t_i^n\right|N_n\right)
+\oo_P(1)  
\end{align*}
as $n\to\infty$.  
For $1\leq i\leq k$, let $1<r_i^n\leq n$ be defined implicitly via 
\[t_i^n\eq \int_{r_i^n}^n\frac{dx}{\mu(x)}\,.\] 
From Lemma \ref{prop} (iii) we know that $\int_1^n\frac{dx}{\mu(x)}=\infty\,$; therefore, $r_i^n$ is well-defined. 
In the dustless case, consequently, we may apply formula \eqref{cond} to obtain
\begin{align*}
\PP\left(\left.T_1^n\,>\, t_1^n,\,\ldots,\,T_k^n\,>\, t_k^n\right|N_n\right) & \eq \prod_{i=1}^k \PP\left(\left.T_i^n\,>\, t_i^n\right|N_n\right) +\oo_P(1)
\\[.5ex]
& \eq 
\prod_{i=1}^k\frac{\mu(r_i^n)}{\mu(n)}+\oo_P(1) 
\end{align*}
as $n\to\infty$. Taking expectations in this equation yields, via dominated convergence, the theorem's claim for $\Lambda$-coalescents without a dust component.

\bigskip

For $\Lambda$-coalescents with dust, we use for $t>0$ the formula
\[\lim_{n\to\infty}\PP\left(T_1^n>t,\ldots,T_k^n>t\right) \eq \E\left[S_t^k\right],\]
with non-degenerative positive random variables $S_t$ (see (10) in \cite{Moe10}).  For $k\geq 2$, Jensen's inequality implies
\[\lim_{n\to\infty}\PP\left(T_1^n>t,\ldots,T_k^n>t\right) > \E\left[S_t\right]^k \eq \lim_{n\to\infty}\PP\left(T_1^n>t,\ldots,T_k^n>t\right).\]
This finishes the proof. 
 
\end{proof}

\bigskip

\section{Proof of Theorem \texorpdfstring{\protect\ref{iff}}{1.3}}\label{sec_proof}

(a) First suppose that $\mu(x)$ is regularly varying with exponent $\alpha\in[1,2]$, i.e., we have
\begin{align}\label{varying}
\mu(x)\eq x^\alpha L(x),
\end{align}
where $L$ is a slowly varying function. Let $r_n:=qn$ with $0<q\leq 1$.  
The statement of Theorem~\ref{thm:int} then boils down to
\begin{align}\label{continuous2}
\PP\left(\frac{\mu(n)}{n}T^{n}\,>\, \frac{1}{n} \int_{qn}^n\frac{\mu(n)}{\mu(x)}dx\right)  \eq q^\alpha+\oo(1)
\end{align}
as $n\rightarrow\infty$.
From \eqref{varying} we obtain 
\[n^{-1}\int_{qn}^n\frac{\mu\left(n\right)}{\mu\left(x\right)}dx  
\ \sim\  
\begin{cases}
\; -\log{q} \quad &\text{ for } \quad \alpha =1\\[1ex]
\; \frac{1}{\alpha-1}\left(q^{-(\alpha-1)}-1\right) \quad &\text{ for } \quad 1<\alpha\leq 2
\end{cases}\]
as $n\rightarrow\infty$. 
Thus, choosing, for given $t\geq 0$, 
\[q \eq
\begin{cases}
\; e^{-t} \quad &\text{ for } \quad \alpha=1\\[1ex]
\; \left(1+\left(\alpha-1\right)t\right)^{-\frac{1}{\alpha-1}} \quad &\text{ for } \quad 1<\alpha\leq 2
\end{cases}\]
in equation (\ref{continuous2}) yields the claim. 
\medskip

(b) Now suppose that $\gamma_n\,T^n$ converges for some positive sequence $(\gamma_n)_{n\in\N }$ in distribution as $n\rightarrow\infty$ to a probability measure unequal to $\delta_0$ with cumulative distribution function $F=1-\widebar{F}$, i.e.,
\begin{align}\label{=>}
\PP\left(\gamma_n\, T^n> t\right)\ \stackrel{n\to\infty}{\longrightarrow} \ \widebar{F}(t)
\end{align}
for $t\geq 0$, $t\notin D$, where $D$ denotes the set of discontinuities of $\widebar{F}$. Note that $0<\widebar{F}(t)<1$ for all $t>0$ due to Theorem \ref{dustless}. 
In order to prove that $\mu$ is regularly varying, we bring together the assumption \eqref{=>} with the statement of Theorem \ref{thm:int}, which requires several steps.

For this purpose we define, similarly as in the proof of Theorem \ref{indep}, the numbers $r_n(t)$ for $t\geq 0$ implicitly via
\begin{align}\label{chi}
t\eq \gamma_n\int_{r_n(t)}^n\frac{dx}{\mu(x)}\,.
\end{align}
Let us first solve this implicit equation. 
Applying formula \eqref{cond} and \eqref{=>}, we obtain 
\begin{align}\label{sim}
\frac{\mu(r_n(t))}{\mu(n)} 
\eq \widebar{F}(t)+\oo(1)
\end{align}  
for all $t\geq 0$, $t\notin D$, as $n\to\infty$. Differentiating both sides of \eqref{chi} with respect to $t$ and using Lemma \ref{prop} (i) yields 
\[\left|\frac{\gamma_n\,r'_n(t)}{\mu(n)}\right|\eq\frac{\mu(r_n(t))}{\mu(n)} \loe 1.\]
In conjunction with \eqref{sim}, it follows that
\[\frac{\gamma_n\,r'_n(t) }{\mu(n)}\eq-\widebar{F}(t)+\oo(1)\]
and, by dominated convergence, 
\begin{align}\label{r_n}
r_n(t)\eq n-\frac{\mu(n)}{\gamma_n}\left(\int_0^t\widebar{F}(s)ds +\oo\left(1\right)\right) 
\end{align} 
as $n\to\infty$. 

\medskip

Next, we show that
\(\gamma_n \sim \ c\mu'(n) \)
for some $c>0$. 
From Theorem \ref{dustless} it follows that there exist $0<c_1\leq c_2<\infty$ with 
\begin{align}\label{mu_eta}
c_1\,\frac{\mu(n)}{n}\loe\gamma_n\loe c_2\,\frac{\mu(n)}{n}, \qquad n\geq 2.
\end{align}

Furthermore, from equation \eqref{r_n} and a Taylor expansion, we get
\[\mu(r_n(t))\eq\mu(n)+\mu'(n)\left(r_n(t)-n\right)+\frac{1}{2}\mu''\left(\xi_n\right)\left(r_n(t)-n\right)^2,\]
where $r_n(t)\leq\xi_n\leq n$. Dividing this equation by $\mu(n)$, using \eqref{sim} and \eqref{r_n}, as well as rearranging terms, we obtain
\[\bigg|1-\widebar{F}(t)+\oo(1)-\frac{\mu'(n)}{\gamma_n}\int_0^t\widebar{F}(s)ds\left(1+\oo(1)\right)\bigg| 
\eq \frac{\mu''(\xi_n)\mu(n)}{2\gamma_n^2}\left(\int_0^t\widebar{F}(s)ds\right)^2\left(1+\oo(1)\right)\]
as $n\to\infty$. 
From Lemma \ref{prop} (iii) and (i), we get $\mu''(\xi_n)\leq\mu'(\xi_n)/(\xi_n-1)\leq \mu'(n)/(r_n(t)-1)$. 
Moreover, equation \eqref{r_n} with \eqref{mu_eta} yields $r_n(t)-1\geq n/2+\oo(n)$ for $t$ sufficiently small.  
Taking \eqref{mu_eta} once more into account, we obtain that for given $\e>0$ and $t$ sufficiently small, 
\begin{align*}
\bigg|1-\widebar{F}(t)+\oo(1)-\frac{\mu'(n)}{\gamma_n}\int_0^t\widebar{F}(s)ds\left(1+\oo(1)\right)\bigg| &\loe \frac{\mu'(n)}{c_1\gamma_n}\left(\int_0^t\widebar{F}(s)ds\right)^2\left(1+\oo(1)\right)\\[1ex]
& \loe \e\,\frac{\mu'(n)}{\gamma_n}\left(\int_0^t\widebar{F}(s)ds\right)\left(1+\oo(1)\right)
\end{align*}
or equivalently, for $t>0$,
\[\bigg|\frac{\gamma_n}{\mu'(n)}\ -\ \frac{\int_0^t\widebar{F}(s)ds}{1-\widebar{F}(t)}\left(1+\oo(1)\right)\bigg|\loe \e\,\frac{\int_0^t\widebar{F}(s)ds}{1-\widebar{F}(t)}\left(1+\oo(1)\right).\]
The right-hand quotient is finite and positive for all $t>0$, which implies our claim $\gamma_n\sim c\mu'(n)$ for some $c>0$. \bigskip

We now remove $\gamma_n$ from our equations by setting $\gamma_n=\mu'(n)$, without loss of generality. With this choice \eqref{mu_eta} changes into
\begin{align*}
c_1\,\frac{\mu(n)}{n}\loe \mu'(n) \loe c_2\,\frac{\mu(n)}{n}, \qquad n\geq 2.
\end{align*} 
Also, inserting \eqref{r_n} and \eqref{mu_eta} in \eqref{sim} yields
\begin{align*}
\mu(n)\widebar{F}(t)\left(1+\oo(1)\right)\eq\mu\left(r_n(t)\right)\eq\mu\left(n-\frac{\mu(n)}{\mu'(n)}\int_0^t\widebar{F}(s)ds+\oo(n)\right)
\end{align*}
as $n\to\infty$.
Let us suitably remodel these formulae. In view of the monotonicity properties of $\mu$ and $\mu'$ due to Lemma \ref{prop} (i), we may proceed to
\begin{align} \label{mu_eta1}c_3\frac{\mu(x)}{x}\loe\mu'(x)\loe c_4\frac{\mu(x)}{x},\quad x\geq 2,
\end{align}
for suitable $0< c_3\leq c_4<\infty$, as well as
\begin{align}\nonumber
\mu(x)\widebar{F}(t)& \eq \mu\left(x-\frac{\mu(x)}{\mu'(x)}\int_0^t\widebar{F}(s)ds+\oo(x)\right)\left(1+\oo(1)\right) \\[1ex]
&\eq \mu\left(x-\frac{\mu(x)}{\mu'(x)}\int_0^t\widebar{F}(s)ds+\oo(x)\right) \label{mu_phi}
\end{align}
as $x\to\infty$, where we pushed the $(1+\oo(1))$-term into $\mu$ by means of Lemma \ref{prop} (ii).  
This equation 
suggests to pass to the inverse of $\mu$.     
From Lemma~\ref{prop}~(i) we know that $\mu(x)$ has an inverse $\nu(y)$.  
For this function,  formula \eqref{mu_eta1} translates into
\begin{align}\label{phi_d}
\frac{\nu(y)}{c_4y}\loe\nu'(y)\loe\frac{\nu(y)}{c_3y}.
\end{align}
Also, applying $\nu$ to equation \eqref{mu_phi}, both inside and outside, we get  
\[\nu \left(y\widebar{F}(t)\right)\eq\nu(y)-y\,\nu'(y)\int_0^t\widebar{F}(s)ds+\oo(\nu(y)).\]

\medskip

This equation allows us, in a next step, to further analyse $\widebar{F}$.  
With $0\leq u<v$, $u,v\notin D$, it follows that
\begin{align}\label{diff_phi}
\nu\left(\widebar{F}(u)y\right)-\nu\left(\widebar{F}(v)y\right)\eq y\,\nu'(y)\int_u^v\widebar{F}(s)ds \left(1+\oo(1)\right)
\end{align}
as $y\to\infty$. This equation immediately implies that $\widebar{F}(v)<\widebar{F}(u)$ for all $u<v$. It also shows that $\widebar{F}$ has no jump discontinuities, i.e., $D=\emptyset$.
Indeed, by the mean value theorem and because $\nu'(y)=1/\mu'(\nu(y))$ is decreasing due to Lemma \ref{prop} (i), we have for $0\leq u<v$,  
\[\nu\left(\widebar{F}(u)y\right)-\nu\left(\widebar{F}(v)y\right)\goe\nu'(y\widebar{F}(u))y\left(\widebar{F}(u)-\widebar{F}(v)\right)\goe\nu'(y)y\left(\widebar{F}(u)-\widebar{F}(v)\right).\]
Thus, also assuming $u,v\notin D$, \eqref{diff_phi} yields
\[\widebar{F}(u)-\widebar{F}(v)\loe\int_u^v\widebar{F}(s)ds\loe v-u,\]
which implies $D=\emptyset$. 

\medskip

Now, we are ready to show that $\nu$ and, therefore, $\mu$ is regularly varying. By a Taylor expansion, we get
\[\nu(\widebar{F}(v)y)-\nu(\widebar{F}(u)y)\eq-\nu'(\widebar{F}(u)y)y(\widebar{F}(u)-\widebar{F}(v))+\frac{1}{2}\nu''(\xi_y)y^2(\widebar{F}(u)-\widebar{F}(v))^2,\]
where $\widebar{F}(v)y\leq\xi_y\leq\widebar{F}(u)y$. 
Dividing this equation by $y\nu'(y)$, using formula \eqref{diff_phi} and rearranging terms, it follows that for $y\to\infty$,  

\begin{align}
\label{new^2}\left|\int_u^v\widebar{F}(s)ds(1+\oo(1))\ -\ \frac{\nu'(\widebar{F}(u)y)}{\nu'(y)}\left(\widebar{F}(u)-\widebar{F}(v)\right)\right|
\eq\frac{1}{2}\frac{\nu''(\xi_y)y}{\nu'(y)}(\widebar{F}(u)-\widebar{F}(v))^2.
\end{align}
Next, let us bound the right-hand term. Note that from Lemma \ref{prop} (iii) we have, for $y$ sufficiently large, 
\[\left|\nu''(y)\right|\eq\nu'(y)^2\frac{\mu''(\nu(y))}{\mu'(\nu(y))}\loe\frac{\nu'(y)^2}{\nu(y)-1} \loe \frac{2\nu'(y)^2}{\nu(y)}  \,.\]

Hence, using \eqref{phi_d} twice and $\widebar{F}(v)y\leq\xi_y\leq\widebar{F}(u)y$, it follows, for $y$ sufficiently large, 
\[\frac{1}{2}\nu''(\xi_y) \loe \frac{\nu'(\xi_y)^2}{\nu(\xi_y)}  \loe \frac{1}{c^2_3} \frac{\nu(\xi_y)}{\xi_y^2}\loe \frac{\nu(\widebar{F}(u)y)}{\widebar{F}(v)^2y^2} \loe \frac{c_4}{c_3^2}\frac{\nu'(\widebar{F}(u)y)\widebar{F}(u)}{\widebar{F}(v)^2y}.\]
Now, for given $u>0$ and given $\e>0$, because of the continuity and strict monotonicity of $\widebar{F}$, we get 

\[\frac{1}{2}\nu''(\xi_y) \loe \e \frac{\nu'(\widebar{F}(u)y)}{y(\widebar{F}(u)-\widebar{F}(v))}\]
if only the (positive) difference $v-u$ is sufficiently small.  Inserting into \eqref{new^2}, we get

\[\left|\int_u^v\widebar{F}(s)ds(1+\oo(1))\ -\ \frac{\nu'(\widebar{F}(u)y)}{\nu'(y)}\left(\widebar{F}(u)-\widebar{F}(v)\right)\right| \loe  \e\,\frac{\nu'(\widebar{F}(u)y)}{\nu'(y)}(\widebar{F}(u)-\widebar{F}(v))\]

or equivalently, for $y\to\infty$,
\[\Big|\frac{\nu'(y)}{\nu'(\widebar{F}(u)y)}\ -\ \frac{\widebar{F}(u)-\widebar{F}(v)}{\int_u^v\widebar{F}(s)ds}(1+\oo(1))\Big| \loe \e\,\frac{\widebar{F}(u)-\widebar{F}(v)}{\int_u^v\widebar{F}(s)ds}(1+\oo(1)).\]
Again, since the right-hand quotient is finite and positive for all $u<v$,
this estimate implies that $\nu'(y)/\nu'(\widebar{F}(u)y)$ has a positive finite limit as $y\to\infty$. Because $\widebar{F}(u)$ takes all values between $0$ and $1$, $\nu'(y)$ is regularly varying.  
From the Lemma in Section VIII.9 of \cite{Fell71}, we then obtain the regular variation of $\nu$ with some exponent $\eta\geq 0$. 
It fulfills $\frac{1}{2}\leq\eta\leq 1$ as Lemma \ref{prop} (ii) yields 
\[a\sqrt{y}\loe \nu(y)\loe b y\]
for some $a,\,b>0$.  
Hence, $\mu$, as the inverse function of $\nu$, is regularly varying with exponent $\alpha\in\left[1,2\right]$ (see Theorem 1.5.12 of \cite{BGT87}).
\qed

\bigskip

\section{Moment calculations for external branches \texorpdfstring{of $\mathbf{\Lambda}$-coalescents}{}}\label{sec_mom}

In this section, we consider the number of external branches $Y_j$ after $j$ merging events:
\[Y_j \ := \ \# \left\{1\leq i\leq n:  \ \{i\}\in\Pi_n\left(W_0+\cdots+W_{j-1}\right)\right\}.\]
In particular, we set $Y_0=n$ and $Y_j=0$ for $j>\tau_n$. (Again, we suppress $n$ in the notation, for convenience.) We provide a representation of the conditional moments of the number of external branches for general $\Lambda$-coalescents (also covering coalescents with a dust component).  
For this purpose, we use the notation  $\left(x\right)_r:=x\left(x-1\right)\cdots\left(x-r+1\right)$ for falling factorials with $x\in\R $ and $r\in\N $. 
Recall that $\tau_n$ is the total number of merging events. 

\medskip

\begin{lem} \label{Lambda}
Consider a general $\Lambda$-coalescent and let $\rho$ be a $\sigma(N_n)$-measurable random variable with $0\leq\rho\leq\tau_n$ a.s.  

\begin{enumerate}
	\item For a natural number $r$, the $r$-th factorial moment, given $N_n$, can be expressed as 
	\[\E\left[\left(Y_\rho\right)_r\left.\right|N_n\right] \eq \left(X_\rho\right)_r\,\prod_{j=1}^\rho \left(1-\frac{r}{X_j}\right) \eq \left(X_\rho-1\right)_r\,\frac{n}{n-r}\,\prod_{j=0}^{\rho-1} \left(1-\frac{r}{X_j}\right)\quad a.s.\]

	\item For the conditional variance, the following inequality holds: 
	\[\V\left(Y_\rho\left|N_n\right.\right) \loe  \E\left[Y_\rho\left|N_n\right.\right] \qquad a.s.\]
\end{enumerate}
\end{lem}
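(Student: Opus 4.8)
The plan is to prove both statements by conditioning on the full process $N_n$ (equivalently, on the block-counting chain $X$ and the merger structure), so that the only randomness left is \emph{which} specific branches are external after each merger. Given $N_n$, the $j$-th merger selects $\Delta X_j + 1$ of the $X_{j-1}$ present blocks uniformly at random to coalesce. The key observation is that a set of $r$ distinct leaves $\{i_1,\ldots,i_r\}$ all remain external (i.e.\ are still singleton blocks) after step $\rho$ precisely when none of the $r$ corresponding singleton blocks is ever chosen in any of the mergers $1,\ldots,\rho$. I would introduce the indicator that a fixed $r$-tuple of leaves survives as singletons and compute $\E[(Y_\rho)_r \mid N_n]$ as $(n)_r$ times the probability that a fixed $r$-tuple survives, since $(Y_\rho)_r$ counts ordered $r$-tuples of distinct external leaves.

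First I would compute the one-step survival probability. Given that $X_{j-1} = b$ blocks are present and $\Delta X_j + 1$ of them merge, a fixed collection of $r$ singleton blocks avoids being touched with probability $\binom{b-r}{\Delta X_j+1}/\binom{b}{\Delta X_j+1} = (b-r)_{\Delta X_j+1}/(b)_{\Delta X_j+1}$, provided $r$ of the present blocks are indeed the singletons in question; one checks this equals $\prod$ of a telescoping ratio. Multiplying across all mergers $j=1,\ldots,\rho$ and telescoping the falling factorials in the block count, I expect the product of per-step survival factors to collapse into the stated closed form $(X_\rho)_r \prod_{j=1}^{\rho}(1 - r/X_j)$. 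Because $\rho$ is $\sigma(N_n)$-measurable, it can be treated as a fixed (random) index inside the conditional expectation, so the optional-stopping subtlety is benign. The second equality in (i) is then a purely algebraic rearrangement: pull out the $j=0$ factor, shift the product index, and use $(X_\rho)_r/(X_\rho - 1)_r$ together with the $X_0 = n$ boundary term to match $\tfrac{n}{n-r}(X_\rho-1)_r\prod_{j=0}^{\rho-1}(1-r/X_j)$.

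For part (ii), I would apply part (i) with $r=1$ and $r=2$, writing $\V(Y_\rho \mid N_n) = \E[(Y_\rho)_2\mid N_n] + \E[Y_\rho\mid N_n] - \E[Y_\rho\mid N_n]^2$. The inequality $\V \leq \E[Y_\rho\mid N_n]$ then reduces to showing $\E[(Y_\rho)_2\mid N_n] \leq \E[Y_\rho\mid N_n]^2$, i.e.\ a negative-correlation (or Cauchy--Schwarz-type) statement for the survival of two versus one singleton. Using the explicit formulas, this amounts to comparing $(X_\rho)_2\prod_{j=1}^\rho(1-2/X_j)$ against $\big((X_\rho)_1\prod_{j=1}^\rho(1-1/X_j)\big)^2$, which boils down termwise to the elementary inequalities $1 - 2/X_j \leq (1-1/X_j)^2$ and $(X_\rho)_2 \leq (X_\rho)_1^2$. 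Both are immediate since $(1-1/b)^2 - (1-2/b) = 1/b^2 \geq 0$ and $X_\rho(X_\rho-1) \leq X_\rho^2$.

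The main obstacle will be the first step: carefully justifying the uniform-selection survival probability per merger and executing the telescoping cleanly, since the falling-factorial bookkeeping across a random number $\rho$ of steps with variable merger sizes $\Delta X_j + 1$ is where sign or index errors creep in. In particular I would be careful that the per-step factor genuinely telescopes against the change in block count so that only the endpoint values $X_0 = n$ and $X_\rho$ survive, rather than leaving a residual product over the $\Delta X_j$. Everything after that — the algebraic recasting in (i) and the variance bound in (ii) — is routine once the conditional survival formula is in hand.
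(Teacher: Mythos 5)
Your proposal is correct, and for part (i) it takes a genuinely different route from the paper's. You identify $\E[(Y_\rho)_r\mid N_n]$ with $(n)_r$ times the probability that a fixed $r$-tuple of leaves stays singleton through the first $\rho$ mergers, and multiply per-merger avoidance probabilities; this works exactly as you predict, since given $N_n$ the $j$-th merger is a uniform choice of $\Delta X_j+1$ out of the $X_{j-1}$ present blocks, so the $j$-th factor is
\[
\binom{X_{j-1}-r}{\Delta X_j+1}\bigg/\binom{X_{j-1}}{\Delta X_j+1} \eq \frac{\left(X_j-1\right)_r}{\left(X_{j-1}\right)_r}\,,
\]
and, writing $(X_j-1)_r=(X_j)_r(1-r/X_j)$, the product telescopes after multiplication by $(n)_r=(X_0)_r$ to $(X_\rho)_r\prod_{j=1}^\rho(1-r/X_j)$; moreover, conditioning on $N_n$ freezes $\rho$, so, as you say, no optional-stopping care is needed. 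The paper argues dynamically instead: given $N_n$ and the past, the decrement $\Delta Y_\rho$ is hypergeometric with parameters $X_{\rho-1}$, $Y_{\rho-1}$ and $\Delta X_\rho+1$, and combining the hypergeometric factorial-moment formula with the Chu--Vandermonde-type identity $(a-b)_r=(a)_r\sum_{i=0}^r\binom{r}{i}(-1)^i(b)_i/(a)_i$ yields the one-step recursion $\E[(Y_\rho)_r\mid N_n,Y_0,\ldots,Y_{\rho-1}]=(Y_{\rho-1})_r\,(X_\rho-1)_r/(X_{\rho-1})_r$, which is then iterated. Your tuple-survival computation is more elementary (no Chu--Vandermonde, no hypergeometric moment formula) and is in fact the same mechanism as formula \eqref{modul}, quoted from \cite{DK18}, that the paper uses in the proofs of Theorems \ref{dustless} and \ref{indep}, so it fits the paper's toolkit; what the paper's recursion buys is the explicit one-step conditional dynamics of the sequence $(Y_j)$ (the hypergeometric decrement description going back to \cite{DKW14}), which is of independent interest. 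Your part (ii) coincides with the paper's proof up to rearrangement: both reduce the claim to $(X_\rho)_2\prod_{j=1}^\rho(1-2/X_j)\loe X_\rho^2\prod_{j=1}^\rho(1-1/X_j)^2$ via $(X_\rho)_2\loe X_\rho^2$ and $1-2/X_j\loe(1-1/X_j)^2$. One pedantic point, which the paper glosses over as well: the termwise comparison needs the factors $1-2/X_j$ to be non-negative, which fails only when $X_j=1$, i.e.\ only if $j=\rho=\tau_n$; in that case $(X_\rho)_2=0$ and both sides vanish, so the inequality still holds, but a sentence noting this would make the argument airtight.
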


\begin{proof}
(i) First, we recall a link between the external branches and the hypergeometric distribution based on the Markov property and exchangeability properties of the $\Lambda$-coalescent, as already described for Beta-coalescents in \cite{DKW14}:\\ 
Given $N_n$ and $Y_0,\ldots,Y_{\rho-1}$, the $\Delta X_\rho+1$ lineages coalescing at the $\rho$-th merging event are chosen uniformly at random among the $X_{\rho-1}$ present ones. For the external branches, this means that, given $N_n$ and $Y_0,\ldots,Y_{\rho-1}$, the decrement $\Delta Y_\rho :=Y_{\rho-1}-Y_\rho$ has a hypergeometric distribution with parameters $X_{\rho-1},\, Y_{\rho-1}$ and $\Delta X_{\rho}+1$. 
In view of the formula of the $i$-th factorial moment of a hypergeometric distributed random variable, we obtain
\begin{align}\label{factmom}
\E\left[\left(\Delta Y_\rho\right)_i\left.\right|N_n,Y_0,\ldots,Y_{k-1}\right]\ =\ \left(\Delta X_\rho  +1\right)_i\frac{\left(Y_{\rho-1}\right)_i}{\left(X_{\rho-1}\right)_i} \qquad a.s.
\end{align}

Next, we look closer at the falling factorials. We have the following binomial identity 
\begin{align}\label{fallingfactorial}
\left(a-b\right)_r \eq (a)_r\sum_{i=0}^r\binom{r}{i}\left(-1\right)^i\frac{\left(b\right)_i}{(a)_i}
\end{align}
for $a,b\in\R $ and $r\in\N $. It follows from the Chu–Vandermonde identity (formula 1.5.7 in \cite{BM04})
\[(x+y)_r=\sum_{i=0}^r\binom{r}{i}(x)_i(y)_{r-i}\] 
with $x,y\in\R$ and the calculation
\begin{align*}
(a-b)_r&\eq(-1)^r(b+r-1-a)_r\\[1ex]
&\eq (-1)^r\sum_{i=0}^r\binom{r}{i}\left(b\right)_i(r-1-a)_{r-i}\\[1ex]
&\eq (-1)^r\sum_{i=0}^r\binom{r}{i}\left(b\right)_i(-1)^{r-i}\frac{(a)_r}{(a)_i}\,.
\end{align*} 
Returning to the number of external branches, we obtain from the identity \eqref{fallingfactorial} that
\[\left(Y_\rho\right)_r \eq (Y_{\rho-1})_r\sum_{i=0}^{r}\binom{r}{i}\left(-1\right)^{i}\frac{\left(\Delta Y_\rho\right)_i}{\left(Y_{\rho-1}\right)_{i}}\;.\] 
With equation \eqref{factmom}, we arrive at
\begin{align*}
\E\left[\left(Y_\rho\right)_r\left.\right|N_n,Y_0,\ldots,Y_{\rho-1}\right] \eq \left(Y_{\rho-1}\right)_r\sum_{i=0}^{r}\binom{r}{i}\left(-1\right)^i\frac{\left(\Delta X_\rho+1\right)_i}{\left(X_{\rho-1}\right)_i} \qquad a.s.
\end{align*}
Furthermore, combining the binomial identity \eqref{fallingfactorial} with the definition of $\Delta X_\rho$, we have
\[\left(X_\rho-1\right)_r \eq (X_{\rho-1})_r\sum_{i=0}^{r}\binom{r}{i}\left(-1\right)^i\frac{\left(\Delta X_\rho+1\right)_i}{\left(X_{\rho-1}\right)_{i}}\;.\]
Thus, 
\[\E\left[\left(Y_\rho\right)_r\left.\right|N_n,Y_0,\ldots,Y_{\rho-1}\right] \eq \left(Y_{\rho-1}\right)_r\frac{\left(X_\rho-1\right)_r}{\left(X_{\rho-1}\right)_r} \qquad a.s.\]
and, finally,
\[\frac{\E\left[\left(Y_\rho\right)_r\left.\right|N_n\right]}{\left(X_\rho\right)_r} \eq \frac{\E\left[\left(Y_{\rho-1}\right)_r\left.\right|N_n\right]}{\left(X_{\rho-1}\right)_r}\frac{\left(X_\rho-1\right)_r}{\left(X_\rho\right)_r} \eq \frac{\E\left[\left(Y_{\rho-1}\right)_r\left.\right|N_n\right]}{\left(X_{\rho-1}\right)_r}\left(1-\frac{r}{X_\rho}\right) \quad a.s.\]
The proof now finishes by iteration and taking $\E\left[Y_0\left|N_n\right.\right]=Y_0=X_0$ into account. 

\bigskip

(ii) The inequality for the conditional variance follows from the representation in (i) with $r=1$ and $r=2$:
\begin{align*}
\V\left(Y_\rho  \left|N_n\right.\right)& \eq X_\rho  \left(X_\rho -1\right)\prod_{j=1}^\rho \left(1-\frac{2}{X_j}\right)-X^2_\rho \prod_{j=1}^\rho \left(1-\frac{1}{X_j}\right)^2+X_\rho \prod_{j=1}^\rho \left(1-\frac{1}{X_j}\right)\\
& \loe X^2_\rho \prod_{j=1}^\rho \left(1-\frac{2}{X_j}\right)-X^2_\rho \prod_{j=1}^\rho \left(1-\frac{1}{X_j}\right)^2+X_\rho \prod_{j=1}^\rho \left(1-\frac{1}{X_j}\right)\\
& \loe  X_\rho \prod_{j=1}^\rho \left(1-\frac{1}{X_j}\right)\eq \E\left[Y_\rho\left|N_n\right.\right] \qquad a.s.
\end{align*}
This finishes the proof. 
\end{proof}

\bigskip

\section{Proof of Theorem \texorpdfstring{\protect\ref{reg_var}}{1.5}} \label{sec_proof_beta}

In order to study $\Lambda$-coalescents having a regularly varying rate of decrease $\mu$ with exponent $\alpha\in(1,2]$, we define
\[\kappa(x) \ := \ \frac{\mu(x)}{x}, \qquad x \geq 1,\]
for convenience. 
For $k\in\N $ and for real-valued random variables $Z_1,\ldots,Z_k$, denote the reversed order statistics by 
\[Z_{\left\langle 1\right\rangle} \ \geq\ \cdots \ \geq\ Z_{\left\langle k\right\rangle}.\]

\medskip
 
We now prove the following theorem that is equivalent to Theorem \ref{reg_var}. Recall the definition of $s_n$ in (\ref{new2}). 

\begin{theorem}\label{reg_var v2}
Suppose that the $\Lambda$-coalescent has a regularly varying rate $\mu$ with exponent $1<\alpha\leq 2$ and fix $\ell\in\N $. Then, as $n\to\infty$, the following convergence holds:
\[\kappa(s_n)\left(T_{\left\langle 1\right\rangle}^n,\ldots,T_{\left\langle \ell\right\rangle}^n\right)\ \stackrel{d}{\longrightarrow}\ \left(U_1,\ldots,U_\ell\right),\]
where $U_1>\cdots>U_\ell$ are the points in decreasing order of a Poisson point process $\Phi$ on $(0,\infty)$ with intensity measure $\phi(dx) \eq \alpha \left(\left(\alpha-1\right)x\right)^{-1-\al/(\al-1)}\;dx$. 
\end{theorem}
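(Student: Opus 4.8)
My plan is to exhibit the maximal external branches as the extremes of an essentially i.i.d.\ family, by applying the strong Markov property at the stopping time $\widetilde\rho_{c,n}\eq\inf\{t\geq 0:\,N_n(t)\loe cs_n\}$, and then to let $c\to\infty$. This is a double limit—first $n\to\infty$ for fixed $c$, then $c\to\infty$—and the real work lies in controlling the approximation uniformly in $n$. Throughout I use that the order-statistics statement of Theorem~\ref{reg_var v2} (for every $\ell$) is equivalent to the vague convergence of $\Phi^{\,n}$ on $(0,\infty)$ in Theorem~\ref{reg_var}: since $\int_1^\infty\phi\,dx<\infty$ while $\int_0^1\phi\,dx=\infty$, the limit has finitely many points above each level $x>0$ but accumulates at the origin, so it suffices to control, for each $x>0$, the points above $x$.

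\textbf{Step 1: counting the surviving external branches.} Let $\rho\eq\rho_{cs_n}$ and let $Y\eq Y_\rho$ be the number of external branches still extant at $\widetilde\rho_{c,n}$, i.e.\ the singleton blocks among the $X_\rho\approx cs_n$ present ones. Evaluating Lemma~\ref{Lambda}~(i) with Proposition~\ref{ErgLem2}, which gives $\prod_{j}(1-r/X_j)\approx(\tfrac{n}{\mu(n)}\tfrac{\mu(cs_n)}{cs_n})^r$, together with $X_\rho\approx cs_n$, I expect
\[\E\big[(Y)_r\mid N_n\big]\ \longrightarrow\ \Big(\frac{\mu(cs_n)}{\mu(s_n)}\Big)^r\ \longrightarrow\ c^{\alpha r},\]
where the last step uses $\mu(s_n)=\mu(n)/n$ and the regular variation of $\mu$. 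As $c^{\alpha r}$ are exactly the factorial moments of a Poisson$(c^\alpha)$ law, the method of moments yields $Y\stackrel{d}{\to}\mathrm{Poisson}(c^\alpha)$; in particular $\E Y\to c^\alpha\to\infty$ as $c\to\infty$.

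\textbf{Step 2: the residual lengths are asymptotically i.i.d.} By the strong Markov property the coalescent restarted at $\widetilde\rho_{c,n}$ is a $\Lambda$-coalescent on its $X_\rho\approx cs_n$ blocks, and the surviving external branches are precisely its singleton blocks. By exchangeability these form a uniform (hence conditionally an initial) subset of the blocks, so Corollary~\ref{cor}, applied in the $m$-coalescent with $m=X_\rho$, gives that the scaled residuals $\kappa(X_\rho)\widehat T^{\,n}_i$ converge jointly to i.i.d.\ variables $V_i$ with density \eqref{dens}. Using $\kappa(cs_n)\sim c^{\alpha-1}\kappa(s_n)$ this rewrites as $\kappa(s_n)\widehat T^{\,n}_i\to c^{-(\alpha-1)}V_i$. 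Meanwhile $\widecheck T^{\,n}_i\loe\widetilde\rho_{c,n}$, and Proposition~\ref{ErgLem1} gives $\kappa(s_n)\widetilde\rho_{c,n}\to c^{1-\alpha}/(\alpha-1)$: a deterministic shift of the \emph{same} order $c^{1-\alpha}$ as the bulk of the residuals, yet negligible for the \emph{large} $V_i$ that produce points above a fixed level $x>0$. Conditional independence of $Y$ and $(V_i)$ follows from the strong Markov property once one uses $X_\rho/(cs_n)\to 1$.

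\textbf{Step 3: assembling the limit and interchanging the limits.} For $c$ large the points of $\Phi^{\,n}$ above $x>0$ come only from surviving branches, and Steps~1--2 show they converge to a Poisson$(c^\alpha)$ number of i.i.d.\ points at positions $c^{-(\alpha-1)}V_i$; a Poisson number of i.i.d.\ points is a Poisson point process $\Phi_c$ whose intensity is $c^\alpha$ times the law of $c^{-(\alpha-1)}V$. Its mean number of points above $x$ is $c^\alpha\widebar F(c^{\alpha-1}x)\to((\alpha-1)x)^{-\alpha/(\alpha-1)}=\phi\big((x,\infty)\big)$ as $c\to\infty$, where $\widebar F(t)=(1+(\alpha-1)t)^{-\alpha/(\alpha-1)}$ is the tail of \eqref{dens}; hence $\Phi_c\to\Phi$ by vague convergence of the (Poisson) intensities. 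The \textbf{main obstacle} is the interchange of the two limits: one must show that the discrepancy between $\Phi^{\,n}$ and $\Phi_c$ above $x$—arising from the shift $\widecheck T^{\,n}_i$ and from the non-surviving branches—can be made uniformly small in $n$ by taking $c$ large, and then conclude by a standard $3\e$ argument for double limits in weak convergence. Quantifying this uniform error, rather than the two individual limits, is where the difficulty concentrates.
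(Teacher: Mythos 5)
Your overall strategy is the paper's own: the same stopping time $\widetilde\rho_{c,n}$, the same splitting $T_i^n=\widecheck{T}_i^n+\widehat{T}_i^n$, the strong Markov property combined with Corollary \ref{cor} for the residual lengths, and the double limit $n\to\infty$ followed by $c\to\infty$. The one structural deviation is your Step 1: you want the survivor count $Y_{\rho_{c,n}}$ to converge in distribution to a Poisson$(c^\al)$ law, so that at fixed $c$ the surviving points form an exact Poisson point process $\Phi_c$. The paper never proves (or needs) this: it only establishes concentration, $\limsup_n\PP\left(|c^{-\al}Y_{\rho_{c,n}}-1|\geq\e\right)\to 0$ as $c\to\infty$ (Proposition \ref{EV} (iii)), which requires nothing beyond the conditional first and second moments of Lemma \ref{Lambda}, and then works with order statistics of roughly $c^\al$ i.i.d.\ variables (Lemma \ref{lem:key}). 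Since a Poisson$(c^\al)$ count and a deterministic count $c^\al$ yield the same point process in the limit $c\to\infty$, concentration suffices.

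This difference is where your argument has a genuine gap. What your computation gives is convergence of the \emph{conditional} factorial moments $\E[(Y_{\rho_{c,n}})_r\mid N_n]\to c^{\al r}$ \emph{in probability}; the method of moments, as you invoke it, needs convergence of the unconditional moments $\E[(Y_{\rho_{c,n}})_r]$, and the passage from the former to the latter requires uniform integrability that fails within the theorem's hypotheses. Concretely, take $\Lambda(dp)=2\,\delta_0(dp)+(1-p)^{-1/2}dp$, so that $\mu(n)\sim n^2$, $\al=2$, $s_n\sim\sqrt{n}$. While the chain is still above $n/2$ blocks, a single $p$-merger with $1-p\leq cs_n/n$ occurs with probability of order $n^{-5/4}$ (per-jump rate of order $\sqrt{s_n/n}$ against total rate of order $n^2$, over order $n$ jumps); on that event the chain lands below $cs_n$ with, up to constants, $cs_n$ singletons still intact, i.e.\ $Y_{\rho_{c,n}}$ of order $s_n$. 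This rare event contributes order $s_n^{\,r}\,n^{-5/4}=n^{r/2-5/4}$ to $\E[(Y_{\rho_{c,n}})_r]$, which diverges for every $r\geq 3$. So the third and higher factorial moments do not converge to $c^{\al r}$, and ``the method of moments yields Poisson'' is unjustified as written. (The Poisson limit itself is true, but proving it forces you to stay conditional — e.g.\ a subsequence argument applying the moment method to the conditional laws, then dominated convergence on bounded functionals. This is exactly why the paper truncates with ``$\wedge\,1$'' \emph{before} taking expectations in the proof of Proposition \ref{EV} (iii): the conditional quantities are well behaved, the unconditional moments are not.)

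Beyond this, the part you explicitly defer — the uniform-in-$n$ control of the shift $\widecheck{T}_i^n$ and of the survivor count, needed to interchange the limits — is not a residual technicality but the bulk of the paper's proof of Theorem \ref{reg_var v2}: it is carried out there with Lipschitz test functions and the two uniform statements of Proposition \ref{EV} (i) and (iii), feeding into the chain of estimates that ends in \eqref{key} and \eqref{proof}. You correctly identify the obstacle and the raw ingredients (Propositions \ref{ErgLem1}, \ref{ErgLem2}, Lemma \ref{Lambda}) are indeed sufficient, but as submitted the proposal proves the two individual limits and leaves the interchange as a declared difficulty, so it is an outline of the paper's argument with one invalid step rather than a complete alternative proof.
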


\medskip

For the rest of this section, keep the stopping times 
\begin{align}\label{stopp1}
\widetilde{\rho}_{c,n}\ :=\ \inf\left\{t\geq 0:\, N_n(t)\leq c s_n\right\}
\end{align}
in mind and define their discrete equivalents 
\begin{align}\label{stopp2}
\rho_{c,n}\ :=\ \min\left\{j\geq 0:\, X_j\leq c s_n\right\}
\end{align}
for $c>0$. 
Later, we shall apply Proposition \ref{ErgLem2} to the latter stopping times, in view of \eqref{s_n} and
\begin{align} \label{tilde_0}
\int_{cs_n}^n\frac{dx}{\mu(x)} \eq \OO\left(\int_{ cs_n}^n x^{-\alpha+\e}dx\right) \eq \OO\left(s_n^{1-\al+\e}\right) \eq \oo(1)
\end{align}
for $0<\e<\al-1$ because $\mu$ is regularly varying with exponent $\al$.\\

The next proposition deals with properties of the stopping times from \eqref{stopp1} and \eqref{stopp2}. It justifies the choice of $s_n$, it shows that $X_{\rho_{c,n}}$ diverges at the same rate as $s_n$ and that 
$Y_{\rho_{c,n}}$ is uniformly bounded in $n$. In particular, it reveals that for large $c$ there are with high probability external branches still present up to the times $\widetilde{\rho}_{c,n}$.   

\medskip

\begin{prop}\label{EV}
Assume that the $\Lambda$-coalescent has a regularly varying rate $\mu$ with exponent $\al\in(1,2]$. Then we have:  
\begin{enumerate}
\item For each $\e>0$, there exists $c_\e>0$ such that for all $c\geq c_\e$,  
\[\lim_{n\to\infty}\PP\left(\kappa(s_n)\,\widetilde{\rho}_{c,n}\geq\e\right) \eq 0.\]
\item For each $c>0$, as $n\to\infty$, 
\[X_{\rho_{c,n}} \eq cs_n +\oo_P(s_n).\]  
\item For each $\e>0$,
\[\limsup_{n\to\infty}\PP\left(\left|c^{-\al}\,Y_{\rho_{c,n}}-1\right|\geq\e\right)\ \stackrel{c\to\infty}{\longrightarrow}\ 0.\]
\end{enumerate}
\end{prop}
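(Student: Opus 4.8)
The plan is to obtain all three parts from the two laws of large numbers (Propositions~\ref{ErgLem1} and~\ref{ErgLem2}), the overshoot control used in the proof of Theorem~\ref{thm:int} (Lemma~3 of \cite{DK18}), the conditional moment formulas of Lemma~\ref{Lambda}, and Karamata-type asymptotics for the regularly varying functions $\mu$ and $\kappa$. The recurring device is the defining identity $\mu(s_n)=\mu(n)/n$, equivalently $\kappa(s_n)=\mu(n)/(ns_n)$; it is precisely this relation that makes the $c$-dependent limits come out in closed form. Since each statement fixes $c$ and lets $n\to\infty$ first, I need no uniformity in $c$ inside the inner limit.

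For (i), I would feed the hypotheses \eqref{s_n} and \eqref{tilde_0} into Proposition~\ref{ErgLem1} to get $\widetilde{\rho}_{c,n}=(1+\oo_P(1))\int_{cs_n}^n dx/\mu(x)$. As $\mu$ is regularly varying with exponent $\al>1$ and $cs_n=\oo(n)$, Karamata's theorem yields $\int_{cs_n}^n dx/\mu(x)\sim cs_n/((\al-1)\mu(cs_n))=1/((\al-1)\kappa(cs_n))$, the contribution near the upper endpoint being negligible because $s_n=\oo(n)$ and $\al>1$. Multiplying by $\kappa(s_n)$ and using that $\kappa$ is regularly varying with exponent $\al-1$ (so $\kappa(s_n)/\kappa(cs_n)\to c^{-(\al-1)}$), I obtain $\kappa(s_n)\widetilde{\rho}_{c,n}\to c^{-(\al-1)}/(\al-1)$ in probability. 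Since this deterministic limit tends to $0$ as $c\to\infty$, picking $c_\e$ so that it stays below $\e$ for $c\geq c_\e$ gives (i). For (ii), exactly as in the proof of Theorem~\ref{thm:int} the overshoot estimate of \cite{DK18} reads $X_{\rho_{c,n}}=cs_n+\OO_P(\Delta X_{\rho_{c,n}})=cs_n+\oo_P(X_{\rho_{c,n}})$; rearranging gives $X_{\rho_{c,n}}=cs_n(1+\oo_P(1))=cs_n+\oo_P(s_n)$ for fixed $c$.

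Part (iii) is the heart of the matter. Using Lemma~\ref{Lambda}(i) with $r=1$, I would write $\E[Y_{\rho_{c,n}}\mid N_n]=X_{\rho_{c,n}}\prod_{j=1}^{\rho_{c,n}}(1-1/X_j)$ and expand the logarithm of the product as $-\sum_{j=1}^{\rho_{c,n}}1/X_j+\OO(\sum_j X_j^{-2})$, where the quadratic remainder is $\oo_P(1)$ by the bound $\sum_{j<k}X_j^{-2}\le 2X_{k-1}^{-1}$ preceding \eqref{Mac}, and the index shift against Proposition~\ref{ErgLem2} costs only $\oo_P(1)$. Proposition~\ref{ErgLem2} with $r_n=cs_n$ then evaluates the main sum as $\log\!\big(\tfrac{\mu(n)}{n}\tfrac{cs_n}{\mu(cs_n)}\big)+\oo_P(1)$, so that, inserting (ii),
\[\E[Y_{\rho_{c,n}}\mid N_n]\eq \frac{n\,\mu(cs_n)}{\mu(n)}\,(1+\oo_P(1)).\]
Now $\mu(cs_n)\sim c^\al\mu(s_n)$ for fixed $c$ and $\mu(s_n)=\mu(n)/n$ cancel the remaining $n$- and $s_n$-dependence, leaving $\E[Y_{\rho_{c,n}}\mid N_n]=c^\al(1+\oo_P(1))$. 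Finally I would combine this with the variance bound $\V(Y_{\rho_{c,n}}\mid N_n)\le\E[Y_{\rho_{c,n}}\mid N_n]$ of Lemma~\ref{Lambda}(ii): a conditional Chebyshev inequality bounds $\PP(|Y_{\rho_{c,n}}-\E[Y_{\rho_{c,n}}\mid N_n]|\ge\tfrac\e2 c^\al)$ by a quantity of order $c^{-\al}$ as $n\to\infty$, while the centering converges to $c^\al$; splitting $|c^{-\al}Y_{\rho_{c,n}}-1|$ into this fluctuation part and the deterministic-limit part, and letting first $n\to\infty$ and then $c\to\infty$, sends the probability to $0$.

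The main obstacle lives entirely in (iii) and is of two kinds. First, one must verify that the several independent $\oo_P(1)$ errors---from Propositions~\ref{ErgLem1} and~\ref{ErgLem2} and from (ii)---assemble into a single clean constant $c^\al$ with no residual growth in $n$; this hinges on the exact cancellation forced by $\mu(s_n)=\mu(n)/n$ together with $\mu(cs_n)/\mu(s_n)\to c^\al$, and any looser bookkeeping would leave a spurious $s_n$-factor. Second, the conclusion is an iterated limit ($\limsup_n$ followed by $c\to\infty$), so the Chebyshev estimate must be kept in the $c$-explicit form $\OO(c^{-\al})$ with a constant that survives the inner limit; the inequality $\V\le\E$ of Lemma~\ref{Lambda}(ii) is exactly what produces this decay in $c$, and without it the fluctuation term could not be shown to vanish.
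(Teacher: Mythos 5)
Your proposal is correct and follows essentially the same route as the paper's proof: part (i) via Proposition~\ref{ErgLem1} and Karamata asymptotics for the regularly varying $\mu$, part (ii) via the overshoot estimate (Lemma~3 of \cite{DK18}), and part (iii) by computing $\E[Y_{\rho_{c,n}}\mid N_n]=c^\al+\oo_P(1)$ from Lemma~\ref{Lambda}(i), the Taylor expansion as in \eqref{Mac}, Proposition~\ref{ErgLem2} and the identity $\mu(s_n)=\mu(n)/n$, followed by conditional Chebyshev with the bound $\V\le\E$ of Lemma~\ref{Lambda}(ii). The only differences are cosmetic: you evaluate the sum $\sum_j 1/X_j$ at $r_n=cs_n$ directly rather than at $X_{\rho_{c,n}}$, and in (i) you correctly attribute the $\widetilde\rho$-asymptotics to Proposition~\ref{ErgLem1} (the paper's citation of Proposition~\ref{ErgLem2} there appears to be a slip) and obtain full convergence in probability where the paper settles for the upper bound, both of which suffice.
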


\begin{proof}
(i) Because $\mu$ is regularly varying with exponent $\al>1$, we have
\[\int_{cs_n}^\infty\frac{dx}{\mu(x)} \ \sim\ \frac{1}{\al-1}\frac{cs_{n}}{\mu(cs_n)}\ \sim \ \frac{1}{\al-1}c^{1-\al}\frac{1}{\kappa(s_n)}\]
as $n\to\infty$. 
Now Proposition \ref{ErgLem2} implies that
\[\kappa(s_n)\,\widetilde{\rho}_{c,n} \loe \frac{1}{\al-1}c^{1-\al}(1+\oo_P(1)),\]
which entails the claim.  \medskip

(ii) Because of \eqref{tilde_0}, we may use Lemma 3 (ii) of \cite{DK18}. In conjunction with the definition of $\rho_{c,n}$, therefore, we obtain
\begin{align*}
\frac{X_{\rho_{c,n}}}{X_{\rho_{c,n}-1}} \eq 1 - \frac{\Delta X_{\rho_{c,n}}}{X_{\rho_{c,n}-1}} \eq 1+\oo_P(1)
\end{align*}
as $n\to\infty$. 
This implies the statement because of $X_{\rho_{c,n}}\leq cs_n<X_{\rho_{c,n}-1}$.  \medskip

(iii) We first prove that
\begin{align}\label{condExp}
\E\left[Y_{\rho_{c,n}}|N_n\right] \eq c^\alpha + \oo_P\left(1\right)
\end{align}
as $n\to\infty$. 
Lemma \ref{Lambda} (i), together with a Taylor expansion as in \eqref{Mac}, provides 
\begin{align*}
\E\left[Y_{\rho_{c,n}}\left.\right|N_n\right] & = (X_{\rho_{c,n}}-1) \exp{\left(-\sum_{j=0}^{\rho_{c,n}-1}\frac{1}{X_j}+\OO\left(X_{\rho_{c,n}-1}^{-1}\right)\right)} 
\end{align*}
as $n\rightarrow\infty$. Furthermore, \eqref{s_n} and \eqref{tilde_0} allow us to apply Proposition \ref{ErgLem2} yielding    
\begin{align}\label{Prop.3}
\sum_{j=0}^{\rho_{c,n}-1}\frac{1}{X_j} \eq \log{\left(\frac{\kappa(n)}{\kappa(X_{\rho_{c,n}})}\right)}+\oo_P(1)
\end{align}
as $n\to\infty$. 
Combining statement (ii) with Lemma \ref{prop} (ii), therefore, we arrive at 
\[\E\left[Y_{\rho_{c,n}}\left.\right|N_n\right] \eq n\,\frac{\mu\big(X_{\rho_{c,n}}\big)}{\mu(n)}\left(1+\oo_P\left(1\right)\right)
\eq n\,\frac{\mu(cs_n)}{\mu(n)}\left(1+\oo_P\left(1\right)\right)
\]
 \begin{samepage}\enlargethispage{3\baselineskip}
so that the regular variation of $\mu$ and the definition of $s_n$ imply \eqref{condExp}.  
Thus, in the upper bound
\begin{align*}
\begin{split}
\PP\left(\left|Y_{\rho_{c,n}}-c^\al\right|\geq\e\, c^\al\right) & \ \loe \
 \PP\left(\left|\E\left[Y_{\rho_{c,n}}\left|N_n\right.\right]-c^\al\right|\geq \frac{\e}{2}\, c^\al\right)\\[1ex]
&\hspace*{100pt} +\ \PP\left(\left|Y_{\rho_{c,n}}-\E\left[Y_{\rho_{c,n}}\left|N_n\right.\right]\right|\geq\frac{\e}{2}\,c^\alpha\right)
\end{split}
\end{align*}
with $\e>0$,  
the first right-hand probability converges to $0$. For the second one, Chebyshev's inequality and Lemma \ref{Lambda} (ii) imply that
 \end{samepage}
\begin{align*}
\PP\big(\big|Y_{\rho_{c,n}}-\E\big[Y_{\rho_{c,n}}\big| N_n\big]\big|\geq\,\e\,c^\alpha\big)  &\eq \E\left[\PP\left(\left|Y_{\rho_{c,n}}-\E\left[Y_{\rho_{c,n}}\left|N_n\right.\right]\right|\geq\e\,c^\alpha\left.\right|N_n\right) \right]\\[1ex]
& \loe  \E\left[\frac{\V\left(Y_{\rho_{c,n}}\left|N_n\right.\right)}{\e^2c^{2\alpha}}\wedge 1\right]\\[1ex]
& \loe  \E\left[\frac{\E\left[Y_{\rho_{c,n}}\left|\right.N_n\right]}{\e^{2}\,c^{2\alpha}}\wedge 1\right].
\end{align*}

From \eqref{condExp} and dominated convergence, we conclude
\[\PP\big(\big|Y_{\rho_{c,n}}-\E\big[Y_{\rho_{c,n}}\big| N_n\big]\big|\geq\,\e\,c^\alpha\big) \loe \e^{-2}c^{-\al}+\oo(1)\]
as $n\rightarrow\infty$, which provides the claim. 
\end{proof}

\bigskip

For the following lemma, let us recall the subdivided external branch lengths 

\[\widecheck{T}^{n}_{i}\ :=\ T^n_{i}\wedge \widetilde{\rho}_{c,n} \quad \text{ and }\quad \widehat{T}^{n}_i\ :=\ T^n_{i}-\widecheck{T}^{n}_{i}\]

for $1\leq i\leq n$ and let 
\[\beta\ := \ \frac{\alpha-1}{\alpha}.\] 

\medskip

\begin{lem}\label{lem:key} 
Suppose that the $\Lambda$-coalescent has a regularly varying rate $\mu$ with exponent $\al\in(1,2]$.   
Then, for $\ell,y\in\N$, there exist random variables $U_{1,y}\geq\ldots\geq U_{\ell,y}$ such that the following convergence results hold: 
\begin{enumerate}
\item For any bounded continuous function $g:\R^\ell\to \R$ and for fixed $y\geq\ell$, as $n\to\infty$,
\[\E\left[g\left(\kappa(cs_n)\widehat{T}^{n}_{\left\langle 1\right\rangle},\ldots,\kappa(cs_n)\widehat{T}^{n}_{\left\langle \ell \right\rangle}\right)\, \big|\,Y_{\rho_{c,n}}=y,X_{\rho_{c,n}}\right]\  \longrightarrow\ \E\left[g \left(U_{1,y},\ldots,U_{\ell,y}\right)\right]\]
in probability. 
\item For fixed $\ell\in\N$, as $y\to\infty$, 
\[y^{-\beta}\left(U_{1,y},\ldots,U_{\ell,y}\right)\ \stackrel{d}{\longrightarrow} \ \left(U_{1},\ldots,U_{\ell}\right),\]
where $U_{1}>\cdots> U_{\ell}$ are the points of the Poisson point process of Theorem \ref{reg_var v2}. 
\end{enumerate}
\end{lem}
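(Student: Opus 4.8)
The plan is to treat the two parts separately; the crux is that at the stopping time $\widetilde{\rho}_{c,n}$ the coalescent restarts, so the residual lengths $\widehat{T}^n_i$ are simply the external lengths of a fresh $\Lambda$-coalescent, to which Corollary \ref{cor} applies, and then (ii) is a classical extreme-value statement for the resulting i.i.d.\ structure.

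For part (i), I would start from the strong Markov property of the $n$-coalescent at $\widetilde{\rho}_{c,n}$. Conditionally on $X_{\rho_{c,n}}=m$ and $Y_{\rho_{c,n}}=y$, the evolution after $\widetilde{\rho}_{c,n}$ is a $\Lambda$-coalescent started from $m$ blocks, exactly $y$ of which are singletons. Because the merger rates $\lambda_{b,k}$ depend only on the number of blocks, the residual dynamics are exchangeable in the blocks and insensitive to block contents; hence the residual merging times of the $y$ singleton blocks are jointly distributed as the external lengths $T^m_1,\dots,T^m_y$ of an $m$-coalescent, and since $y\geq\ell$ the $\ell$ largest of all $\widehat{T}^n_i$ coincide with the $\ell$ largest of these. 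Corollary \ref{cor} gives $\kappa(m)(T^m_1,\dots,T^m_y)\stackrel{d}{\to}(T_1,\dots,T_y)$ with the $T_i$ i.i.d.\ of density \eqref{dens}, and the continuous mapping theorem (passing to reversed order statistics) identifies $U_{i,y}$ as the $i$-th largest among $T_1,\dots,T_y$. It then remains to replace the intrinsic scaling $\kappa(X_{\rho_{c,n}})$ by the prescribed $\kappa(cs_n)$: since $\kappa(x)=\mu(x)/x$ is regularly varying with exponent $\al-1>0$ and $X_{\rho_{c,n}}=cs_n+\oo_P(s_n)$ by Proposition \ref{EV} (ii), the ratio $\kappa(cs_n)/\kappa(X_{\rho_{c,n}})\to 1$ in probability, and boundedness and continuity of $g$ absorb this discrepancy. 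Finally, to obtain the conclusion \emph{in probability}, I would write the conditional expectation as a function $H_n(m)$ of the deterministic size $m$; the deterministic convergence above shows $H_n(m_n)\to\E[g(U_{1,y},\dots,U_{\ell,y})]$ for every sequence with $m_n/s_n\to c$, and since $X_{\rho_{c,n}}/s_n\to c$ in probability, the composition $H_n(X_{\rho_{c,n}})$ converges to the same limit.

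For part (ii), the variables $U_{1,y}\geq\dots\geq U_{\ell,y}$ are, by the previous step, the $\ell$ upper order statistics of $y$ i.i.d.\ samples from the density \eqref{dens}. From Theorem \ref{iff} their common tail is $\widebar F(t)=(1+(\al-1)t)^{-\al/(\al-1)}\sim((\al-1)t)^{-1/\beta}$ as $t\to\infty$, so the law is of Pareto type with tail index $1/\beta=\al/(\al-1)$, hence in the Fréchet max-domain of attraction. The plan is to invoke the classical point-process limit for sample extremes with regularly varying tail: with the normalisation $a_y=y^{\beta}$ one has $y\,\widebar F(y^{\beta}x)\to((\al-1)x)^{-\al/(\al-1)}$, whence the empirical point process $\sum_{i\leq y}\delta_{y^{-\beta}T_i}$ converges to a Poisson point process whose intensity is $-\frac{d}{dx}((\al-1)x)^{-\al/(\al-1)}$, which equals $\phi$ exactly. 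Reading off the $\ell$ largest points then yields $y^{-\beta}(U_{1,y},\dots,U_{\ell,y})\stackrel{d}{\to}(U_1,\dots,U_\ell)$.

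The main obstacle lies in part (i): transferring the distributional convergence of Corollary \ref{cor}, valid only for each fixed deterministic coalescent size, to the random and $n$-dependent size $X_{\rho_{c,n}}$, while simultaneously swapping the scaling sequence. Making the ``in probability'' conclusion rigorous requires combining Proposition \ref{EV} (ii), the uniform-convergence property of the regularly varying $\kappa$, and the subsequence argument sketched above, with care that the approximations are uniform over the relevant range of sizes $m\sim cs_n$. Part (ii) is essentially bookkeeping once the tail index is identified, the only point to verify being that the clean normalisation $y^{-\beta}$ reproduces precisely the intensity $\phi$ of Theorem \ref{reg_var v2}.
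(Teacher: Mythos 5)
Your proposal is correct and follows the same skeleton as the paper's proof of Lemma \ref{lem:key}: the strong Markov property and exchangeability at $\widetilde{\rho}_{c,n}$ identify the residual lengths as ordinary external lengths of a coalescent of size $X_{\rho_{c,n}}$; Corollary \ref{cor} plus continuous mapping identify $U_{i,y}$ as the $i$-th largest of $y$ i.i.d. variables with density \eqref{dens}; regular variation of $\kappa$ together with Proposition \ref{EV} (ii) justifies exchanging $\kappa(X_{\rho_{c,n}})$ for $\kappa(cs_n)$; and one then passes from deterministic to random sizes. The two places where your execution differs are technical rather than structural. First, for the ``in probability'' conclusion in (i), the paper invokes Skorohod's representation theorem (realizing $X_{\rho_{c,n}}/(cs_n)\to 1$ almost surely on a common probability space and composing with $\widebar{g}_y$), whereas you use a subsequence/uniformity argument for the functions $H_n(m)$ along sequences $m_n/s_n\to c$; both are valid, Skorohod being slightly quicker to write down, your route making explicit the uniformity over $m\sim cs_n$ that Skorohod hides. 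Second, for (ii), the paper computes the density of $y^{-\beta}(U_{1,y},\ldots,U_{\ell,y})$ directly from the order-statistics formula \eqref{jointdens} and passes to the pointwise limit (Scheff\'e's lemma then giving convergence in distribution), whereas you cite the classical extreme-value point-process theorem for regularly varying tails: $y\,\widebar{F}(y^{\beta}x)\to((\al-1)x)^{-\al/(\al-1)}$ gives convergence of the empirical point process $\sum_{i\le y}\delta_{y^{-\beta}T_i}$ to the Poisson process with intensity $\phi$, after which reading off the $\ell$ largest points is an a.s. continuous functional of the limit. The citation route is shorter but imports a standard theorem; the paper's computation is self-contained and produces the explicit limiting joint density. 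Your bookkeeping is exact in either case: $1/\beta=\al/(\al-1)$ and $-\frac{d}{dx}\bigl((\al-1)x\bigr)^{-\al/(\al-1)}=\al\bigl((\al-1)x\bigr)^{-1-\al/(\al-1)}$, which is precisely $\phi$.
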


\begin{proof}
(i) Let 
\[\widebar{g}_y(x,z) := \E\left[g\left(\kappa(z)\widehat{T}^{n}_{\left\langle 1\right\rangle},\ldots,\kappa(z)\widehat{T}^{n}_{\left\langle \ell \right\rangle}\right)\, \big|\,Y_{\rho_{c,n}}=y,X_{\rho_{c,n}}=x\right]\]
for $x>y, z\ge 2$. 
Observe that due to the strong Markov property, given the events $X_{\rho_{c,n}}=x$ and  $Y_{\rho_{c,n}}=y$, the $y$ remaining external branches evolve as $y$ ordinary external branches out of a sample of $x$ many individuals. From these $y$ external branches, we consider the $\ell$ largest ones. 
Hence, since $\kappa$ is regularly varying, Corollary \ref{cor} yields that 
\[\widebar{g}_y(x,z) \ \longrightarrow \  \E\left[g \left(U_{1,y},\ldots,U_{\ell,y}\right)\right]\]
as $x\to\infty$ and $z/x\to 1$. Here, from established formulae for order statistics of i.i.d random variables,  $(U_{1,y},\ldots,U_{\ell,y})$ has the density 
\begin{align} \label{jointdens}
\ell!\binom{y}{\ell}F\left( u_\ell\right)^{y-\ell}\ \prod_{i=1}^{\ell}\,f\big(u_i\big) du_1\cdots du_\ell,
\end{align}
with $u_1 \geq \cdots \geq u_\ell\geq 0$, 
where $f$ is the density from formula \eqref{dens} and $F$ its cumulative distribution function.   

Now, it follows from Skorohod's representation theorem that one can construct random variables $X_n'$ on a common probability space with the properties that $X_n'$ and $X_{\rho_{c,n}}$ have the same distribution for each $n \ge 1$ and that, in view of Proposition \ref{EV} (ii), the random variables $X_n'/cs_n$ converge to 1 a.s. It follows
\[ \widebar g_y(X_n', cs_n) \to \E\left[g \left(U_{1,y},\ldots,U_{\ell,y}\right)\right]  a.s. \]
and, therefore,
\[ \widebar g_y(X_{\rho_{c,n}}, cs_n) \to \E\left[g \left(U_{1,y},\ldots,U_{\ell,y}\right)\right]   \]
in probability, which is our claim. \bigskip


(ii) Note that 
\[y^{\beta+1}f(y^\beta u) \eq y^{\beta+1}\al\left(1+(\al-1)uy^\beta\right)^{-1-1/\beta} \ \stackrel{y\to\infty}{\longrightarrow} \ \al\left(\left(\al-1\right)u\right)^{-1-1/\beta}\]
and
\[F(y^\beta u)^{y-\ell} \eq \left[1-\left(1+(\al-1)y^\beta u\right)^{-1/\beta}\right]^{y-\ell} \ \stackrel{y\to\infty}{\longrightarrow} \ \exp{\left(-\left((\alpha-1)\,u\right)^{-1/\beta}\right)}. \]
Consequently, 
\[\ell!\binom{y}{\ell}F\left( y^\beta u_\ell\right)^{y-\ell}\ \prod_{i=1}^{\ell}\left[\,f\big(y^\beta u_i\big)y^\beta du_i\right],\]
being the density of $y^{-\beta}\left(U_{1,y},\ldots,U_{\ell,y}\right)$, has the limit 
\[\exp{\left(-\left((\alpha-1)\,u_\ell\right)^{-1/\beta}\right)}\,\prod_{i=1}^{\ell}\, \alpha\left((\alpha-1)\,u_i\right)^{-1-1/\beta}du_1\cdots du_\ell\] 
as $y\to\infty$. 
Indeed, this is the joint density of the rightmost points $U_1>\cdots>U_\ell$ of the Poisson point process given in Theorem \ref{reg_var v2}.  
\end{proof}

\bigskip
\newpage

\begin{proof}[Proof of Theorem \ref{reg_var v2}] 
The proof consists of two parts. First, we consider $(\widehat{T}^n_{\left\langle 1 \right\rangle},\ldots,\widehat{T}^n_{\left\langle \ell \right\rangle})$ in the limits $n\to\infty$ and then $c\to\infty$, which gives already the limit of our theorem.  
Consequently, in the second step it remains to show that $(\widecheck{T}^n_{\left\langle 1 \right\rangle},\ldots,\widecheck{T}^n_{\left\langle \ell \right\rangle})$ can asymptotically be  neglected. 

\medskip

In the first step, we normalize $\widehat{T}^n_{\left\langle j \right\rangle}$ not by  $\kappa(s_n)$ but by the factor $Y_{\rho_{c,n}}^{-\beta}\kappa(c s_n)$, which is equivalent in the limit $c \to \infty$ because of Proposition \ref{EV} (iii).
Thus, we set 
\[V_{c,n} \ :=\ \kappa(cs_n)\left(\widehat{T}^n_{\left\langle 1 \right\rangle},\ldots,\widehat{T}^n_{\left\langle \ell \right\rangle}\right).\]
Let $g:\R^\ell\rightarrow\R$ be a continuous function and assume that $\max{|g|}\leq 1$. 
For $c>0$, we obtain via the law of total expectation and Lemma \ref{lem:key} (i) that
\begin{align*}
\Big|&\E\Big[g\Big(Y_{\rho_{c,n}}^{-\beta}\,V_{c,n}\Big)\,\Big|\,X_{\rho_{c,n}}\Big]\ -\ \E\left[g\left(U_{1},\ldots,U_{\ell}\right)\right]\Big|\\[2ex]
&\loe \sum_{c/2\leq y\leq 2c}\Big|\E\left[\left.g\left(y^{-\beta}\,V_{c,n}\right)\,\right|\,Y_{\rho_{c,n}}=y,\,X_{\rho_{c,n}}\right]\ -\ \E\left[g\left(U_{1},\ldots,U_{\ell}\right)\right]\Big|\hspace{-1pt}\cdot\hspace{-1pt}\PP\left(Y_{\rho_{c,n}}=y\,|\,X_{\rho_{c,n}}\right)\\[1ex]
& \qquad\qquad +\ 2\PP\left(\left|Y_{\rho_{c,n}}-c^\al\right|\geq c^\al/2\,|\,X_{\rho_{c,n}}\right)\\[1.5ex]
&\loe  \max_{c/2\leq y\leq 2c}\Big|\E\left[\left.g\left(y^{-\beta}\,V_{c,n}\right)\,\right|\,Y_{\rho_{c,n}}=y,\,X_{\rho_{c,n}}\right]\ -\ \E\left[g\left(U_{1},\ldots,U_{\ell}\right)\right]\Big| \\[1ex]
& \qquad\qquad +\ 2\PP\left(\left|Y_{\rho_{c,n}}-c^\al\right|\geq c^\al/2\,|\,X_{\rho_{c,n}}\right) \\[1.5ex]
&\loe  \max_{c/2\leq y\leq 2c}\Big|\E\left[g\left(y^{-\beta}U_{1,y},\ldots,y^{-\beta}U_{\ell,y}\right)\right]\ -\ \E\left[g\left(U_{1},\ldots,U_{\ell}\right)\right]\Big|  +\oo_P(1)\\[1ex]
& \qquad\qquad +\ 2\PP\left(\left|Y_{\rho_{c,n}}-c^\al\right|\geq c^\al/2\,|\,X_{\rho_{c,n}}\right)
\end{align*}
as $n\to\infty$. Without loss of generality, we may assume that the $\oo_P(\cdot)$- term is bounded by 1. Hence, taking expectations, applying Jensen's inequality to the left-hand side and using dominated convergence, we obtain 
\begin{align*}
&\Big|\E\Big[g\Big(Y_{\rho_{c,n}}^{-\beta}\,V_{c,n}
\Big)\Big]\ -\ \E\left[g\left(U_{1},\ldots,U_{\ell}\right)\right]\Big|\\[1.5ex]
&\qquad\loe \  \max_{c/2\leq y\leq 2c}\Big|\E\left[g\left(y^{-\beta}U_{1,y},\ldots,y^{-\beta}U_{\ell,y}\right)\right]\ -\ \E\left[g\left(U_{1},\ldots,U_{\ell}\right)\right]\Big|\ +\ \oo(1)\\[1ex]
&\qquad\qquad \qquad +\ 2\PP\left(\left|Y_{\rho_{c,n}}-c^\al\right|\geq c^\al/2\right)
\end{align*}
as $n\to\infty$. 
Then Lemma \ref{lem:key} (ii) and Proposition \ref{EV} (iii) entail
\begin{align}\label{key}
\limsup_{n\to\infty}\Big|\E\Big[g\Big(Y_{\rho_{c,n}}^{-\beta}\,V_{c,n}
\Big)\Big]-\E\left[g\left(U_{1},\ldots,U_{\ell}\right)\right]\Big|\ \stackrel{c\to\infty}{\longrightarrow}\ 0. 
\end{align}

This finishes the first part of our proof. For the second one, we additionally assume that $g$ is a Lipschitz continuous function with Lipschitz constant $1$ (in each coordinate) and prove that
\begin{align}\label{proof}
\E\left[g\left(\kappa(s_n)\,T^n_{\left\langle 1 \right\rangle},\ldots,\kappa(s_n)\,T^n_{\left\langle \ell \right\rangle}\right)\right]\ \stackrel{n\to\infty}{\longrightarrow} \ \E\left[g\left(U_{1},\ldots,U_{\ell}\right)\right],
\end{align}
which implies the theorem's statement.  
For $\e>0$, we have 
\begin{align*}
\big|\E&\big[g\big(\kappa(s_n)\,T^n_{\left\langle 1 \right\rangle},\ldots,\kappa(s_n)\,T^n_{\left\langle \ell \right\rangle}\big)\big]\ -\ \E\big[g\big(U_{1},\ldots,U_{\ell}\big)\big]\big|\\[1ex]
& \loe \left|\E\left[g\left(\kappa(s_n)\,\widehat{T}^n_{\left\langle 1 \right\rangle},\ldots,\kappa(s_n)\,\widehat{T}^n_{\left\langle \ell \right\rangle}\right)\right]\ -\ \E\left[g\left(U_{1},\ldots,U_{\ell}\right)\right]\right|+\sum_{i=1}^\ell\E\left[\kappa(s_n)\widecheck{T}^n_{\left\langle i\right\rangle}\wedge 2\right]\\[1ex]
 &\loe \Big|\E\left[g\left(Y_{\rho_{c,n}}^{-\beta}\,V_{c,n}\right)\right]\ -\ \E\left[g\left(U_{1},\ldots,U_{\ell}\right)\right]\Big|\\[1ex]
&\qquad \quad +\ 
\sum_{i=1}^\ell\E\left[\left|\left(Y_{\rho_{c,n}}^{-\beta}\kappa(cs_n)-\kappa(s_n)\right)\widehat{T}^n_{\left\langle i\right\rangle}\right|\wedge 2\right] +\ell\,\E\left[\kappa(s_n)\widecheck{T}^n_{\left\langle 1\right\rangle}\wedge 2\right]\\[1ex]
&\loe \left|\E\left[g\left(Y_{\rho_{c,n}}^{-\beta}\,V_{c,n}\right)\right]\ -\ \E\left[g\left(U_{1},\ldots,U_{\ell}\right)\right]\right|\\[1ex]
& \qquad\quad + \ \ell\, \E\left[\left(\e\kappa(cs_n)Y_{\rho_{c,n}}^{-\beta}\widehat{T}_{\left\langle 1\right\rangle}\right)\wedge 2\right]+\ 2\ell\,\PP\left(\left|Y_{\rho_{c,n}}^{-\beta}\kappa(cs_n)-\kappa(s_n)\right|\geq\e\kappa(cs_n)Y_{\rho_{c,n}}^{-\beta}\right)\\[1ex]
& \qquad\quad \quad
+\ \ell\e + 2\ell\,\PP\left(\kappa(s_n)\,\widecheck{T}_{\left\langle 1\right\rangle}\geq \e \right)
\end{align*}
and, consequently,
\begin{align*}
&\limsup_{n\to\infty}\big|\E\big[g\big(\kappa(s_n)\,T^n_{\left\langle 1 \right\rangle},\ldots,\kappa(s_n)\,T^n_{\left\langle \ell \right\rangle}\big)\big]-\E\big[g\big(U_{1},\ldots,U_{\ell}\big)\big]\big|\\[1ex]
&\qquad\leq \ \limsup_{n\to\infty}\left|\E\left[g\left(Y_{\rho_{c,n}}^{-\beta}\,V_{c,n}\right)\right]-\E\left[g\left(U_{1},\ldots,U_{\ell}\right)\right]\right|\\[1ex]
&\qquad\qquad\quad +\ \ell\,\limsup_{n\to\infty} \left|\E\left[\left(\e\kappa(cs_n)Y_{\rho_{c,n}}^{-\beta}\widehat{T}_{\left\langle 1\right\rangle}\right)\wedge 2\right]-\E\left[\left(\e U_1\right)\wedge 2\right]\right|+\ell\,\E\left[\left(\e U_1\right)\wedge 2\right]\\[1ex]
&\qquad\qquad\quad\quad  +\ 2\ell\limsup_{n\to\infty}\PP\left(\left|1-\frac{\kappa(s_n)}{\kappa(cs_n)}Y_{\rho_{c,n}}^\beta\right|\geq \e\right)\\[1ex]
&\qquad\qquad\quad\quad\quad +\ \ell\e + 2\ell\,\limsup_{n\to\infty}\PP\left(\kappa(s_n)\,\widetilde{\rho}_{c,n}\geq \e\right).
\end{align*} 
We now use \eqref{key} for the first two right-hand terms and Proposition \ref{EV} (iii) for the first probability taking $\kappa(cs_n)/\kappa(s_n)\sim c^{\al-1}=c^{\al\beta}$ also into account. 
To the other probability, we apply Proposition~\ref{EV}~(i). 
Hence, passing to the limit as $c\to\infty$ yields
\begin{align*}
&\limsup_{n\to\infty}\big|\E\big[g\big(\kappa(s_n)\,T^n_{\left\langle 1 \right\rangle},\ldots,\kappa(s_n)\,T^n_{\left\langle \ell \right\rangle}\big)\big]-\E\big[g\big(U_{1},\ldots,U_{\ell}\big)\big]\big| \loe \ell\,\E\left[\left(\e U_1\right)\wedge 2\right] +\ell\e. 
\end{align*} 
\enlargethispage{2\baselineskip}

Finally, taking the limit $\e\to 0$ and using dominated convergence provides the claim.  
\end{proof}

\bigskip
 
\section{Proof of Theorem \texorpdfstring{\protect\ref{bs}}{1.6}} \label{sec_proof_bs}

Recall the notation of the reversed order statistics $Z_{\left\langle 1\right\rangle}\geq Z_{\left\langle 2\right\rangle}\geq \cdots$ of real-valued random variables as introduced in the previous section and the definition 
\[t_n\ :=\ \log\log{n}-\log\log\log{n}+\log\log\log{n}/\log\log{n}.\] 

In this section, we prove the following equivalent version of Theorem \ref{bs}:  

\begin{theorem} \label{bs v2}
For the Bolthausen-Sznitman coalescent, the following convergence holds: For $\ell\in\N$, 
\[\log{\log{n}}\left(T_{\left\langle 1\right\rangle}^{n}-t_n\,\ldots,T_{\left\langle \ell\right\rangle}^n-t_n\right)\ \overset{d}{\longrightarrow}\ \left(U_1-G,\ldots,U_\ell-G\right)\]
as $n\to\infty$, where $U_1>\cdots> U_\ell$ are the $\ell$ maximal points in decreasing order of a Poisson point process on $\R $ with intensity measure $e^{-x}\;dx$ and $G$ is an independent standard Gumbel distributed random variable. 
\end{theorem}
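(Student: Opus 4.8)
The plan is to mirror the proof of Theorem \ref{reg_var v2}, splitting each external branch at a suitable intermediate time and handling the residual lengths by Corollary \ref{cor}; the genuinely new feature is that the initial part no longer vanishes but produces the random Gumbel shift. Since the maximal lengths now diverge in probability, a stopping time of the form $\widetilde\rho_{c,n}$ is no longer controllable, so I would split at the \emph{deterministic} time $t_{c,n}=t_n-\log c/\log\log n$, writing $\widecheck T^n_i=T^n_i\wedge t_{c,n}$ and $\widehat T^n_i=T^n_i-\widecheck T^n_i$ as in the introduction. Let $Y_{c,n}$ denote the number of external branches present at $t_{c,n}$ (equivalently, the number of singleton blocks of $\Pi_n(t_{c,n})$) and $N_n(t_{c,n})$ the number of blocks. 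Every branch surviving up to $t_{c,n}$ has $\widecheck T^n_i=t_{c,n}$, whence
\[\log\log n\,(T^n_{\langle i\rangle}-t_n)\eq -\log c+\log\log n\,\widehat T^n_{\langle i\rangle},\]
whereas every branch coalescing before $t_{c,n}$ satisfies $\log\log n\,(T^n_i-t_n)<-\log c$. Thus, as $c\to\infty$, only survivors can contribute to the top $\ell$ points, which disposes of the analogue of the ``check part'' negligibility.

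For the residual lengths I would condition on $\mathcal F_{t_{c,n}}$ and use the strong Markov property: given $N_n(t_{c,n})$ blocks, $Y_{c,n}$ of which are singletons, the $\widehat T^n_i$ of the survivors are distributed as external branch lengths in an $N_n(t_{c,n})$-coalescent, because the $\Lambda$-dynamics depend only on the number of blocks, not their sizes. Since $\mu$ is regularly varying with exponent $\al=1$ and $\mu(x)/x=\log x$, Corollary \ref{cor} applies to this residual coalescent; because $\log N_n(t_{c,n})/\log\log n\to 1$ (see below), the normalisation $\log N_n(t_{c,n})$ may be replaced by $\log\log n$, so that the scaled residual lengths $\log\log n\,\widehat T^n_i$ are asymptotically i.i.d.\ standard exponential. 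An extreme-value argument for exponentials — the exact counterpart of Lemma \ref{lem:key}, using that the point process of $\{\text{Exp}_i-\log m\}$ converges to a Poisson point process with intensity $e^{-x}dx$ — then yields, for the top order statistics,
\[\log\log n\,\widehat T^n_{\langle i\rangle}\eq \log Y_{c,n}+U_i+\oo_P(1),\qquad 1\le i\le\ell,\]
where $U_1>\cdots>U_\ell$ are the maximal points of the Poisson point process of Theorem \ref{bs v2}, built solely from the post-$t_{c,n}$ evolution.

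It remains to identify the behaviour of $(N_n(t_{c,n}),Y_{c,n})$. Using the heuristic $\log N_n(t)\approx e^{-t}\log n$ — made rigorous by Möhle's description of the block counting process through the Mittag-Leffler process \cite{Moe15} — together with the precise choice of $t_n$, one computes $e^{-t_{c,n}}\log n\sim \log\log n-\log\log\log n+\log c$, so that $\log N_n(t_{c,n})/\log\log n\to 1$ and, crucially, the expected number of survivors satisfies $\E[Y_{c,n}]\to c$. The substance of Möhle's result is that $Y_{c,n}$ does \emph{not} concentrate: in the iterated limit ($n\to\infty$, then $c\to\infty$) one has $Y_{c,n}/c\stackrel{d}{\longrightarrow}E$ for a standard exponential $E$, the randomness reflecting the common early history shared by all branches. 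Combining this with the displayed residual expansion and $\log\log n\,(t_{c,n}-t_n)=-\log c$ gives
\[\log\log n\,(T^n_{\langle i\rangle}-t_n)\eq U_i+\log\big(Y_{c,n}/c\big)+\oo_P(1)\ \longrightarrow\ U_i-G,\]
with $G:=-\log E$ standard Gumbel. Since, by the strong Markov property, the centred residual points $\{U_i\}$ are asymptotically independent of $\sigma(N_n(t_{c,n}),Y_{c,n})$ and hence of $E$, the variables $U_i$ and $G$ are independent, as required.

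I expect the extraction of the exponential limit $Y_{c,n}/c\to E$ together with its asymptotic independence from the residual point process to be the main obstacle: this is exactly where the picture departs from the case $\al>1$, in which the analogue of $Y_{c,n}$ concentrates (Proposition \ref{EV}(iii)), and it is the step that forces the use of Möhle's fine description \cite{Moe15} rather than the laws of large numbers of Section \ref{SLOLN}, which are confined to the small-time regime. The remaining difficulty — the careful interchange of the limits $n\to\infty$ and $c\to\infty$ and the control of the non-surviving branches — follows the pattern of the proof of Theorem \ref{reg_var v2} and should pose no new conceptual problems.
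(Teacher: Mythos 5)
Your proposal follows essentially the same route as the paper's proof: the same split at the deterministic time $t_{c,n}$, the same treatment of the residual lengths via the strong Markov property, Corollary \ref{cor} and extreme-value theory for exponentials (the paper's Lemma \ref{lem:key2}), the same identification of the Gumbel shift through $Y_{c,n}/c\stackrel{d}{\longrightarrow}E$ (the paper's Lemmas \ref{N} and \ref{M}, where your $Y_{c,n}$ is the paper's $M_n(t_{c,n})$), and the same decoupling argument giving independence of the limit points from the shift. The one step you defer---passing from M\"ohle's result on $N_n(t_{c,n})$ to the survivor count---is carried out in the paper via the conditional factorial-moment formula of Lemma \ref{Lambda} together with the martingale estimate $\sum_{j=0}^{\rho_{c,n}-1}X_j^{-1}=t_{c,n}+\oo_P(1)$ of Lemma \ref{sum_bs}, exactly the substitute for Proposition \ref{ErgLem2} that you anticipated would be needed in the large-time regime.
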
 

\medskip
  
Recall, for $c>1$, the notion
\[t_{c,n}\ :=\ t_n-\frac{\log{c}}{\log\log{n}}.\]  

\medskip 

\begin{lem}\label{N}
Let $E$ be a standard exponential random variable. Then, as $n\to\infty$, we have for $c>1$,  
\[e^{-t_{c,n}}N_n(t_{c,n})\ \overset{d}{\longrightarrow}\ cE.\]
\end{lem}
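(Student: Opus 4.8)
Since $x\mapsto\log x$ is a homeomorphism of $(0,\infty)$ and the proposed limit $cE$ is a.s.\ positive, the asserted convergence $e^{-t_{c,n}}N_n(t_{c,n})\overset{d}{\to}cE$ is equivalent, by the continuous mapping theorem applied to $\log$ and to $\exp$, to
\[\log N_n(t_{c,n})-t_{c,n}\ \overset{d}{\longrightarrow}\ \log c+\log E.\]
The plan is to split the left-hand side along the deterministic profile $e^{-t}\log n$ of $\log N_n(t)$, which is the law-of-large-numbers curve coming from $\mu(b)\sim b\log b$ (equivalently $\tfrac{d}{dt}\log N_n\approx-\log N_n$). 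Writing
\[\log N_n(t_{c,n})-t_{c,n}\eq\big(\log N_n(t_{c,n})-e^{-t_{c,n}}\log n\big)\ +\ \big(e^{-t_{c,n}}\log n-t_{c,n}\big),\]
I would treat the deterministic bracket and the random fluctuation $D_n:=\log N_n(t_{c,n})-e^{-t_{c,n}}\log n$ separately. Note that at time $t_{c,n}$ one has $N_n(t_{c,n})\to\infty$ in probability (of order $\log n/\log\log n$), so we are safely before the absorption time $\widetilde{\tau}_n$ and the convention $N_n(t)=1$ for $t>\widetilde{\tau}_n$ plays no role.

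Second, I would carry out the deterministic bookkeeping, which is where the three carefully chosen terms in $t_n$ earn their keep. Writing $a:=\log\log n$ and $b:=\log\log\log n=\log a$, a short Taylor expansion of $e^{\pm(\cdot)/a}$ gives $e^{-t_{c,n}}\log n=a\,e^{-b/a}e^{(\log c)/a}=a-b+\log c+\oo(1)$ and $t_{c,n}=a-b+b/a-(\log c)/a=a-b+\oo(1)$, so that the two leading orders cancel and $e^{-t_{c,n}}\log n-t_{c,n}=\log c+\oo(1)$. Equivalently, in multiplicative form, $n^{e^{-t_{c,n}}}\sim c\,\log n/\log\log n$ and $e^{t_{c,n}}\sim\log n/\log\log n$, so the deterministic factor relating the two normalisations converges to $c$.

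Third, the random fluctuation $D_n$. Here I would invoke Möhle's scaling limit \cite{Moe15} for the block counting process of the Bolthausen--Sznitman coalescent, in the form valid in the large time regime. For fixed $t$ this identifies the limit law of $N_n(t)/n^{e^{-t}}$ (up to the normalising constant $\Gamma(1+e^{-t})$, which tends to $1$) as Mittag--Leffler of order $e^{-t}$; since $t_{c,n}\to\infty$ the order tends to $0$, and the Mittag--Leffler law of order tending to $0$ converges to the standard exponential, its $k$-th moment $k!/\Gamma(1+e^{-t}k)$ tending to $k!$. Evaluating at $t=t_{c,n}$ thus yields $N_n(t_{c,n})/n^{e^{-t_{c,n}}}\to E$, i.e.\ $D_n\to\log E$ in distribution. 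Combining the three steps gives $\log N_n(t_{c,n})-t_{c,n}\to\log c+\log E=\log(cE)$, and exponentiating produces the claim.

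The crux is the third step: one must justify that Möhle's convergence may be read off along the \emph{diverging} deterministic times $t_{c,n}\to\infty$ rather than at a fixed $t$. This requires the version of his result that remains valid in the large time regime (or, absent a ready-made statement, a supplementary tightness/uniform-control argument showing that the fluctuation $D_n(t)$ stabilises as $t$ enters this window), together with the precise matching of normalising constants so that the order-$e^{-t_{c,n}}$ Mittag--Leffler limit degenerates to exactly the standard exponential $E$ under the clean normalisation $n^{e^{-t}}$. The delicate lower-order terms retained in the second step are precisely what make this degeneration produce the factor $c$ and nothing more.
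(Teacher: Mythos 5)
Your plan has the right skeleton, and your deterministic bookkeeping is exactly the computation the paper performs: with $a=\log\log n$, $b=\log\log\log n$ one gets $e^{-t_{c,n}}\log n = a-b+\log c+\oo(1) = t_{c,n}+\log c+\oo(1)$, equivalently $n^{e^{-t_{c,n}}}=ce^{t_{c,n}}(1+\oo(1))$, which is where the factor $c$ comes from. But the third step, which you yourself call the crux, is a genuine gap and not a removable formality. From ``for each \emph{fixed} $t$, $n^{-e^{-t}}N_n(t)$ converges to a Mittag--Leffler law of order $e^{-t}$'' together with ``the Mittag--Leffler law of order $\beta$ tends to the standard exponential as $\beta\to 0$'' one cannot conclude anything about the diagonal sequence $n^{-e^{-t_{c,n}}}N_n(t_{c,n})$ with $t_{c,n}\to\infty$: a double limit may not be taken along a diagonal without uniformity in $t$, and Möhle's fixed-time (or compact-time-interval) scaling limit does not provide it. Your parenthetical fallback --- ``a supplementary tightness/uniform-control argument'' --- is precisely the missing proof, and nothing in your proposal indicates how to produce it.

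The paper closes exactly this hole by never invoking the distributional scaling limit at all. It uses Möhle's Lemma 3.1, which is an \emph{exact, finite-$n$} identity for ascending factorial moments,
\[\E\left[N_n(t)^{(r)}\right] \eq \frac{\Gamma(r+1)}{\Gamma(1+re^{-t})}\,\frac{\Gamma(n+re^{-t})}{\Gamma(n)},\]
valid for every $n$, every $t\geq 0$ and every $r\in\N$. Stirling's approximation gives $\Gamma(n+re^{-t})/\Gamma(n)=n^{re^{-t}}(1+\oo(1))$ \emph{uniformly in} $t\geq 0$, so one may legitimately substitute $t=t_{c,n}$; combined with your deterministic computation this yields $e^{-rt_{c,n}}\E\left[N_n(t_{c,n})^{(r)}\right]\to c^r r!$, and since $\Gamma(1+re^{-t_{c,n}})\to 1$ the Mittag--Leffler normalising constant degenerates automatically. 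After comparing ascending factorials with ordinary powers, the method of moments (the exponential distribution being determined by its moments $r!$) gives the claim. In short: the uniform control you hope exists is obtained by replacing the weak limit at fixed times with an exact moment formula, and your proposal is incomplete without this (or an equivalent) ingredient.
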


\begin{proof}
We first consider $N_n(t)^{(r)}:=N_n(t)\left(N_n(t)+1\right)\cdots\left(N_n(t)+r-1\right)$ for $r\in\N $. 
For these ascending factorials, Lemma 3.1 of \cite{Moe15} provides
\[\E\left[N_n(t)^{(r)}\right] \eq \frac{\Gamma\left(r+1\right)}{\Gamma\left(1+re^{-t}\right)}\frac{\Gamma\left(n+re^{-t}\right)}{\Gamma\left(n\right)}.\] 
The Sterling approximation with remainder term yields uniformly in $t\geq 0$,
\[\frac{\Gamma\left(n+re^{-t}\right)}{\Gamma\left(n\right)} \eq n^{re^{-t}}\left(1+\oo\left(1\right)\right)\]
and, consequently,
\[\E\left[N_n(t)^{(r)}\right] \eq \frac{\Gamma\left(r+1\right)}{\Gamma\left(1+re^{-t}\right)}\ n^{re^{-t}}\left(1+\oo\left(1\right)\right)\]
uniformly in $t\geq 0$ as $n\rightarrow\infty$.
Inserting $t_{c,n}$ in this equation entails 
\enlargethispage{\baselineskip}
\begin{align*}
n^{-re^{-t_{c,n}}}\E\left[N_n(t_{c,n})^{(r)}\right]\ \rightarrow\  r!
\end{align*}

as $n\rightarrow\infty$.

 \medskip

Now observe
\begin{align*}
e^{-t_{c,n}}\log{n}&\eq \exp{\left(-\frac{\log\log\log{n}}{\log\log{n}}+\frac{\log{c}}{\log\log{n}}\right)}\log\log{n}\\[1ex]
&\eq\log{\log{n}}-\log{\log{\log{n}}}+\log{c} +\oo\left(1\right)\\[1ex]
&\eq t_{c,n}+\log{c}+\oo(1).
\end{align*}
Equivalently,
\begin{align*}
n^{e^{-t_{c,n}}}\eq ce^{t_{c,n}}\left(1+\oo\left(1\right)\right)
\end{align*}
and, therefore,
\begin{align}\label{asc_fact}
e^{-rt_{c,n}}\E\left[N_n(t_{c,n})^{(r)}\right]\ \rightarrow\  c^rr!
\end{align}
as $n\to\infty$. \medskip

Furthermore, because of  
\[N_n(t)^{r}\loe N_n(t)^{(r)}\loe N_n(t)^r+2^rr^rN_n(t)^{r-1}\loe N_n(t)^r+2^rr^rN_n(t)^{(r-1)},\]
we have
\[N_n(t)^{(r)}-2^rr^rN_n(t)^{(r-1)}\loe N_n(t)^r\loe N_n(t)^{(r)}.\]
Thus, \eqref{asc_fact} transfers to 
\[e^{-rt_{c,n}}\E\left[N_n(t_{c,n})^{r}\right]\ \longrightarrow \ c^rr!\] 
as $n\to\infty$ and our claim follows by method of moments. 
\end{proof}

\bigskip

The following lemma provides the asymptotic behavior of the joint probability distribution of the lengths of the longest external branches starting at time $t_{c,n}$. Let 
\[M_n(t) \ := \ \#\left\{i\geq 1 :\;\{i\}\in\Pi_n(t)\right\},\qquad t\geq 0,\]  
which is the number of external branches at time $t$. Also recall 
\[\widehat{T}^n_{\left\langle i\right\rangle}:=(T_{\left\langle i\right\rangle}^n-t_{c,n})^+.\] 

 \begin{samepage}\enlargethispage{2\baselineskip}
\begin{lem}\label{lem:key2} 
For $\ell,y\in\N$, there exist random variables $U_{1,y}\geq\cdots\geq U_{\ell,y}$ such that the following convergence results hold:
\begin{enumerate}
\item For any bounded continuous function $g:\R^\ell\to \R$ and for fixed natural numbers $\ell\leq y$, as $n\to\infty$,
\begin{align*}
&\E\left[g\left(\log{\log{(n)}}\,\big(\widehat{T}^{n}_{\left\langle 1 \right\rangle},\ldots,\widehat{T}^{n}_{\left\langle \ell \right\rangle}\big)\right)\,\Big|\,N_n(t_{c,n}),M_n(t_{c,n})=y\right] \ \longrightarrow \ \E\left[g\left(U_{1,y},\ldots,U_{\ell,y}\right)\right]
\end{align*}
in probability. 
\item For fixed $\ell$, as $y\to\infty$,
\[ \left(U_{1,y}-\log{y}
,\ldots,U_{\ell,y}-\log{y}
\right) \  \stackrel{d}{\longrightarrow} \ \left(U_{1},\ldots,U_{\ell}\right),\]
where $U_{1}>\cdots> U_{\ell}$ are the points of the Poisson point process of Theorem \ref{bs v2}. 
\end{enumerate}
\end{lem}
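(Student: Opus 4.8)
The plan is to follow the architecture of the proof of Lemma~\ref{lem:key}, now in the regime $\al=1$: the role played there by Corollary~\ref{cor} for $\al>1$ is taken over by its exponential instance, and the role of Proposition~\ref{EV} is taken over by Lemma~\ref{N}. The two parts separate cleanly into a Markov/scaling argument (part~(i)) and a density computation (part~(ii)).

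For part~(i) I would first invoke the Markov property of the coalescent at the deterministic time $t_{c,n}$: conditionally on $\Pi_n(t_{c,n})$, the subsequent evolution is again a Bolthausen--Sznitman coalescent, started from the partition $\Pi_n(t_{c,n})$. On the event $\{N_n(t_{c,n})=m,\, M_n(t_{c,n})=y\}$ this partition has $m$ blocks, exactly $y$ of which are singletons, and by exchangeability the residual lengths $\widehat T^n_i$ of these $y$ singletons are jointly distributed as $y$ of the $m$ external branch lengths of a Bolthausen--Sznitman $m$-coalescent. Since $y\ge\ell$ and every branch that has already coalesced by time $t_{c,n}$ contributes $\widehat T^n_i=0$, the top-$\ell$ order statistics $\widehat T^n_{\langle 1\rangle},\ldots,\widehat T^n_{\langle \ell\rangle}$ coincide with the top-$\ell$ of these $y$ residual lengths. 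Applying Corollary~\ref{cor} with $\al=1$ then yields, as $m\to\infty$, that $\tfrac{\mu(m)}{m}\,(\widehat T^n_{\langle 1\rangle},\ldots,\widehat T^n_{\langle \ell\rangle})$ converges in distribution to the reversed order statistics $U_{1,y}\ge\cdots\ge U_{\ell,y}$ of $y$ i.i.d.\ standard exponentials, whose joint density is that of \eqref{jointdens} with $f(u)=e^{-u}$ and $F(u)=1-e^{-u}$; this defines the $U_{i,y}$. It then remains to replace the natural scaling $\mu(m)/m$ by the deterministic one $\log\log n$. Here I would use Lemma~\ref{N}: since $N_n(t_{c,n})$ is of order $e^{t_{c,n}}$ and $t_{c,n}\sim\log\log n$, one gets $\log N_n(t_{c,n})=\log\log n\,(1+\oo_P(1))$ and hence, using $\mu(m)/m\sim\log m$ for the Bolthausen--Sznitman coalescent, $\tfrac{\mu(N_n(t_{c,n}))/N_n(t_{c,n})}{\log\log n}\to 1$ in probability. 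As in the proof of Lemma~\ref{lem:key}~(i), Skorohod's representation theorem lets me realize a sequence $m_n'$ with the same law as $N_n(t_{c,n})$ and $\log m_n'/\log\log n\to 1$ almost surely; along this coupling the conditional convergence above, combined with Slutsky's lemma, upgrades to the asserted convergence in probability with scaling $\log\log n$.

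Part~(ii) is the classical extreme-value computation for the exponential law, carried out exactly as in Lemma~\ref{lem:key}~(ii). Starting from \eqref{jointdens} with $f(u)=e^{-u}$ and $F(u)=1-e^{-u}$, I would substitute $u_i=\log y+v_i$ and send $y\to\infty$, using $f(\log y+v_i)=e^{-v_i}/y$, $\ell!\binom{y}{\ell}y^{-\ell}\to 1$, and $F(\log y+v_\ell)^{y-\ell}=(1-e^{-v_\ell}/y)^{y-\ell}\to e^{-e^{-v_\ell}}$. The density of $(U_{1,y}-\log y,\ldots,U_{\ell,y}-\log y)$ then converges to $e^{-e^{-v_\ell}}\prod_{i=1}^{\ell}e^{-v_i}$ on $\{v_1>\cdots>v_\ell\}$, which is precisely the joint density of the $\ell$ largest points of a Poisson point process on $\R$ with intensity $e^{-x}\,dx$, matching Theorem~\ref{bs v2}.

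The main obstacle is the scaling reconciliation in part~(i). Because $N_n(t_{c,n})$ grows only logarithmically in $n$, the sample size fed into Corollary~\ref{cor} is small and random, and the approximation $\mu(m)/m\sim\log m$ must be applied with its $(1+\oo(1))$ error absorbed into the scaling; turning the purely distributional statement of Corollary~\ref{cor} (valid as the sample size tends to infinity) into an in-probability statement for the random, slowly diverging sample size $N_n(t_{c,n})$ is the delicate step, and it is the Skorohod coupling together with the control on $\log N_n(t_{c,n})$ from Lemma~\ref{N} that makes it go through.
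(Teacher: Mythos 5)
Your proposal is correct and takes essentially the same route as the paper's proof: part (i) via the strong Markov property at $t_{c,n}$, exchangeability, Corollary \ref{cor} with $\alpha=1$ identifying $U_{1,y}\geq\cdots\geq U_{\ell,y}$ as the top order statistics of $y$ i.i.d.\ standard exponentials, and Lemma \ref{N} plus Skorohod's representation theorem to replace the random scaling $\mu(N_n(t_{c,n}))/N_n(t_{c,n})\sim\log N_n(t_{c,n})$ by the deterministic $\log\log n$; part (ii) via the same shift-by-$\log y$ limit computation of the joint density. Your write-up is in fact somewhat more explicit than the paper's (which compresses the Markov/exchangeability step into a reference to Lemma \ref{lem:key} (i)), but the ingredients and their roles coincide exactly.
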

\end{samepage}

\begin{proof}
(i) We proceed in the same vein as in the proof of Lemma \ref{lem:key} (i). The strong Markov property, Corollary \ref{cor} (see also formula \eqref{ex} in the first example) and Lemma \ref{N} yield that

\[\E\left[g\left(z\,\big(\widehat{T}^{n}_{\left\langle 1 \right\rangle},\ldots,\widehat{T}^{n}_{\left\langle \ell \right\rangle}\big)\right)\,\Big|\,N_n(t_{c,n})=x,M_n(t_{c,n})=y\right] \ \longrightarrow \ \E\left[g\left(U_{1,y},\ldots,U_{\ell,y}\right)\right]\]
as $x\to\infty$ and $z/\log{x}\to 1$, where 
$(U_{1,y},\ldots,U_{\ell,y})$ has the density  
\begin{align}\label{dens_bs}
\ell!\binom{y}{\ell}\left(1-e^{-u_\ell}\right)^{y-\ell}\,\prod_{i=1}^{\ell}e^{-u_i} du_1\cdots du_\ell
\end{align}
for $u_1\geq\cdots\geq u_\ell$.  
Moreover, from Lemma \ref{N}, we obtain
\begin{align*}
\log{\left(N_n(t_{c,n})\right)} \eq t_{c,n} +\OO_P(1) \eq \log{\log{n}}+\oo_P\left(\log{\log{n}}\right)
\end{align*}
as $n\rightarrow\infty$. Thus, replacing $x$ and $z$ above by $N_n(t_{c,n})$ and $\log\log{n}$, respectively,  and invoking Skorohod's representation theorem once more, our claim follows. \bigskip

(ii) Shifting the distribution from \eqref{dens_bs} by $\log{y}$, we arrive at the densities
\[\ell!\binom{y}{\ell}\left(1-\frac{e^{-u_\ell}}{y}\right)^{y-\ell}\,y^{-\ell}\prod_{i=1}^{\ell}e^{-u_i} du_1\cdots du_\ell\]
and their limit
\[e^{-e^{-u_\ell}}\prod_{i=1}^{\ell}e^{-u_i}du_i\]
as $y\to\infty$, which is the joint density of $U_1,\ldots, U_\ell$. This finishes the proof. 
\end{proof}

\bigskip

Next, we introduce the notion  
\[\rho_{c,n} \ := \ \min{\bigg\{k\geq 1:\ \sum_{j=0}^{k-1} W_j>t_{c,n}\bigg\}}\wedge\tau_n.\]

\medskip

It is important to note that in the case of the Bolthausen-Sznitman coalescent 
Proposition \ref{ErgLem2} is no longer helpful and   
we may not simply apply \eqref{Prop.3}. 
As a substitute, we shall use the following lemma.

\begin{lem}\label{sum_bs}
As $n\to\infty$,
\[\sum_{j=0}^{\rho_{c,n}-1}\frac{1}{X_j} \eq t_{c,n}+\oo_P(1).\]
\end{lem}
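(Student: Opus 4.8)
The plan is to sidestep Proposition~\ref{ErgLem2}, which is unavailable in this large--time regime, and instead to exploit the special feature of the Bolthausen--Sznitman coalescent that the total rate at which the block counting process jumps is exactly $\lambda(b)=b-1$. Indeed, for the uniform measure $\Lambda$ one has $\binom{b}{k}\lambda_{b,k}=b/(k(k-1))$, so that $\lambda(b)=\sum_{k=2}^{b}\binom{b}{k}\lambda_{b,k}=b\sum_{k=2}^{b}\big(\tfrac1{k-1}-\tfrac1k\big)=b-1$. Consequently, conditionally on the embedded chain $X=(X_j)_{j\ge0}$, the holding times $W_j$ are independent and exponentially distributed with means $\E[W_j\mid X]=1/\lambda(X_j)=1/(X_j-1)$, and $1/(X_j-1)$ agrees with $1/X_j$ up to a negligible correction. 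The idea is therefore to read off $\sum_{j=0}^{\rho_{c,n}-1}1/X_j$ from the elapsed real time $\sum_{j=0}^{\rho_{c,n}-1}W_j$, which Lemma~\ref{N} pins down.

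First I would control the overshoot. Since $\rho_{c,n}$ is the index of the first merger occurring strictly after $t_{c,n}$, the sum $\sum_{j=0}^{\rho_{c,n}-1}W_j$ equals $t_{c,n}$ plus the residual holding time of $N_n$ at $t_{c,n}$. By the Markov property this residual time is exponentially distributed with rate $\lambda(N_n(t_{c,n}))=N_n(t_{c,n})-1$. Lemma~\ref{N} gives $e^{-t_{c,n}}N_n(t_{c,n})\to cE$ in distribution, and since $e^{t_{c,n}}\to\infty$ this forces $N_n(t_{c,n})\to\infty$ in probability; in particular absorption has not yet occurred, so the truncation at $\tau_n$ is immaterial and the next merger after $t_{c,n}$ exists with high probability. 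Hence the overshoot is $\oo_P(1)$ and $\sum_{j=0}^{\rho_{c,n}-1}W_j=t_{c,n}+\oo_P(1)$.

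The main step is a conditional law of large numbers for the holding times. Conditionally on $X$, the process $M_k:=\sum_{j=0}^{k-1}\big(W_j-\tfrac1{X_j-1}\big)$ is a martingale with predictable quadratic variation $\langle M\rangle_k=\sum_{j=0}^{k-1}(X_j-1)^{-2}$, and $\rho_{c,n}$ is a stopping time for its filtration. Because $X$ is strictly decreasing, for $j\le\rho_{c,n}-1$ the $X_j$ are distinct integers at least $X_{\rho_{c,n}-1}=N_n(t_{c,n})$, whence $\langle M\rangle_{\rho_{c,n}}\le\sum_{m\ge N_n(t_{c,n})}(m-1)^{-2}=\OO_P(1/N_n(t_{c,n}))=\oo_P(1)$. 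To turn this into $M_{\rho_{c,n}}=\oo_P(1)$ I would work on the event $\{N_n(t_{c,n})>r\}$ for deterministic $r\to\infty$: there one has $\rho_{c,n}\le\rho_r:=\min\{k:X_k\le r\}$, the stopped variation $\langle M\rangle_{\rho_r}\le 2/r$ is deterministically small, so $\E[M_{\rho_r}^2]=\E[\langle M\rangle_{\rho_r}]\le 2/r$ by optional stopping, and Doob's $L^2$ maximal inequality bounds $\PP(\sup_{k\le\rho_r}|M_k|>\e)\le 2/(r\e^2)$. Combining with $\PP(N_n(t_{c,n})\le r)\to0$ and then letting $r\to\infty$ yields $M_{\rho_{c,n}}\to0$ in probability, so that $\sum_{j=0}^{\rho_{c,n}-1}\tfrac1{X_j-1}=\sum_{j=0}^{\rho_{c,n}-1}W_j+\oo_P(1)$.

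Finally I would pass from $\tfrac1{X_j-1}$ to $\tfrac1{X_j}$: the difference $\sum_{j=0}^{\rho_{c,n}-1}\tfrac1{X_j(X_j-1)}$ is again bounded by $\sum_{m\ge N_n(t_{c,n})}\tfrac1{m(m-1)}=\OO_P(1/N_n(t_{c,n}))=\oo_P(1)$. Chaining the three estimates gives $\sum_{j=0}^{\rho_{c,n}-1}\tfrac1{X_j}=t_{c,n}+\oo_P(1)$, as claimed. The delicate point is the martingale step: since $\rho_{c,n}$ is defined through the random holding times, the quadratic variation bound cannot be invoked pathwise but must be enforced on the high-probability event $\{N_n(t_{c,n})>r\}$ before the maximal inequality applies, and it is precisely the input $N_n(t_{c,n})\to\infty$ from Lemma~\ref{N} that makes all three error terms vanish.
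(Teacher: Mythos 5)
Your proof is correct and follows essentially the same route as the paper's: both rest on the observation that the Bolthausen--Sznitman holding times are conditionally exponential with rate $X_j-1$, form the martingale $\sum_j\bigl(W_j-\tfrac{1}{X_j-1}\bigr)$, bound its quadratic variation by the telescoping sum $\OO\bigl(1/N_n(t_{c,n})\bigr)$, invoke Lemma \ref{N} to make this vanish, and dispose of the overshoot beyond $t_{c,n}$ by memorylessness. The only difference is a technical one: where you truncate at a deterministic level $r$ on the event $\{N_n(t_{c,n})>r\}$ and apply Doob's maximal inequality, the paper applies optional sampling directly at the bounded stopping time $\rho_{c,n}$ and concludes via dominated convergence from $\E\bigl[Z^2_{\rho_{c,n}}\bigr]\loe\E\bigl[4/N_n(t_{c,n})\bigr]\to 0$.
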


\begin{proof} 
Let $\mathcal{F}_k:=\sigma\left(X,W_0,\ldots,W_{k-1}\right)$ and 
\[Z_k\ :=\ \sum_{j=0}^{k\wedge \tau_n-1}\left(W_j-\frac{1}{X_j-1}\right), \qquad k\geq 0.\]
In particular, we have $Z_0=0$. 
Given $\mathcal{F}_j$ and $X_j=b$ with $b\geq 2$, the waiting time $W_j$ in the Bolthausen-Sznitman coalescent is exponential with rate parameter $b-1$ (see (47) in \cite{Pit99}).  
Thus, $(Z_k)_{k\in\N}$ is a martingale with respect to the filtration $(\mathcal{F}_k)_{k\in\N}$ with (predictable) quadratic variation 
\[\langle Z\rangle_{k} \ :=\ \sum_{j=0}^{k\wedge \tau_n-1}\E\left[(Z_{j+1}-Z_j)^2\big|\mathcal{F}_j\right] \eq \sum_{j=0}^{k\wedge\tau_n-1} \frac{1}{(X_j-1)^2} \qquad a.s.\] 
Applying Doob's optional sampling theorem to the martingale $Z_{k}^2-\langle Z\rangle_{k}$ yields
\begin{align} \label{3/X}
\E\left[Z^2_{\rho_{c,n}}\right] \eq \E\left[\left\langle Z\right\rangle_{\rho_{c,n}}\right]\eq \E\left[\sum_{j=0}^{\rho_{c,n}-1}\frac{1}{(X_j-1)^2}\right]\loe 
\E\left[\sum_{k=X_{\rho_{c,n}-1}}^\infty \frac{1}{(k-1)^{2}}\right] 
\end{align} 
and, therefore, because of $X_{\rho_{c,n}-1}=N_n(t_{c,n})$ a.s.,
\[\E\left[Z^2_{\rho_{c,n}}\right]\loe \E\left[\frac{4}{N_n(t_{c,n})}\right].\]
By Lemma \ref{N} and dominated convergence, the right-hand term converges to $0$ as $n\to\infty$ implying  
\[\sum_{j=0}^{\rho_{c,n}-1}\left(W_j-\frac{1}{X_j}\right)\eq Z_{\rho_{c,n}} +\OO_P\left(\frac{4}{X_{\rho_{c,n}-1}}\right)\eq\oo_P(1)\]
as $n\to\infty$. 
\enlargethispage{2\baselineskip}
Finally, the quantity 
\(\sum_{j=0}^{\rho_{c,n}-1}W_j-t_{c,n}\) 
is the residual time the process $N_n$ spends in the state $N_n(t_{c,n})$. Due to the property that exponential times lack memory, the residual time is exponential with parameter $N_n(t_{c,n})$. Thus, in view of Lemma \ref{N}, the residual time converges to $0$ in probability. This finishes the proof.    
\end{proof}

\bigskip

\begin{lem}\label{M} For the number of external branches at time $t_{c,n}$, we have the following results:
\begin{enumerate}
\item For $c>1$,  
\[\E\left[M_n(t_{c,n})\left.\right|N_n\right]\ \overset{d}{\longrightarrow}\ c\,E\]
as $n\to\infty$, where $E$ denotes a standard exponential random variable.
\item For $\e>0$, as $c\to\infty,$
\[\limsup_{n\to\infty} \PP\left(\left|M_n(t_{c,n})\ -\ \E\left[M_n(t_{c,n})\left|N_n\right.\right]\right|>c^{1/2+\e}\right)\ \rightarrow \ 0 \]
as well as 
\[\limsup_{n\to\infty}\PP\left(M_n(t_{c,n})> c^{1+\e}\right) \ \rightarrow \ 0 \qquad \text{ and } \qquad \limsup_{n\to\infty}\PP\left(M_n(t_{c,n})<c^{1-\e}\right)\ \rightarrow \ 0. \] 
\end{enumerate}
\end{lem}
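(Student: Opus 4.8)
The key observation is that $M_n(t_{c,n})=Y_{\rho_{c,n}-1}$, since at time $t_{c,n}$ precisely $\rho_{c,n}-1$ mergers have taken place and $X_{\rho_{c,n}-1}=N_n(t_{c,n})$ (as already noted in the proof of Lemma~\ref{sum_bs}). This turns part (i) into a conditional-expectation computation: applying Lemma~\ref{Lambda}~(i) with $r=1$ and $\rho=\rho_{c,n}-1$ gives
\[\E\big[M_n(t_{c,n})\mid N_n\big]\eq (N_n(t_{c,n})-1)\,\frac{n}{n-1}\,\prod_{j=0}^{\rho_{c,n}-2}\Big(1-\frac1{X_j}\Big)\quad a.s.\]
I would write the product as $\exp\!\big(-\sum_{j=0}^{\rho_{c,n}-2}X_j^{-1}+\OO(X_{\rho_{c,n}-2}^{-1})\big)$ (using $\sum_j X_j^{-2}\loe 2/X_{\rho_{c,n}-2}$) and feed in Lemma~\ref{sum_bs}, noting that $\sum_{j=0}^{\rho_{c,n}-2}X_j^{-1}=t_{c,n}-1/N_n(t_{c,n})+\oo_P(1)=t_{c,n}+\oo_P(1)$ because Lemma~\ref{N} forces $N_n(t_{c,n})\to\infty$ in probability. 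Hence the product equals $e^{-t_{c,n}}(1+\oo_P(1))$ and $\E[M_n(t_{c,n})\mid N_n]=N_n(t_{c,n})e^{-t_{c,n}}(1+\oo_P(1))$. Lemma~\ref{N} then yields $e^{-t_{c,n}}N_n(t_{c,n})\overset{d}{\longrightarrow}cE$, so part (i) follows by Slutsky's theorem.

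For the concentration estimate in part (ii), the natural device is the conditional-variance bound $\V(M_n(t_{c,n})\mid N_n)\loe\E[M_n(t_{c,n})\mid N_n]$ from Lemma~\ref{Lambda}~(ii). Conditional Chebyshev gives
\[\PP\big(|M_n(t_{c,n})-\E[M_n(t_{c,n})\mid N_n]|>c^{1/2+\e}\,\big|\,N_n\big)\loe \frac{\E[M_n(t_{c,n})\mid N_n]}{c^{1+2\e}}\wedge 1\quad a.s.\]
Taking expectations, the right-hand side is a bounded continuous functional of the conditional mean, which converges in distribution to $cE$ by part (i); hence its expectation converges to $\E[(c^{-2\e}E)\wedge1]$, and this tends to $0$ as $c\to\infty$ by dominated convergence. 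This is the exact analogue of the second-moment step in the proof of Proposition~\ref{EV}~(iii).

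The two one-sided bounds I would deduce from part (i) and the concentration just established, using that both probabilities are monotone in $\e$, so it suffices to treat small $\e\in(0,\tfrac12)$. For the upper tail I split $\{M_n(t_{c,n})>c^{1+\e}\}$ according to whether the conditional mean or the fluctuation exceeds $c^{1+\e}/2$: the first event has $\limsup_n$-probability at most $\PP(cE\goe c^{1+\e}/2)=e^{-c^{\e}/2}\to0$, and the second is dominated by the concentration bound (whose scale $c^{1/2+\e'}$ lies below $c^{1+\e}/2$ for small $\e'$). The lower tail $\{M_n(t_{c,n})<c^{1-\e}\}$ is symmetric, comparing the conditional mean to $2c^{1-\e}$ and using $\PP(E<2c^{-\e})\to0$. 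I expect essentially all of the difficulty to sit in part (i)---namely, justifying that $\prod_{j}(1-X_j^{-1})\sim e^{-t_{c,n}}$---because this is where the Bolthausen--Sznitman--specific input Lemma~\ref{sum_bs} must replace the law of large numbers (Proposition~\ref{ErgLem2}) that is unavailable here; once that asymptotic is in hand, the remaining moment and Chebyshev arguments are routine.
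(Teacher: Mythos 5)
Your proposal is correct and takes essentially the same route as the paper's own proof: part (i) via Lemma \ref{Lambda} (i), the Taylor expansion as in \eqref{Mac}, Lemma \ref{sum_bs} and Lemma \ref{N}, and part (ii) via conditional Chebyshev with the variance bound of Lemma \ref{Lambda} (ii), followed by weak convergence of the conditional mean and the restriction to $\e<1/2$ for the lower tail. The only cosmetic difference is the upper tail, where the paper applies conditional Markov's inequality to $M_n(t_{c,n})$ directly instead of splitting into a mean-exceedance event and a fluctuation event; both versions use the same ingredients and are equally routine.
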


\begin{proof}
(i) Using the representation from Lemma \ref{Lambda} (i) and a Taylor expansion as in \eqref{Mac}, we get  
\begin{align*}
\E\left[Y_{\rho_{c,n}-1}\left.\right|N_n\right]  \eq X_{\rho_{c,n}-1} \exp{\left(-\sum_{j=1}^{\rho_{c,n}-1}\frac{1}{X_j}+\OO_P\left(X_{\rho_{c,n}-1}^{-1}\right)\right)}
\end{align*}
as $n\to\infty$. 
Recall that the definition of $\rho_{c,n}$ entails $N_n(t_{c,n})=X_{\rho_{c,n}-1}$ and $M_n(t_{c,n})=Y_{\rho_{c,n}-1}$ a.s. 
Thus, we obtain
\begin{align}\label{EW_Y}
\E\left[M_n(t_{c,n})\left.\right|N_n\right]   \eq N_n(t_{c,n}) \exp{\left(-\sum_{j=1}^{\rho_{c,n}-1}\frac{1}{X_j}+\OO_P\left(N_n(t_{c,n})^{-1}\right)\right)}.
\end{align}
From Lemma \ref{sum_bs} and Lemma \ref{N}, it follows
\begin{align*}
\E\left[M_n(t_{c,n})\left.\right|N_n\right]   \eq N_n(t_{c,n}) \exp{\left(-t_{c,n}+\oo_P\left(1\right)\right)}.
\end{align*}
Hence, Lemma \ref{N} implies our claim. 

\bigskip

\enlargethispage{\baselineskip}
(ii) Chebyshev's inequality and Lemma \ref{Lambda} (ii) provide
\begin{align*}
&\PP\left(\left|M_n(t_{c,n})-\E\left[M_n(t_{c,n})\left|N_n\right.\right]\right|>c^{1/2+\e}\right)\\[1ex]
& \hspace{6pc}\eq \E\left[\PP\left(\left|M_n(t_{c,n})-\E\left[M_n(t_{c,n})\left|N_n\right.\right]\right|>c^{1/2+\e}\big|N_n\right)\right]\\[1ex]
&\hspace{6pc}\loe \E\left[\frac{\V\left(M_n(t_{c,n})\left.\right|N_n\right)}{c^{1+2\e}}\wedge 1\right]\\[1ex]
&\hspace{6pc}\loe \E\left[\frac{\E\left(M_n(t_{c,n})\left.\right|N_n\right)}{c^{1+2\e}}\wedge 1\right]. 
\end{align*}
From statement (i) it follows that
\begin{equation*}
\limsup_{n\to\infty}\PP\left(\left|M_n(t_{c,n})-\E\left[M_n(t_{c,n})\left|N_n\right.\right]\right|>c^{1/2+\e}\right) \loe \E\left[\frac{cE}{c^{1+2\e}}\wedge 1\right] \loe c^{-2\e}\,,
\end{equation*}
which entails the first claim. \medskip

Similarly, Markov's inequality yields  
\begin{align*}
\limsup_{n\to\infty}\PP\left(M_n(t_{c,n})> c^{1+\e}\right)\loe \limsup_{n\to\infty}\E\left[\frac{\E\left[M_n(t_{c,n})\left|N_n\right.\right]}{c^{1+\e}}\wedge 1\right] \loe c^{-\e}
\end{align*}
giving the second claim. \medskip

Furthermore, we have
\begin{align*}
\PP\big(M_n(t_{c,n})< c^{1-\e}\big)  \loe \ &  \PP\left(\E\left[M_n(t_{c,n})\left|N_n\right.\right]<2 c^{1-\e} \right) \\[1ex] 
& \hspace{6pc} + \ \PP\left(\left|M_n(t_{c,n})-\E\left[M_n(t_{c,n})\left|N_n\right.\right]\right|> c^{1-\e}\right)
\end{align*}
and, consequently, in view of part (i),
\[\limsup_{n\to\infty}\PP\big(M_n(t_{c,n})< c^{1-\e}\big)  \;\leq\;   \PP\left(E<2c^{-\e} \right)  +\,  \limsup_{n\to\infty}\PP\left(\left|M_n(t_{c,n})-\E\left[M_n(t_{c,n})\left|N_n\right.\right]\right|> c^{1-\e}\right).\] 
The first right-hand term converges to $0$ as $c\to\infty$. Also, as we may assume $\e<1/2$, the second term goes to $0$ in view of the first claim of part (ii). 
\end{proof}

\bigskip

With these preparations, we now turn to the proof of Theorem \ref{bs v2}. 

\medskip

\begin{proof}[Proof of Theorem \ref{bs v2}]
The strategy of this proof resembles that of Theorem \ref{reg_var v2}. However, additional care is required to separate the impact of the parts $\widecheck{T}_i^n$ and $\widehat{T}_i^n$. For this purpose, we consider the functions  
\[g(x_1,\ldots,x_\ell)\ :=\ \exp{\left(i\left(\theta_1 x_1+\cdots+\theta_\ell x_\ell\right)\right)} \qquad \text{ and } \qquad h(x)\ :=\ \exp{\left(i\left(\theta_1+\cdots+\theta_\ell\right)x\right)},\]
where $\theta_i\in\R$ for $1\leq i\leq n$. It is sufficient to prove 
\[\E\left[g\left(\log{\log{(n)}}\left(T_{\left\langle 1\right\rangle}^n-t_n\right),\ldots,\log{\log{(n)}}\left(T_{\left\langle \ell\right\rangle}^n-t_n\right)\right)\right]\ \longrightarrow \  \E\left[g\left(U_1-G,\ldots,U_\ell-G\right)\right]\]
as $n\to\infty$. 
We bound the difference of the terms on both sides. 
Recalling 
\[t_n \eq t_{c,n}+\frac{\log{c}}{\log{\log{n}}},\]
we see that, on the event $\left\{M_n(t_{c,n})\geq\ell\right\}$, it holds 
$T_{\left\langle i\right\rangle}^n\eq\widehat{T}_{\left\langle i\right\rangle}^n+t_{c,n}$ 
and, therefore,
\begin{align}\label{newnew}
\log\log{(n)}\big(T^n_{\left\langle j \right\rangle}-t_n\big)\eq
\left(\log\log{(n)}\,\widehat{T}^n_{\left\langle j \right\rangle}-\log{M_n(t_{c,n})}\right)+\log{\frac{M_n(t_{c,n})}{c}}
\end{align}
for $1\leq j\leq\ell$. 
In conjunction with the independence of $\left(U_1,\ldots,U_\ell\right)$ and the Gumbel random variable $G$, it follows that
\begin{align}\label{split}\nonumber
&\left|\E\left[g\left(\log{\log{(n)}}\left(T_{\left\langle 1\right\rangle}^n-t_n\right),\ldots,\log{\log{(n)}}\left(T_{\left\langle \ell\right\rangle}^n-t_n\right)\right)\right]\, -\, \E\left[g\left(U_1-G,\ldots,U_\ell-G\right)\right]\right| \\[2ex] 
&\qquad\loe
\left|\E\left[g\left(V_{c,n}\right)h\left(\log{\frac{M_n(t_{c,n})}{c}}\right)\right]\, -\, \E\left[g\left(U_1,\ldots,U_\ell\right)\right]\E\left[h\left(-G\right)\right]\right|\\[1ex]\nonumber
&\qquad\quad\qquad + \ 2\,\PP\left(M_n(t_{c,n})<\ell\right),
\end{align}
where, in view of \eqref{newnew}, we now set 
\[V_{c,n}\ :=\ \left(\log\log{(n)}\, \widehat{T}^n_{\left\langle 1 \right\rangle}-\log{M_n(t_{c,n})},\ldots,\log\log{(n)}\, \widehat{T}^n_{\left\langle \ell \right\rangle}-\log{M_n(t_{c,n})}\right).\]
Let us estimate the first term on the right-hand side of \eqref{split}.  
 We have 
\begin{align*}
 \Big|\E\Big[g&\left(V_{c,n}\right)\,h\left(\log{\frac{M_n(t_{c,n})}{c}}\right)\Big]\  -\ \E\left[g\left(U_1,\ldots,U_\ell\right)\right]\E\left[h\left(-G\right)\right]\Big|\\[2ex]
\loe &\  \left|\E\left[g\left(V_{c,n}\right)h\left(\log{\frac{M_n(t_{c,n})}{c}}\right)\right]\ -\ \E\left[g\left(U_1,\ldots,U_\ell\right)\right]\E\left[h\left(\log{\frac{M_n(t_{c,n})}{c}}\right)\right]\right|\\[2ex]
 &\qquad +\ \left|\E\left[ h\left(\log{\frac{M_n(t_{c,n})}{c}}\right)\right]\ -\ \E\left[ h\left(\log{\frac{\E\left[ M_n(t_{c,n})\left|N_n\right.\right]}{c}}\right)\right] \right|\\[2ex]
&\qquad +\ \left|\E\left[ h\left( \log{\frac{\E\left[M_n(t_{c,n})\left|N_n\right.\right]}{c}}\right)\right] \ -\ \E\left[ h\left(-G\right)\right] \right|\\[2ex]
& =: \Delta'_{c,n}+\Delta''_{c,n}+\Delta'''_{c,n} \qquad \text{(say).}
\end{align*}

We bound $\Delta'_{c,n}, \Delta''_{c,n}$ and $\Delta'''_{c,n}$ separately.  
For $\Delta'_{c,n}$, we first consider conditional expectations. For $c>1$, we have, by means of Lemma \ref{lem:key2} (i) in the last step,

\newpage
\begin{align*}
&\bigg|\E\bigg[g\left(V_{c,n}\right)h\left(\log{\frac{M_n(t_{c,n})}{c}}\right)\,\bigg|\,N_n(t_{c,n})\bigg]\ -\ \E\left[g\left(U_1,\ldots,U_\ell\right)\right]\E\left[h\left(\log{\frac{M_n(t_{c,n})}{c}}\right)\,\bigg|\,N_n(t_{c,n})\right]\bigg|\\[2.5ex] 
& \leq  \sum_{\sqrt{c}\leq y\leq c^2}\left|\Big(\E\left[\left.g\left(V_{c,n}\right)\,\right|\,N_n(t_{c,n}),\,M_n(t_{c,n})=y\right] \ -\ \E\left[g\left(U_1,\ldots,U_\ell\right)\right]\Big)h\left(\log{\frac{y}{c}}\right)\right|\\
& \quad\qquad\qquad \cdot\PP\left(M_n(t_{c,n})=y\,\big|\,N_n(t_{c,n})\right) \\[1.5ex] 
&\quad\qquad +\ 2\,\PP\left(M_n(t_{c,n})<\sqrt{c}\,\big|\,N_n(t_{c,n})\right) +\ 2\,\PP\left(M_n(t_{c,n})>c^2\,\big|\,N_n(t_{c,n})\right)\\[2ex]
&   \leq   \max_{\sqrt{c}\leq y\leq c^2}\big|\E\left[\left.g\left(V_{c,n}\right)\,\right|\,N_n(t_{c,n}),\,M_n(t_{c,n})=y\right]\ -\ \E\left[g\left(U_1,\ldots,U_\ell\right)\right]\big|\\[1.5ex]
& \quad \qquad +\ 2\,\PP\left(M_n(t_{c,n})<\sqrt{c}\,|\,N_n(t_{c,n})\right)\ +\ 2\,\PP\left(M_n(t_{c,n})>c^2\,|\,N_n(t_{c,n})\right)\\[2ex]
&   \leq   \max_{\sqrt{c}\leq y\leq c^2}\big|\E\left[g\left(U_{1,y}-\log{y},\ldots,U_{\ell,y}-\log{y}\right)\right]\ -\ \E\left[g\left(U_1,\ldots,U_\ell\right)\right]\big|+\oo_P(1)\\[1.5ex]
& \quad \qquad +\ 2\,\PP\left(M_n(t_{c,n})<\sqrt{c}\,|\,N_n(t_{c,n})\right)\ +\ 2\,\PP\left(M_n(t_{c,n})>c^2\,|\,N_n(t_{c,n})\right)
\end{align*}
as $n\to\infty$. Without loss of generality, we may assume that the right-hand $\oo_P(\cdot)$-term is bounded by 1. Hence, taking expectations, we obtain via dominated convergence 
\begin{align}
\nonumber
\Delta'_{c,n}&  \loe   \max_{\sqrt{c}\leq y\leq c^2}\big|\E\left[g\left(U_{1,y}-\log{y},\ldots,U_{\ell,y}-\log{y}\right)\right]\ -\ \E\left[g\left(U_1,\ldots,U_\ell\right)\right]\big|\ +\ \oo(1)\\[1.5ex]\nonumber
& \quad\qquad \qquad +\ 2\,\PP\left(M_n(t_{c,n})<\sqrt{c})\right)\ +\ 2\,\PP\left(M_n(t_{c,n})>c^2)\right). 
\end{align}
Second, observe that the function $h(\log x)$ is Lipschitz on the interval $[c^{-1/4},\infty)$ with Lipschitz constant $|\theta_1+\cdots+\theta_\ell|c^{1/4}$. Thus,  
\begin{align}\label{Lipschitz}
\nonumber
\Delta''_{c,n} 
\loe &\ \bigg|\E\bigg[h\left(\log{\frac{M_n(t_{c,n})}{c}}\right)\ -\ h\left(\log{\frac{\E\left[ M_n(t_{c,n})\left|N_n\right.\right] }{c}}\right)\,;\,M_{t_{c,n}}\wedge\E\left[ M_n(t_{c,n})\left|N_n\right.\right]\geq c^{3/4}\bigg] \bigg|\\[1.5ex]\nonumber
& \qquad +\ 2\,\PP\left(M_n(t_{c,n})< c^{3/4} \right)\ +\  2\,\PP\left(\E\left[ M_n(t_{c,n})\left|N_n\right.\right] <  c^{3/4} \right)\\[2ex]
 \loe &\  2\,\PP\left(\left|M_n(t_{c,n})\ -\ \E\left[\left.M_n(t_{c,n})\right|N_n\right]\right|>  c^{2/3} \right)\ +\ \left|\theta_1+\cdots+\theta_\ell\right|c^{1/4-1/3} \\[2ex]\nonumber
&\qquad +\ 2\,\PP\left(M_n(t_{c,n}) < c^{3/4} \right)\ +\  2\,\PP\left(\E\left[ M_n(t_{c,n})\left|N_n\right.\right] < c^{3/4} \right).
\end{align}
Last, Lemma \ref{M} (i) provides the convergence of $\Delta'''_{c,n}$ to $0$ as $n\to\infty$. Consequently, combining equation \eqref{split} to \eqref{Lipschitz}, using Lemma \ref{M} and grouping terms yield 
\begin{align*}
&\limsup_{n\to\infty}\bigg|\E\bigg[g\left(V_{c,n}\right)h\left(\log{\frac{M_n(t_{c,n})}{c}}\right)\bigg]\ -\ \E\left[g\left(U_1,\ldots,U_\ell\right)\right]\E\left[h\left(-G\right)\right]\bigg|\\[1.5ex]
& \leq \max_{\sqrt{c}\leq y\leq c^2}\big|\E\left[g\left(U_{1,y}-\log{y},\ldots,U_{\ell,y}-\log{y}\right)\right]\ -\ \E\left[g\left(U_1,\ldots,U_\ell\right)\right]\big|\ \\[1.5ex]
&\qquad + 2\,\limsup_{n\to\infty}\PP\left(M_n(t_{c,n})<\ell\right) + 2\,\limsup_{n\to\infty}\PP\left(M_n(t_{c,n})<\sqrt{c}\right) + 2\,\limsup_{n\to\infty}\PP\left(M_n(t_{c,n})<c^{3/4}\right) \\[1.5ex]
& \qquad \qquad + 2\,\limsup_{n\to\infty}\PP\left(M_n(t_{c,n})>c^2\right) + 2 \limsup_{n\to\infty} \PP\left(\left|M_n(t_{c,n}) - \E\left[\left.M_n(t_{c,n})\right|N_n\right]\right|> c^{2/3}\right)  \\[1.5ex]
&\qquad \qquad\qquad + 2\left(1-e^{-c^{-1/4}}\right)  + \left|\theta_1+\cdots+\theta_\ell\right|c^{-1/12}.
\end{align*}

Finally, taking the limit $c\to\infty$, the right-hand terms converge to $0$ in view of Lemma \ref{lem:key2} (ii) and Lemma \ref{M}.  This finishes the proof. 
\end{proof}

\newpage

\phantomsection

\vfill

\textbf{Acknowledgments.} We are grateful to the anonymous referees for their insightful comments, which allowed us to improve the paper's presentation.


\begin{thebibliography}{99}  

\addcontentsline{toc}{section}{References}   

\bibitem{BGT87} {\sc Bingham, N. H., Goldie, C. M.} and {\sc Teugels, J. L.} (1987). {\em Regular variation}.  Cambridge University Press, Cambridge.

\bibitem{BF05} {\sc Blum, M. G. B.} and {\sc Fran\c{c}ois, O.} (2005). Minimal clade size and external branch length under the neutral coalescent. {\em Adv. in Appl. Probab.} {\bf 37}, 647--662.

\bibitem{BS98} {\sc Bolthausen, E.} and {\sc Sznitman, A.-S.} (1998). On Ruelle's probability cascades and an abstract cavity method. {\em Comm. Math. Phys.} {\bf 197}, 247--276.

\bibitem{BM04} {\sc Boros, G.} and {\sc Moll, V.} (2004). {\em Irresistible Integrals: Symbolics, Analysis and Experiments in the Evaluation of Integrals}. Cambridge University Press, Cambridge.

\bibitem{BDMM07} {\sc Brunet, E., Derrida, B., Mueller, A. H.} and {\sc Munier, S.} (2007). Effect of selection on ancestry: an exactly soluble case and its phenomenological generalization. {\em Phys. Rev. E} {\bf 76}, 041104.

\bibitem{CNKR07} {\sc Caliebe, A., Neininger, R., Krawczak, M.} and {\sc R{\"o}sler, U.} (2007). On the length distribution of external branches in coalescence trees: genetic diversity within species. {\em Theor. Popul. Biol.} {\bf 72}, 245--252.

\bibitem{DKW14} {\sc Dahmer, I., Kersting, G.} and {\sc Wakolbinger, A.} (2014). The total external branch length of Beta-coalescents. {\em Combin. Probab. Comput.} {\bf 23}, 1010--1027.

\bibitem{DWF13} {\sc Desai, M. M., Walczak, A. M.} and {\sc Fisher, D. S.} (2013). Genetic Diversity and the Structure of Genealogies
in Rapidly Adapting Populations. {\em Genetics} {\bf 193},  565--585.

\bibitem{DFS-JY13} {\sc Dhersin, J.-S., Freund, F., Siri-J{\'e}gousse, A.} and {\sc Yuan, L.} (2013). On the length of an external branch in the {B}eta-coalescent. {\em Stochastic Process. Appl.} {\bf 123}, 1691--1715.

\bibitem{DM13} {\sc Dhersin, J.-S.} and {\sc M{\"o}hle, M.} (2013). On the external branches of coalescents with multiple collisions. {\em Electron. J. Probab.} {\bf 18}, 1--11.

\bibitem{DY15} {\sc Dhersin, J.-S.} and {\sc Yuan, L.} (2015). On the total length of external branches for Beta-coalescents. {\em Adv. in Appl. Probab.} {\bf 47}, 693--714.

\bibitem{DK18} {\sc Diehl, C. S.} and {\sc Kersting, G.} (2018). Tree lengths for general ${\Lambda}$-coalescents and the asymptotic site frequency spectrum around the Bolthausen-Sznitman coalescent. To appear in Ann. Appl. Probab. Preprint available at arXiv: \href{https://arxiv.org/abs/1804.00961}{1804.00961}. 

\bibitem{EBBF15} {\sc Eldon, B., Birkner, M., Blath, J.} and {\sc Freund, F.} (2015). Can the Site-Frequency Spectrum Distinguish Exponential Population Growth from Multiple-Merger Coalescents?  {\em Genetics} {\bf 199}, 841--856.

\bibitem{Fell71} {\sc Feller, W.} (1971). {\em An Introduction to Probability Theory and Its Applications}, Vol. 2. John~Wiley \& Sons, New York.

\bibitem{FM09} {\sc Freund, F.} and {\sc M{\"o}hle, M.} (2009). On the time back to the most recent common ancestor and the external branch length of the Bolthausen-Sznitman coalescent.  {\em Markov Process. Related Fields} {\bf 15}, 7387--416.

\bibitem{GM05} {\sc Goldschmidt, C.} and {\sc Martin, J.} (2005). Random Recursive Trees and the Bolthausen-Sznitman Coalescent.  {\em Electron. J. Probab.} {\bf 10}, 718--745.

\bibitem{JK11} {\sc Janson, S.} and {\sc Kersting, G.} (2011). On the total external length of the Kingman coalescent. {\em Electron. J. Probab.} {\bf 16}, 2203--2218.

\bibitem{KPS-J14} {\sc Kersting, G., Pardo, J.,} and {\sc Siri-Jégousse, A.} (2014). Total internal and external lengths of the Bolthausen-Sznitman coalescent. {\em J. Appl. Probab.}, {\bf 51}, 73--86.

\bibitem{KSW14} {\sc Kersting, G., Schweinsberg, J.} and {\sc Wakolbinger, A.} (2014). The evolving beta coalescent. {\em Electron. J. Probab.} {\bf 19}, 1--27.

\bibitem{KSW18} {\sc Kersting, G., Schweinsberg, J.} and {\sc Wakolbinger, A.} (2018). The size of the last merger and time reversal in $\Lambda$-coalescents. {\em Ann. Inst. Henri Poincar\'{e}  Probab. Stat.} {\bf 54}, 1527--1555.

\bibitem{Kim69} {\sc Kimura, M.} (1969). The Number of Heterozygous Nucleotide Sites Maintained in a Finite Population Due to Steady Flux of Mutations. {\em Genetics} {\bf 61}, 893--903.

\bibitem{King82} {\sc Kingman, J. F. C.} (1982). The coalescent. {\em Stochastic Process. Appl.} {\bf 13}, 235--248.

\bibitem{LS06} {\sc Limic, V.} and {\sc Sturm, A.} (2006). The spatial $\Lambda$-coalescent. {\em Electron. J. Probab.} {\bf 11}, 363--393.

\bibitem{Moe10} {\sc M{\"o}hle, M.} (2010). Asymptotic results for coalescent processes without proper frequencies and applications to the two-parameter Poisson-Dirichlet coalescent. {\em Stochastic Process. Appl.} {\bf 120}, 2159--2173.

\bibitem{Moe15} {\sc M{\"o}hle, M.} (2015). The Mittag-Leffler process and a scaling limit for the block counting process of the Bolthausen-Sznitman coalescent. {\em ALEA Lat. Am. J. Probab. Math. Stat.} {\bf 12}, 35--53.

\bibitem{NH13} {\sc Neher, R. A.} and {\sc Hallatschek, O.} (2013). Genealogies of rapidly adapting populations. {\em Proc. Natl. Acad. Sci. USA} {\bf 10}, 437--442.

\bibitem{NNY16} {\sc Niwa, H.-S., Nashida, K.} and {\sc Yanagimoto, T.} (2016). Reproductive skew in Japanese sardine inferred from DNA sequences. {\em ICES J. Mar. Sci.} {\bf 73}, 2181--2189.

\bibitem{Pit99} {\sc Pitman, J.} (1999).  Coalescents with multiple collisions. {\em Ann. Probab.} {\bf 27}, 1870--1902.

\bibitem{Sen73} {\sc Seneta, E.} (1973).  A Tauberian theorem of E. Landau and W. Feller. {\em Ann. Probab.} {\bf 1}, 1057--1058.

\bibitem{Sag99} {\sc Sagitov, S.} (1999).  The general coalescent with asynchronous mergers of ancestral lines.  {\em J. Appl. Probab.} {\bf 36}, 1116--1125.

\bibitem{Schw00} {\sc Schweinsberg, J.} (2000).  A necessary and sufficient condition for the $\Lambda$-coalescent to come down from infinity.
{\em Electron. Commun. Probab.} {\bf 5}, 1--11.

\bibitem{Schw17} {\sc Schweinsberg, J.} (2017). Rigorous results for a population model with selection II: genealogy of the population.
{\em Electron. J. Probab.} {\bf 38}, 1--54.

\bibitem{SY16} {\sc Siri-J{\'e}gousse, A.} and {\sc Yuan, L.} (2016).  Asymptotics of the minimal clade size and related functionals of certain Beta-coalescents.  {\em Acta Appl. Math.} {\bf 142}, 127--148.

\bibitem{SBB13} {\sc Steinr{\"u}cken, M., Birkner, M.} and {\sc Blath, J.} (2013). Analysis of DNA sequence variation within marine species using Beta-coalescents.  {\em Theor. Popul. Biol.} {\bf 87}, 15--24.

\bibitem{VLBRS18} {\sc Villandré, L., Labbe, A., Brenner, B., Roger, M.} and {\sc Stephens, D. A.} (2018). DM-PhyClus: a Bayesian phylogenetic algorithm for infectious disease transmission cluster inference.  {\em BMC Bioinformatics} {\bf 19}: 324.

\bibitem{WNL-CA01} {\sc Wakeley, J., Nielsen, R., Liu-Cordero, S. N.} and {\sc Ardlie, K.} (2001). The Discovery of Single-Nucleotide Polymorphisms---and Inferences about Human Demographic History.  {\em Am. J. Hum. Genet.} {\bf 69}, 1332--1347.

\bibitem{WBWF16} {\sc Wallstrom, T., Bhattacharya, T., Wilkins, J.} and {\sc Fischer, W.} (2016). Generalized Coalescents may be Necessary for Modeling Intrahost HIV
Evolution, presented at {\em 23rd International HIV Dynamics and Evolution, 2016-04-27 (Woods Hole, Massachusetts, United States).} 
\url{https://permalink.lanl.gov/object/tr?what=info:lanl-repo/lareport/LA-UR-16-22791}.

\bibitem{Y14} {\sc Yuan, L.} (2014).  On the measure division construction of $\Lambda$-coalescents.  {\em Markov Process. Related Fields.} {\bf 20}, 229--264.

\end{thebibliography}
\end{document}